\newcommand{\Hmm}[1]{\leavevmode{\marginpar{\tiny%
$\hbox to 0mm{\hspace*{-0.5mm}$\leftarrow$\hss}%
\vcenter{\vrule depth 0.1mm height 0.1mm width \the\marginparwidth}%
\hbox to 0mm{\hss$\rightarrow$\hspace*{-0.5mm}}$\\\relax\raggedright
#1}}}
\numberwithin{equation}{section}
\newtheorem{Thm}{Theorem}[section]
\newtheorem{Cor}{Corollary}[section]
\newtheorem{Lem}{Lemma}[section]
\newtheorem*{Claim}{Claim}
\newtheorem{Pro}{Proposition}[section]
\theoremstyle{definition}
\newtheorem{Def}{Definition}[section]
\theoremstyle{definition}
\newtheorem{Rmk}{Remark}[section]
\newcommand{\bel}[1]{\begin{equation}\label{#1}}
\newcommand{\be}{\begin{equation}}
\newcommand{\ba}{\begin{eqnarray}}
\newcommand{\ea}{\end{eqnarray}}
\newcommand{\qe}{\end{equation}}
\newtheorem{thesis}{Thesis}
\newcommand{\btl}[1]{\begin{thesis}\label{#1}}
\newcommand{\et}{\end{thesis}}
\theoremstyle{theorem}
\newtheorem{satz}{Proposition}[section]
\theoremstyle{corollary}
\theoremstyle{lemma}
\theoremstyle{definition}
\theoremstyle{proof}
\theoremstyle{remark}
\newcommand\blfootnote[1]{%
  \begingroup
  \renewcommand\thefootnote{}\footnote{#1}%
  \addtocounter{footnote}{-1}%
  \endgroup
}
\date{}
\begin{document}

\title[Cheeger constants for signed graphs]{Cheeger constants, structural balance, and spectral clustering analysis for signed graphs}
\author{Fatihcan M. Atay}
\address{Department of Mathematics, Bilkent University, 06800 Ankara, Turkey.}
\email{f.atay@bilkent.edu.tr}

\author{Shiping Liu}
\address{School of Mathematical Sciences, University of Science and Technology of China, 230026 Hefei, China.}
\email{spliu@ustc.edu.cn}
\blfootnote{2010 Mathematics Subject Classification 05C50, 05C22 (05C85, 39A12).}

\begin{abstract}
We introduce a family of multi-way Cheeger-type constants $\{h_k^{\sigma}, k=1,2,\ldots, n\}$ on a signed graph $\Gamma=(G,\sigma)$ such that $h_k^{\sigma}=0$ if and only if $\Gamma$ has $k$ balanced connected components. These constants are switching invariant and bring together in a unified viewpoint a number of important graph-theoretical concepts, including
the classical Cheeger constant, those measures of bipartiteness introduced by Desai-Rao, Trevisan, Bauer-Jost, respectively, on unsigned graphs,
and the frustration index (originally called the line index of balance by Harary) on signed graphs.
We further unify the (higher-order or improved) Cheeger and dual Cheeger inequalities for unsigned graphs as well as the underlying algorithmic proof techniques by establishing their corresponding versions on signed graphs. In particular, we develop a spectral clustering method for finding $k$ almost-balanced subgraphs, each defining a sparse cut. The proper metric for such a clustering is the metric on a real projective space.
We also prove estimates of the extremal eigenvalues of signed Laplace matrix in terms of number of signed triangles ($3$-cycles).
\end{abstract}

\maketitle

\tableofcontents
\section{Introduction}
Given a graph, there are two basic properties often of interest:
  (i) Is it \emph{connected}?
  (ii) Is it \emph{bipartite}?
We know from spectral graph theory that the two properties can be characterized by eigenvalues of the Laplace matrix of the graph.
The second smallest eigenvalue of the Laplace matrix vanishes if and only if the graph is disconnected (recall that the smallest eigenvalue is always zero). A quantitative characterization is given by the \emph{Cheeger inequality}:
the \emph{Cheeger constant} was introduced as a measure of connectedness, and gives upper and lower bounds of the second smallest eigenvalue \cite{Cheeger1970,Dodziuk1984,AM1985,Alon1986, Mohar88, Chung}. (In fact, such constants have been introduced in graph theory earlier by Poly\'{a} and Szeg\"{o} \cite{PS1951}, though without the connection to eigenvalues.) On the other hand, the bipartiteness property turns out to be closely related to the largest eigenvalue of the Laplace matrix. More precisely, the eigenvalues of the normalized Laplace matrix lie in the interval $[0,2]$, and the gap between $2$ and the largest eigenvalue vanishes if and only if the graph is bipartite. A quantitative version of this fact has been studied by several different but closely related approaches: Desai and Rao \cite{DR1994} introduced a \emph{non-bipartiteness parameter}, Trevisan \cite{Trevisan2012} introduced a \emph{bipartiteness ratio} and Bauer and Jost \cite{BJ} introduced a \emph{dual Cheeger constant}, as measures of bipartiteness. They also established the corresponding eigenvalue estimates, which we will refer to as \emph{dual Cheeger inequalities}, following the terminology of Bauer and Jost.

Although the connectedness and bipartiteness are two quite different properties, the proofs of Cheeger and dual Cheeger inequalities share quite a number of comparable ideas. The similarity of the spectral approaches of connectedness and bipartiteness is further observed in the proofs of higher order Cheeger inequalities of Lee, Oveis  Gharan and Trevisan \cite{LOT2013} and the higher order dual Cheeger inequalities of Liu \cite{Liu13}, and in the proofs of improved Cheeger inequalities and
bounds on bipartiteness ratio of Kwok, Lau, Lee, Oveis Gharan and Trevisan \cite{KLLOT2013}.

It is then natural to ask for the reason of such similarity.  In this paper, we present a systematic unified approach to the above-mentioned results in the framework of signed graphs. This clarifies the reason of the similarity and provides a deeper understanding of the relationship between Cheeger and dual Cheeger inequalities.

In fact, we  more generally study the relation between the spectra and the \emph{structural balance theory} of signed graphs, which is interesting in its own right.
Signed graphs and the idea of balance, introduced by Frank Harary \cite{Harary53} in 1953, and rediscovered since then in different contexts many times, are important models and tools for various research fields. The concepts were motivated and suggested by problems in social psychology \cite{Harary53,Harary57,CartHarary56} and have stimulated new methods for analyzing social networks \cite{KLB09,WYWLZ12,SM13}, biological networks \cite{SIMM13}, logical programming \cite{CostantiniProvetti10}, and so on.
Signed graphs also play important roles in various branches of mathematics, such as group theory, root systems (see \cite{CST} and the references therein), topology \cite{Cameron77,CameronWells86}, and even physics \cite{BMRU80}. By relating signed graphs with $2$-lifts of a graph, Bilu and Linial \cite{BL06} reduce the problem of constructing expander graphs to finding a signature with small spectral radius.
In a recent breakthrough, Marcus, Spielman, and Srivastava \cite{MSS, MSSICM} show the existence of infinite families of regular bipartite Ramanujan graphs of every degree larger than $2$, by proving a variant conjecture of Bilu and Linial about the existence of the signature of a given graph with very small spectral radius.

Mathematically, a signed graph $\Gamma=(G, \sigma)$ is an undirected graph $G=(V,E)$ with a signature $\sigma: E\rightarrow \{+1,-1\}$ on the edge set $E$.
One can think of the vertex set $V$ as a social group, where a positive (resp., negative) edge between two vertices indicates that the two members are friends (resp., enemies). The sign of a cycle in $G$ is defined as the product of the signs of all edges in it. A signed graph $\Gamma$ is called \emph{balanced} if all cycles in $G$ are positive. This is a crucial concept
due to Harary \cite{Harary53}, and
plays a central role in our unification of Cheeger and dual Cheeger inequalities in the framework of signed graphs.

The properties of being balanced can be characterized by the eigenvalues of the signed normalized Laplace matrix. More precisely, the smallest eigenvalue of the matrix vanishes if and only if the graph $\Gamma$ has a balanced connected component.
In this paper, we define a Cheeger-type constant $h_1^{\sigma}$ (see Definition \ref{def:signedCheeger}) based on Harary's balance theorem (Theorem \ref{thmHararyBalance}) such that
\begin{equation}\label{fact2}
\Gamma \text{ has a balanced connected component} \iff  h_1^{\sigma}=0.
\end{equation}
In the following, we will refer to this Cheeger-type constant of a signed graph as a \emph{signed Cheeger constant} for short. Similarly, we will also speak of signed inequalities and signed algorithms. We will then prove a signed Cheeger inequality (see Theorem \ref{thmCheegerEsti}), which is an estimate of the smallest eigenvalue from below and above in terms of $h_1^\sigma$. (For previous results in this aspect, see \cite{Belardo14,Hou05}.)

When the signature $\sigma$ on $\Gamma$ is \emph{switching equivalent} (see Definition \ref{def:switching}) to all-positive signature, having a balanced connected component is simply equivalent to the trivial property of having a connected component. When $\sigma$ is switching equivalent to all-negative signature, having a balanced connected component is equivalent to having a bipartite connected component.
Indeed, we show that the signed Cheeger constant $h_1^{\sigma}$ and its multi-way versions (Definition \ref{def:multiwayCheeger}) provide
a common extension of the classical Cheeger constant \cite{Cheeger1970,Dodziuk1984,AM1985,Alon1986, Mohar88, Chung},
 the non-bipartiteness parameter of Desai and Rao \cite{DR1994} (after a modification; see (\ref{eq:modDR})), the bipartiteness ratio of Trevisan \cite{Trevisan2012},
 and the dual Cheeger constant of Bauer and Jost \cite{BJ}.

The introduction of the signed Cheeger constant further enables us to develop a corresponding spectral clustering method on signed networks. We propose an algorithm for finding $k$ \emph{almost-balanced subgraphs} of a signed graph $\Gamma=(G,\sigma)$.
The novel point is that, after embedding the graph into the Euclidean space $\mathbb{R}^k$ via eigenfunctions, we find the proper metric for clustering points is a metric on the real projective space $P^{k-1}\mathbb{R}$ (see (\ref{eq:projmetric})) studied in \cite{Liu13}. Interestingly, this algorithm unifies the traditional spectral clustering for finding $k$ sparse cuts (e.g.~\cite{NJW2002,Luxburg07}) and the recent one for finding $k$ almost-bipartite subgraphs proposed in \cite{Liu13}.

We further explore the related theoretical analysis of this algorithm. Indeed, we unify the higher-order Cheeger \cite{LOT2013}, the higher-order dual Cheeger \cite{Liu13}, and the improved Cheeger inequalities, the improved bounds of bipartiteness ratio \cite{KLLOT2013} of unsigned graphs as higher order signed Cheeger inequalities (Theorem \ref{HigerCheeger}) and improved signed Cheeger inequalities (Theorem \ref{ImprChIntro}), in terms of our signed Cheeger constants.

Harary \cite{Harary57} defined a signed graph $\Gamma=(G,\sigma)$ to be \emph{antibalanced} if its negation $-\Gamma:=(G,-\sigma)$ is balanced.
Thus, $\Gamma$ is antibalanced if and only if every odd cycle in it is negative and every even cycle is positive. It is known that a connected signed graph is antibalanced if and only if  the largest eigenvalue of the signed normalized Laplace matrix equals $2$ (see \cite{LiLi09}). We obtain similar results concerning antibalance and the spectral gap between $2$ and the largest eigenvalue via an antithetical dual signed Cheeger constant (see (\ref{fact3})).


Finally, we prove estimates for the smallest and largest eigenvalues  of the signed normalized Laplace matrix in terms of signed $3$-cycles (we will speak of signed triangles in the following). By definition, the presence of positive (resp., negative) triangles implies that $\Gamma$ cannot be antibalanced (resp., balanced). Therefore, the number of signed triangles relate naturally to the spectral gaps between $0$ and the smallest eigenvalue, and, between $2$ and the largest eigenvalue. We also establish an upper bound estimate for the largest eigenvalue of the signed non-normalized Laplace matrix, in terms of vertex degrees and number of positive triangles. This result
improves the estimate by Hou, Li, and Pan \cite{HouLiPan03}, which is only in terms of vertex degrees.
In fact, our estimate answers the question asked in their paper \cite[remark after Theorem 3.5]{HouLiPan03}.

\section{Preliminaries}
Let $\Gamma=(G,\sigma)$ be a signed graph, where $G=(V,E)$ is an undirected graph and $\sigma: E\to \{+1,-1\}$ is a signature function on the edges. We collect some basic results from structural balance theory and spectral theory of signed graphs in this section.

We first introduce some notation. We say $u,v \in V$ are neighbors when $e=\{u,v\}\in E$, and write $u\sim v$. For ease of notation we write $\sigma(uv):=\sigma(\{u,v\})$ for the sign of an edge.
In addition to the sign, we also assign a positive symmetric weight $w_{uv}$ to every edge $e=\{u,v\}\in E$, and set $w_{uv}=0$ if $e=\{u,v\}\notin E$.
We say the graph is unweighted if $w_{uv}\equiv 1,\,\,\forall\,\, \{u,v\}\in E$. The degree $d_u$ of a vertex $u$ is defined as $d_u=\sum_{v\in V}w_{uv}$.
We will restrict ourselves to signed simple graphs, i.e., the case when the underlying graph $G$ has no self-loops or multi-edges.
We also consider a general positive measure $\mu: V\rightarrow \mathbb{R}_{>0}$ on the vertex set.

\subsection{Harary's balance theorem and bipartition}
Recall that $\Gamma$ is called balanced if all cycles in $G$ is positive. The following structure theorem for balance was proved in \cite{Harary53}.

\begin{Thm}[Harary's Balance Theorem]\label{thmHararyBalance}
 A signed graph $\Gamma$ is balanced if and only if there exists a bipartition of the entire set $V$ into two disjoint subsets $V_1$ and $V_2$ (one of which may be empty) such that each positive edge connects two vertices of the same subset and each negative edge connects two vertices of different subsets.
\end{Thm}

By reversing the signature, Harary gave the antithetical dual result for antibalance. Recall that $\Gamma$ is called antibalanced if every odd cycle is negative and every even cycle is positive.

\begin{Thm}\cite{Harary57} \label{thmAntibalance}
 A signed graph $\Gamma$ is antibalanced if and only if  there exists a bipartition of the entire set $V$ into two disjoint subsets $V_1$ and $V_2$ (one of which may be empty), such that each negative edge connects two vertices of the same subset and each positive edge connects two vertices of different subsets.
\end{Thm}

\subsection{Switching equivalence} In this subsection, we discuss an important operation of signatures on $G$, called switching.

\begin{Def}[Switching]\label{def:switching} The operation of reversing the signs of all edges connecting a subset $S\subseteq V$ and its complement is called  \emph{switching the subset $S$}. Two signatures $\sigma: E\to \{+1,-1\}$ and $\sigma': E\to \{+1,-1\}$ are said to be \emph{switching equivalent} if there exists a subset $S\subseteq V$ such that $\sigma'$ can be obtained from $\sigma$ by switching the subset $S$. We write $\sigma'\approx \sigma$ in this case.
\end{Def}

Switching equivalence is an equivalence relation on signatures of a fixed underlying graph.
We call the corresponding equivalent classes the \emph{switching classes}, and denote the switching class of $\sigma$ by $[\sigma]$.

By definition, the sign of any cycle is preserved by the switching operation. Therefore, we have the following basic property.

\begin{Pro}
Being balanced is a switching invariant property.
\end{Pro}

There is a different but equivalent way to describe switching operations, which is more convenient for our later discussions. A function $\theta: V\rightarrow \{+1, -1\}$ is called a \emph{switching function}.
Switching the signature of $\Gamma=(G, \sigma)$ by $\theta$ refers to the operation of changing $\sigma$ to $\sigma^{\theta}$ via
\begin{equation*}
\sigma^{\theta}(uv):=\theta(u)\sigma(uv)\theta(v), \,\,\forall\,\, \{u,v\}\in E.
\end{equation*}
Equivalently speaking, switching $\sigma$ by $\theta$ means reversing the signs of all edges between the set $V_{\theta}^-:=\{u\in V: \theta(u)=-1\}$ and its complement.
Therefore, this operation is just switching the subset $V_{\theta}^-$ of $V$.
Given $v\in V$, define $\theta_v(u)=-1$ if $u=v$ and $+1$ otherwise.
A vertex switching at $v$, i.e. switching the vertex $v$, means switching $\sigma$ by $\theta_v$.
Note that $\theta=\prod_{v\in V_{\theta}^-}\theta_v$; thus,
switching a subset of $V$ is equivalent to switching every vertex in it one after another.

Zaslavsky \cite{Zaslavsky82} proved the following useful characterization.

\begin{Thm}[Zaslavsky's switching lemma] A signed graph $\Gamma=(G,\sigma)$ is balanced if and only if $\sigma$ is switching equivalent to the all-positive signature, and it is antibalanced if and only if $\sigma$ is switching equivalent to the all-negative signature.
\end{Thm}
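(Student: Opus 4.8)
The plan is to prove the balanced case directly, since the antibalanced case then follows by applying the balanced case to the negation $-\Gamma=(G,-\sigma)$: indeed $-\sigma$ is switching equivalent to $\sigma_+$ if and only if $\sigma$ is switching equivalent to $-\sigma_+=\sigma_-$, and $-\Gamma$ balanced is by definition $\Gamma$ antibalanced. So it suffices to show: $\Gamma$ is balanced $\iff$ $\sigma\approx\sigma_+$.

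The easy direction is ($\Leftarrow$). Switching is a multiplicative operation on edge signs, and the sign of any cycle is a product of edge signs; I would check that each factor $\theta(u)$ along a closed walk appears an even number of times (once for each incident edge of the cycle at $u$), so the cycle sign is switching invariant. Concretely, if $C=v_0v_1\cdots v_\ell v_0$ is a cycle, then $\prod_i\sigma^\theta(v_iv_{i+1})=\prod_i\theta(v_i)\sigma(v_iv_{i+1})\theta(v_{i+1})=\bigl(\prod_i\theta(v_i)^2\bigr)\prod_i\sigma(v_iv_{i+1})=\prod_i\sigma(v_iv_{i+1})$. Since $\sigma_+$ makes every cycle positive, any signature switching-equivalent to $\sigma_+$ also makes every cycle positive, i.e. $\Gamma$ is balanced.

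The substantive direction is ($\Rightarrow$): if $\Gamma$ is balanced, construct a switching function $\theta$ with $\sigma^\theta=\sigma_+$. By Harary's Balance Theorem (Theorem~\ref{thmHararyBalance}), balance yields a bipartition $V=V_1\sqcup V_2$ such that positive edges stay within a part and negative edges cross. Define $\theta(u)=+1$ for $u\in V_1$ and $\theta(u)=-1$ for $u\in V_2$. Then for an edge $\{u,v\}$: if it is positive, $u,v$ lie in the same part, so $\theta(u)\theta(v)=1$ and $\sigma^\theta(uv)=\theta(u)\sigma(uv)\theta(v)=+1$; if it is negative, $u,v$ lie in different parts, so $\theta(u)\theta(v)=-1$ and $\sigma^\theta(uv)=(-1)(-1)=+1$. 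Hence $\sigma^\theta=\sigma_+$, proving $\sigma\approx\sigma_+$. (Alternatively, one can avoid quoting Harary's theorem and argue componentwise: pick a spanning tree of each connected component, switch vertices so that all tree edges become positive — this is possible since a tree has no cycles — and then balance forces every non-tree edge to be positive as well, because the fundamental cycle through it is positive and all its other edges are now positive.)

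I do not expect a serious obstacle here: the forward direction is essentially a restatement of Harary's theorem, and the reverse direction is the switching-invariance of cycle signs, which is the parity computation above. The only point requiring a little care is making sure the two-coloring/tree argument handles graphs that are disconnected or have empty parts, but allowing one of $V_1,V_2$ to be empty (as in the statement of Theorem~\ref{thmHararyBalance}) and working component by component covers these cases cleanly.
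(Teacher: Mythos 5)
Your argument is correct, but note that the paper does not prove this statement at all: it is quoted as a known result of Zaslavsky with a citation to \cite{Zaslavsky82}, so there is no in-paper proof to match. Your proof is the standard one and fits the paper's framework cleanly: the reduction of the antibalanced case to the balanced case via $-\Gamma$ is exactly Harary's antithetical duality, the ($\Leftarrow$) direction is the switching invariance of cycle signs (the parity computation $\prod_i\theta(v_i)^2=1$ along a closed cycle), and the ($\Rightarrow$) direction correctly converts the Harary bipartition of Theorem~\ref{thmHararyBalance} into a switching function $\theta$ with $\sigma^{\theta}=\sigma_+$, with empty parts and disconnected graphs handled as you say. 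The only caveat is circularity of sources rather than of logic: since the paper also quotes Harary's Balance Theorem without proof, your first argument makes Zaslavsky's lemma a corollary of another quoted theorem; your alternative spanning-tree argument (switch so that all tree edges are positive, then balance of fundamental cycles forces non-tree edges positive) is therefore the more self-contained route, and it in fact reproves Harary's theorem as a by-product, since $\sigma^{\theta}=\sigma_+$ yields the bipartition $V_1=\theta^{-1}(+1)$, $V_2=\theta^{-1}(-1)$.
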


For more details and history about switching, we refer to \cite{ZaslavskyMatrices} and the references therein.

\subsection{Laplace matrices and basic spectral theory} In this section, we discuss the spectra of signed graphs, which are in fact switching invariant (see, e.g., \cite{ZaslavskyMatrices}). Let $A^\sigma$ be the signed adjacency matrix. That is, we have for any $u,v\in V$,
$$(A^\sigma)_{uv}=\left\{
                    \begin{array}{ll}
                      \sigma(uv), & \hbox{if $u\sim v$;} \\
                      0, & \hbox{otherwise.}
                    \end{array}
                  \right.
$$

Given a switching function $\theta: V\to \{+1,-1\}$, we denote by $D(\theta)$ the diagonal matrix with $D(\theta)_{uu}=\theta(u), \,\forall \, u\in V$. It is then straightforward to check that
 \begin{equation}
 A^{\sigma^{\theta}}=D(\theta)^{-1}A^{\sigma}D(\theta).
 \end{equation}
Therefore, the spectrum of the matrix $A^\sigma$ is switching invariant.

\begin{Def}[signed Laplace matrices] Let $D$ be the diagonal degree matrix. We call $L^\sigma:=D-A^\sigma$ the \emph{signed non-normalized Laplace matrix}, and $\Delta^\sigma:=I-D^{-1}A^\sigma$ the \emph{signed normalized Laplace matrix}, where $I$ denotes the $|V|\times |V|$ identity matrix.
\end{Def}

\begin{Rmk}
In the literature, the normalized Laplace matrix of an  unsigned graph is defined either as the matrix $I-D^{-\frac{1}{2}}AD^{-\frac{1}{2}}$ \cite{Chung} or $I-D^{-1}A$ \cite{BJ}, where $A$ is the unsigned adjacency matrix. The latter one is also called the random walk Laplacian. The two matrices are unitarily equivalent and hence share the same spectrum. The signed normalized Laplace matrix $\Delta^\sigma$ we use here is an extension of the latter matrix. it has the same spectrum as the matrix $I-D^{-\frac{1}{2}}A^\sigma D^{-\frac{1}{2}}$. The matrix $\Delta^{\sigma}$ also appears naturally in the context of graph drawing and electrical networks \cite{KSLLDA}.
\end{Rmk}

The following property follows directly from our discussion of the spectrum of $A^\sigma$.
 \begin{Pro}[\cite{ZaslavskyMatrices}]\label{satz:spectrumswitchinginvariant}
 The spectrum of $\Delta^{\sigma}$ or $L^{\sigma}$ for a signed graph $\Gamma=(G, \sigma)$ is switching invariant.
 \end{Pro}

It is well-known that the eigenvalues of $\Delta^{\sigma}$ lie in the interval $[0,2]$. We can list them (counting with multiplicity) as
\begin{equation*}
0\leq \lambda_1(\Delta^{\sigma})\leq \lambda_2(\Delta^{\sigma})\leq \cdots \leq \lambda_n(\Delta^{\sigma})\leq 2,
\end{equation*}
where $n$ is the cardinality of $V$. Moreover, the cases when $0$ and $2$ are eigenvalues are characterized as below; see, e.g., \cite{Zaslavsky82,HouLiPan03,LiLi09}.
\begin{align}
\Gamma \text{ has a balanced connected component } &\Leftrightarrow\,\, \lambda_1(\Delta^{\sigma})=0,\label{fact1}\\
\Gamma \text{ has an antibalanced connected component } &\Leftrightarrow\,\, \lambda_n(\Delta^{\sigma})=2.
\end{align}

Let $\mu_d$ denote the degree measure on $V$, i.e. $\mu_d(u)=d_u$, $\forall u\in V$.
The operator form of $\Delta^{\sigma}$ can be expressed by its action on any function $f: V\rightarrow \mathbb{R}$ and any $u\in V$ as
\begin{equation}\label{LaplaceDef}
\Delta^{\sigma} f(u)=\frac{1}{\mu_d(u)}\sum_{v, v\sim u}w_{uv}(f(u)-\sigma(uv)f(v)).
\end{equation}
Replacing $\mu_d$ above by the constant measure $\mu_1\equiv 1$ yields the operator form for $L^{\sigma}$.
For a general measure $\mu$, we denote the corresponding inner product of two functions $f,g: V\rightarrow \mathbb{R}$ by
$$(f,g)_{\mu}=\sum_{u\in V}\mu(u)f(u)g(u).$$
The signed Rayleigh quotient of a map $\Phi: V\rightarrow \mathbb{R}^k$ is given by
\begin{equation}
\mathcal{R}^{\sigma}(\Phi)=\frac{\sum_{u\sim v}w_{uv}\Vert\Phi(u)-\sigma(uv)\Phi(v)\Vert^2}{\sum_{u\in V}\mu(u)\Vert\Phi(u)\Vert^2}.
\end{equation}
We also define a dual version of the Rayleigh quotient of $\Phi$ by
\begin{equation}
\widetilde{\mathcal{R}}^{\sigma}(\Phi)=\frac{\sum_{u\sim v}w_{uv}\Vert\Phi(u)+\sigma(uv)\Phi(v)\Vert^2}{\sum_{u\in V}\mu(u)\Vert\Phi(u)\Vert^2}.
\end{equation}
The Courant-Fisher-Weyl min-max principle says that the $k$-th eigenvalue $\lambda_k$ of $\Delta^{\sigma}$ (or $L^{\sigma}$) satisfies
\begin{equation}\label{keigenvalue}
\lambda_k=\min_{\substack{f_1,f_2,\ldots, f_k\not\equiv 0\\(f_i,f_j)_{\mu}=0, \forall i\neq j}} \;\; \max_{\substack{f\not\equiv 0\\f\in \text{span}\{f_1, f_2,\ldots, f_k\}}}
\mathcal{R}^{\sigma}(f).
\end{equation}
In particular, we have
\begin{equation}\label{varForleatLar}
\lambda_1(\Delta^{\sigma})=\min_{f\not\equiv 0}\mathcal{R}^{\sigma}(f), \text{ and } 2-\lambda_N(\Delta^{\sigma})=\min_{f\not\equiv 0}\widetilde{\mathcal{R}}^{\sigma}(f).
\end{equation}

\begin{Lem}\label{lemma:dual}
For any $1\leq k\leq n$, it holds that $$2-\lambda_{N-k+1}(\Delta^{\sigma})=\lambda_k(\Delta^{-\sigma}).$$
\end{Lem}

This follows immediately from the fact that $\widetilde{\mathcal{R}}^{\sigma}(f)=\mathcal{R}^{-\sigma}(f)$.
The support of a map $\Phi:V\to\mathbb{R}$ is defined as
\begin{equation*}
\text{supp}(\Phi):=\{u\in V: \Phi(u)\neq 0\}.
\end{equation*}
By (\ref{keigenvalue}), one can derive the following lemma (see, e.g., \cite{KLLOT2013}).

\begin{Lem}\label{lemmaPreliminary}
For any $k$ disjointly supported functions $f_1, f_2, \ldots, f_k: V\rightarrow \mathbb{R}$,
\begin{equation}
\lambda_k\leq 2\max_{1\leq i\leq k}\mathcal{R}^{\sigma}(f_i).
\end{equation}
\end{Lem}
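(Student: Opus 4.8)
The plan is to feed the functions $f_1,\dots,f_k$ themselves into the Courant--Fischer--Weyl min-max formula \eqref{keigenvalue}. The first point to check is that disjointly supported functions are automatically pairwise $(\cdot,\cdot)_{\mu}$-orthogonal: at each vertex $u$ at most one of the values $f_i(u)$ is nonzero, so $(f_i,f_j)_{\mu}=\sum_{u}\mu(u)f_i(u)f_j(u)=0$ whenever $i\neq j$. Hence $(f_1,\dots,f_k)$ is an admissible $k$-tuple in \eqref{keigenvalue}, and it then suffices to show $\mathcal{R}^{\sigma}(f)\le 2\max_{1\le i\le k}\mathcal{R}^{\sigma}(f_i)$ for every nonzero $f=\sum_{i=1}^k c_i f_i$ in their span.

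For the denominator of $\mathcal{R}^{\sigma}(f)$, disjointness of supports kills the cross terms: $\sum_{u}\mu(u)f(u)^2=\sum_{i}c_i^2\sum_{u}\mu(u)f_i(u)^2$. For the numerator, linearity gives $f(u)-\sigma(uv)f(v)=\sum_i c_i\bigl(f_i(u)-\sigma(uv)f_i(v)\bigr)$, and here is the crucial observation: for a fixed edge $\{u,v\}$ the summand $f_i(u)-\sigma(uv)f_i(v)$ vanishes unless $u\in\mathrm{supp}(f_i)$ or $v\in\mathrm{supp}(f_i)$, and since the supports are disjoint there is at most one index with $u$ in its support and at most one with $v$ in its support. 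So at most two indices contribute to the sum on any given edge, whence $\bigl(\sum_i c_i(f_i(u)-\sigma(uv)f_i(v))\bigr)^2\le 2\sum_i c_i^2\bigl(f_i(u)-\sigma(uv)f_i(v)\bigr)^2$ by $(a+b)^2\le 2(a^2+b^2)$. Summing over edges with weights $w_{uv}$ gives $\sum_{u\sim v}w_{uv}\bigl(f(u)-\sigma(uv)f(v)\bigr)^2\le 2\sum_i c_i^2\sum_{u\sim v}w_{uv}\bigl(f_i(u)-\sigma(uv)f_i(v)\bigr)^2$.

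Putting the two estimates together and invoking the mediant inequality $\frac{\sum_i a_i}{\sum_i b_i}\le\max_i\frac{a_i}{b_i}$ (with $b_i=c_i^2\sum_u\mu(u)f_i(u)^2>0$, the sums and maxima taken over indices with $c_i\neq 0$) yields $\mathcal{R}^{\sigma}(f)\le 2\max_i\mathcal{R}^{\sigma}(f_i)$. Taking the maximum over $f$ in the span and applying \eqref{keigenvalue} gives $\lambda_k\le 2\max_{1\le i\le k}\mathcal{R}^{\sigma}(f_i)$; the argument uses only the Rayleigh quotient and min-max principle, so it applies verbatim to both $\Delta^{\sigma}$ and $L^{\sigma}$. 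The only real subtlety is the factor $2$ rather than $k$ in the numerator bound: it hinges entirely on the remark that, although $f$ mixes all $k$ functions globally, each single edge "sees" at most two of them because the supports are disjoint. Minor care is also needed for degenerate cases (an edge with $f$ vanishing on one or both endpoints, or some $c_i=0$), which are handled by restricting the sums and maxima to the indices with $c_i\neq0$.
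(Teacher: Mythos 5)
Your proof is correct and follows the same route the paper intends: the paper simply invokes the min-max principle \eqref{keigenvalue} and cites \cite{KLLOT2013}, and your argument (orthogonality from disjoint supports, the at-most-two-indices-per-edge observation giving the factor $2$, and the mediant inequality) is precisely the standard derivation behind that citation.
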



We refer to \cite{Hou05,LiLi08,GHZ11,AT14,Belardo14,Reff} for more results in the spectral theory of signed graphs.

\section{(Multi-way) Signed Cheeger constants}


In this section, we define the signed Cheeger constant $h_1^\sigma$ mentioned in the introduction and prove that $h_1^\sigma$ is switching invariant. We relate $h_1^\sigma$ to several existing graph-theoretic concepts. We also define the corresponding multi-way signed Cheeger constants.

\subsection{Definition of the signed Cheeger constant}
We first define some notation.
Given two subsets  $V_1, V_2$ of $V$, denote the set of edges lying between $V_1$ and $V_2$ by
$$E(V_1,V_2):=\{\{u,v\}\in E: u\in V_1, v\in V_2\}.$$
We distinguish the set of positive edges and the set of negative edges by
$$E^+(V_1,V_2):=\{\{u,v\}\in E: u\in V_1, v\in V_2, \sigma(uv)=+1\},$$
and
$$E^-(V_1,V_2):=\{\{u,v\}\in E: u\in V_1, v\in V_2, \sigma(uv)=-1\}.$$
We further define their weighted cardinality as
\begin{align*}
&|E(V_1, V_2)|=\sum_{u\in V_1}\; \sum_{v\in V_2, \{u,v\}\in E(V_1,V_2)}w_{uv}, \\
&|E^+(V_1, V_2)|=\sum_{u\in V_1}\; \sum_{v\in V_2, \{u,v\}\in E^+(V_1,V_2)}w_{uv},\\
&|E^-(V_1, V_2)|=\sum_{u\in V_1}\; \sum_{v\in V_2, \{u,v\}\in E^-(V_1,V_2)}w_{uv}.
\end{align*}
When $V_1=V_2$, we write $$|E(V_1)|:=|E(V_1,V_1)|$$ and $$|E^+(V_1)|:=|E^+(V_1,V_1)|, \,\,\,|E^-(V_1)|:=|E^-(V_1,V_1)|$$ for short.
Keep in mind that, in this case, the weighted cardinality defined above counts every edge weight twice. We adopt this definition of weighted cardinality for the convenience of later discussions.

For a subset $S\subseteq V$, we define its volume as $$\mathrm{vol}_{\mu}(S)=\sum_{u\in S}\mu(u).$$


Let $(V_1, V_2)$ denote a \emph{sub-bipartition} of $V$, that is, $V_1$ and $V_2$ are subsets of $V$ satisfying $\emptyset\neq V_1\cup V_2\subseteq V$ and $V_1\cap V_2=\emptyset$.
We define the \emph{signed bipartiteness ratio} of $(V_1, V_2)$ as
\begin{equation}
\beta^{\sigma}(V_1, V_2)=\frac{2|E^{+}(V_1, V_2)|+|E^-(V_1)|+|E^-(V_2)|+|E(V_1\cup V_2, \overline{V_1\cup V_2})|}{\mathrm{vol}_{\mu}(V_1\cup V_2)},
\end{equation}
where $\overline{V_1\cup V_2}$ is the complement of $V_1\cup V_2$ in $V$.

Observe that $\beta^\sigma(V_1,V_2)$ can be considered as a modification of the expansion
$$\frac{|E(V_1\cup V_2, \overline{V_1\cup V_2})|}{\mathrm{vol}_{\mu}(V_1\cup V_2)}$$
of the set $V_1\cup V_2$. If the additional quantity in the numerator
$$2|E^{+}(V_1, V_2)|+|E^-(V_1)|+|E^-(V_2)|$$
 vanishes, then, by Harary's balanced theorem (Theorem \ref{thmHararyBalance}), we conclude that the induced signed subgraph of $V_1\cup V_2$ in $\Gamma=(G,\sigma)$ is balanced. Moreover, $V_1$ and $V_2$ give a partition of the induced subgraph satisfying the properties described in Theorem \ref{thmHararyBalance}.

%
%

\begin{Def}[Signed Cheeger constant]\label{def:signedCheeger}
For a signed graph $\Gamma=(G, \sigma)$,  \emph{the signed Cheeger constant} $h_1^{\sigma}(\mu)$ is defined as
\begin{equation}\label{defSCheeger}
 h_1^{\sigma}(\mu)=\min_{(V_1, V_2)}\beta^{\sigma}(V_1, V_2),
\end{equation}
 where the minimum is taken over all possible sub-bipartitions of $V$.
\end{Def}

With this definition, we observe directly that $h_1^\sigma(\mu)=0$ if and only if there exists a sub-bipartition $(V_1,V_2)$ of $V$ with $\beta^\sigma(V_1,V_2)=0$. From the above discussion about the signed bipartiteness ratio, we have that $h_1^\sigma(\mu)=0$ if and only if $\Gamma$ has a balanced connected component. That is, the statement (\ref{fact2}) mentioned in the introduction holds.

\subsection{Switching invariant property}
We prove that $h_1^{\sigma}(\mu)$ is switching invariant.

\begin{satz}\label{prop:switching invariant}
Let $\Gamma=(G,\sigma)$ be a signed graph. For any switching function $\theta:V\rightarrow \{+1,-1\}$,
\begin{equation}
h^{\sigma}_1(\mu)=h^{\sigma^{\theta}}_1(\mu).
\end{equation}
\end{satz}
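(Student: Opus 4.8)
The plan is to show that switching by $\theta$ induces a bijection on sub-bipartitions of $V$ that preserves the signed bipartiteness ratio $\beta^{\sigma}$, from which the equality of the minima is immediate. The natural candidate bijection is dictated by Harary's theorem: switching by $\theta$ should correspond to moving the vertices of $V_\theta^- = \{u : \theta(u) = -1\}$ from one side of the bipartition to the other. Concretely, given a sub-bipartition $(V_1, V_2)$ for the signature $\sigma^\theta$, I would associate to it the sub-bipartition $(V_1', V_2')$ for $\sigma$ defined by $V_1' = (V_1 \setminus V_\theta^-) \cup (V_2 \cap V_\theta^-)$ and $V_2' = (V_2 \setminus V_\theta^-) \cup (V_1 \cap V_\theta^-)$; equivalently, a vertex keeps its side if $\theta = +1$ and swaps sides if $\theta = -1$, and vertices outside $V_1 \cup V_2$ stay outside. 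This is clearly an involution-type bijection (applying the $\theta$-swap twice is the identity), so it suffices to check $\beta^{\sigma^\theta}(V_1, V_2) = \beta^{\sigma}(V_1', V_2')$.

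First I would observe that the denominators agree: $\mathrm{vol}_\mu(V_1 \cup V_2) = \mathrm{vol}_\mu(V_1' \cup V_2')$, since $V_1' \cup V_2' = V_1 \cup V_2$ as sets (the swap only redistributes vertices between the two parts, it does not change their union), and $\mu$ does not depend on the signature. So the whole content is in matching the numerators. Writing the numerator of $\beta^\sigma(V_1,V_2)$ as
\[
2|E^{+}(V_1, V_2)| + |E^{-}(V_1)| + |E^{-}(V_2)| + |E(V_1 \cup V_2, \overline{V_1 \cup V_2})|,
\]
I would handle the terms one group at a time. The cross-boundary term $|E(V_1 \cup V_2, \overline{V_1\cup V_2})|$ ignores signs entirely and depends only on the set $V_1 \cup V_2$, which is unchanged, so it is trivially invariant. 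For the remaining three terms, the key point is a local bookkeeping lemma for a single edge $\{u,v\}$ with both endpoints in $V_1 \cup V_2$: one tracks, as a function of which sides $u$ and $v$ lie on and of the sign $\sigma(uv)$ versus $\sigma^\theta(uv) = \theta(u)\sigma(uv)\theta(v)$, whether the edge contributes to the ``bad'' count (a positive edge across $V_1$–$V_2$, or a negative edge inside $V_1$ or inside $V_2$) or not. A case analysis on the four sign patterns of $(\theta(u),\theta(v))$ shows that an edge is bad for $(\Gamma^\theta; V_1,V_2)$ exactly when it is bad for $(\Gamma; V_1', V_2')$: flipping $\theta$ at an endpoint simultaneously flips that endpoint's side (via the bijection) and flips the relevant edge sign, and these two flips cancel in terms of the badness predicate. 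Summing the edge contributions with their weights $w_{uv}$ recovers the equality of numerators.

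The main obstacle is purely the organization of this case analysis so that it is airtight rather than tedious — in particular being careful that the weight of each internal edge is counted consistently (the excerpt warns that in $|E^\pm(V_1)|$ every edge weight is counted twice, and the factor $2$ in front of $|E^+(V_1,V_2)|$ is there precisely to make the "badness = total boundary weight of the bad edge set within $V_1\cup V_2$" interpretation uniform). Once the per-edge invariance of badness is established, summing gives numerator invariance, hence $\beta^{\sigma^\theta}(V_1,V_2) = \beta^\sigma(V_1',V_2')$ for every sub-bipartition; taking the minimum over all sub-bipartitions on both sides and using that the correspondence $(V_1,V_2)\mapsto(V_1',V_2')$ is a bijection yields $h_1^{\sigma^\theta}(\mu) = h_1^\sigma(\mu)$. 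An alternative, slicker route I would mention is to bypass the combinatorics: express $h_1^\sigma(\mu)$ via the Rayleigh-type characterization hinted at by $\mathcal{R}^\sigma$ and the identity $A^{\sigma^\theta} = D(\theta)^{-1} A^\sigma D(\theta)$, noting that precomposition of test maps $\Phi \colon V \to \mathbb{R}$ with the sign change $u \mapsto \theta(u)\Phi(u)$ transports $\beta^{\sigma}$ to $\beta^{\sigma^\theta}$; but since the Rayleigh-quotient form of $h_1^\sigma$ is not yet proven at this point in the paper, the direct combinatorial argument above is the cleaner choice here.
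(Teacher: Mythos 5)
Your proposal is correct and is essentially the paper's argument: both rest on the same key fact (the paper's Lemma~\ref{lemma:switchMove}) that switching by $\theta$ admits a $\beta$-preserving correspondence $(V_1,V_2)\mapsto(V_1',V_2')$ with $V_1'\cup V_2'=V_1\cup V_2$, obtained by having the vertices of $V_\theta^-$ swap sides, after which taking minima gives the proposition. The only difference is organizational: the paper factors $\theta$ into single-vertex switchings and verifies invariance by an explicit difference computation at the moved vertex, whereas you perform the swap globally and check edge by edge that the badness predicate is preserved (with side indicator $s$, one has $\sigma(uv)\,s'(u)s'(v)=\sigma^{\theta}(uv)\,s(u)s(v)$), which is an equally valid, and arguably tidier, bookkeeping of the same cancellation.
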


This property is a direct corollary of the following lemma.
\begin{Lem}\label{lemma:switchMove}
For any switching function $\theta:V\rightarrow \{+1,-1\}$ and any sub-bipartition $(V_1, V_2)$, there exists a sub-bipartition $(V_1', V_2')$, such that $V_1'\cup V_2'=V_1\cup V_2$, and
\begin{equation}\label{betalemma}
\beta^{\sigma^{\theta}}(V_1', V_2')=\beta^{\sigma}(V_1, V_2).
\end{equation}
Moreover, when $\sigma^{\theta}$ can be obtained from $\sigma$ via switching a subset $S\subseteq~V_1$, we have
\begin{equation}\label{betalemma2}
\beta^{\sigma^{\theta}}(V_1\setminus S, V_2\cup S)=\beta^{\sigma}(V_1, V_2).
\end{equation}
\end{Lem}

\begin{proof}
We only need to prove the lemma for a vertex switching at $u\in V$, that is, a switching of $\sigma$ by $\theta_u$. (Recall that $\theta_u(v)=-1$
if $v=u$ and $+1$ otherwise, and $\sigma^{\theta_u}(uv)=-\sigma(uv)$ for all $v\sim u$.)

Now, if $u\in \overline{V_1\cup V_2}$, the vertex switching at $u$ does not change the signed bipartiteness ratio; hence $V_1':=V_1$ and $V_2':=V_2$
satisfy the equality (\ref{betalemma}).

If $u\in V_1\cup V_2$, suppose without loss of generality that $u\in V_1$. After the vertex switching at $u$, we have
\begin{align*}
&(\beta^{\sigma^{\theta_u}}(V_1, V_2)-\beta^{\sigma}(V_1, V_2))\,\mathrm{vol}_{\mu}(V_1\cup V_2)\\
&=2\sum_{\substack{v\in V_2\\ \sigma(uv)=-1}}w_{uv}-2\sum_{\substack{v\in V_2\\ \sigma(uv)=+1}}w_{uv}+2\sum_{\substack{v\in V_1\\ \sigma(uv)=+1}}w_{uv}-2\sum_{\substack{v\in V_1\\ \sigma(uv)=-1}}w_{uv}.
\end{align*}
In this case, we set $V_1':=V_1\setminus \{u\}$ and $V_2':=V_2\cup \{u\}$, that is, we move the vertex $u$ from $V_1$ into $V_2$. Then we observe that
\begin{align*}
&(\beta^{\sigma^{\theta_u}}(V_1', V_2')-\beta^{\sigma^{\theta_u}}(V_1, V_2))\mathrm{vol}_{\mu}(V_1'\cup V_2')\\
&=-2\sum_{\substack{v\in V_1\\ \sigma^{\theta_u}(uv)=-1}}w_{uv}+2\sum_{\substack{v\in V_1\\ \sigma^{\theta_u}(uv)=+1}}w_{uv}-2\sum_{\substack{v\in V_2\\ \sigma^{\theta_u}(uv)=+1}}w_{uv}+2\sum_{\substack{v\in V_2\\ \sigma^{\theta_u}(uv)=-1}}w_{uv}.
\end{align*}
Adding the two equalities above, we arrive at (\ref{betalemma}).
\end{proof}

\subsection{Relations to other graph-theoretic concepts}
We use Proposition \ref{prop:switching invariant} to explain the relations of the signed Cheeger constant to several other graph-theoretic concepts.

We denote by $\sigma_+$ (resp., $\sigma_-$) the all-positive (resp., all-negative) signature. We consider the signed Cheeger constant of two particular classes of signatures: (i) $\sigma\in [\sigma_-]$, and (ii) $\sigma\in [\sigma_+]$.

Recall that the \emph{bipartiteness ratio }of Trevisan \cite{Trevisan2012} is defined as
\begin{align*}
\beta:=\min_{(V_1, V_2)}\frac{2|E(V_1, V_2)|+|E(V_1)|+|E(V_2)|+|E(V_1\cup V_2, \overline{V_1\cup V_2})|}{\mathrm{vol}_{\mu}(V_1\cup V_2)}.
\end{align*}
Furthermore, Bauer and Jost \cite{BJ} introduced the \emph{dual Cheeger constant}
$$\overline{h}:=\max_{(V_1,V_2)}\frac{2|E(V_1,V_2)|}{\mathrm{vol}_{\mu}(V_1\cup V_2)}.$$
These two concepts are closely related, because, due to the fact that
$$\mathrm{vol}_{\mu}(V_1\cup V_2)=|E(V_1)|+|E(V_2)|+|E(V_1\cup V_2, \overline{V_1\cup V_2})|,$$
one has
$$\beta=1-\overline{h}.$$
Observe that, by definition,
$$h_1^{\sigma_-}(\mu)=\beta=1-\overline{h}.$$
Therefore, we obtain the following fact by Proposition \ref{prop:switching invariant}:
If $\sigma\in[\sigma_-]$, that is, if $\Gamma=(G, \sigma)$ is antibalanced, then $h_1^{\sigma}(\mu)=\beta=1-\overline{h}$.

We proceed to consider $h_1^\sigma(\mu)$ when $\sigma\in [\sigma_+]$. To this end, we first prepare the following reformulation of the signed Cheeger constant. Recall that the expansion (or conductance) of a subset $S\subseteq V$ is defined as
\begin{equation}
 \rho(S):=\frac{|E(S, \overline{S})|}{\mathrm{vol}_{\mu}(S)}.
\end{equation}
We define a \emph{signed expansion} of $S\subseteq V$ in $\Gamma$ to be
\begin{equation}
 \rho^{\sigma}(S):=\frac{|E^-(S)|+|E(S, \overline{S})|}{\mathrm{vol}_{\mu}(S)}.
\end{equation}
We have the following relation between $h_1^{\sigma}(\mu)$ and signed expansions.

\begin{Cor}\label{cor:signed expansion}
Let $\Gamma=(G,\sigma)$ be a signed graph. Then,
\begin{equation}\label{eq:signed expansion}
h^{\sigma}_1(\mu)=\min_{\sigma'\in [\sigma]}\; \min_{\emptyset\neq S\subseteq V}\rho^{\sigma'}(S).
\end{equation}
\end{Cor}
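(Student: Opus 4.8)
The plan is to prove the identity~\eqref{eq:signed expansion} by a two-way inequality, using Lemma~\ref{lemma:switchMove} to reduce the left-hand side to a convenient switched representative, and then matching sub-bipartitions $(V_1,V_2)$ with subsets $S$. The key observation is that a sub-bipartition $(V_1,V_2)$ of $V$ is the same data as a choice of a subset $S:=V_1\cup V_2$ together with a switching function $\theta$ on $S$ (namely $\theta\equiv +1$ on $V_1$ and $\theta\equiv -1$ on $V_2$), and that under this correspondence the quantity $2|E^+(V_1,V_2)|+|E^-(V_1)|+|E^-(V_2)|$ counts exactly twice the negative edges inside $S$ after switching $V_2$.

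First I would prove $h_1^{\sigma}(\mu)\geq \min_{\sigma'\in[\sigma]}\min_{\emptyset\neq S\subseteq V}\rho^{\sigma'}(S)$. Take any sub-bipartition $(V_1,V_2)$ and set $S=V_1\cup V_2$. Let $\theta$ be the switching function equal to $+1$ on $\overline{V_2}$ and $-1$ on $V_2$; write $\sigma'=\sigma^{\theta}$. A direct check of the definition of $\sigma^{\theta}$ on the edges inside $S$ shows that an edge $\{u,v\}$ with $u,v\in V_1$ or $u,v\in V_2$ keeps its sign, while an edge with one endpoint in $V_1$ and the other in $V_2$ reverses its sign. Hence an edge inside $S$ is negative for $\sigma'$ precisely when it is a positive edge between $V_1$ and $V_2$, or a negative edge inside $V_1$, or a negative edge inside $V_2$; summing the weights (recall the doubling convention for $|E^{\pm}(S)|$) gives $|E^-(S)|(\sigma')=2|E^+(V_1,V_2)|+|E^-(V_1)|+|E^-(V_2)|$. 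Since switching does not change the cut $|E(S,\overline S)|=|E(V_1\cup V_2,\overline{V_1\cup V_2})|$ nor $\mathrm{vol}_\mu(S)$, we get $\beta^{\sigma}(V_1,V_2)=\rho^{\sigma'}(S)\geq \min_{\sigma'\in[\sigma]}\min_{S}\rho^{\sigma'}(S)$; taking the minimum over $(V_1,V_2)$ yields the claim.

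For the reverse inequality, fix $\sigma'\in[\sigma]$ and $\emptyset\neq S\subseteq V$. By Lemma~\ref{lemma:switchMove} (applied repeatedly, or its consequence Proposition~\ref{prop:switching invariant}) it suffices to exhibit, for the signature $\sigma'$, a sub-bipartition whose ratio is $\rho^{\sigma'}(S)$, since $h_1^{\sigma}(\mu)=h_1^{\sigma'}(\mu)$. Define $V_1=S$ and $V_2=\emptyset$: then $|E^+(V_1,V_2)|=|E^-(V_2)|=0$, $|E^-(V_1)|=|E^-(S)|(\sigma')$, and $|E(V_1\cup V_2,\overline{V_1\cup V_2})|=|E(S,\overline S)|$, so $\beta^{\sigma'}(V_1,V_2)=\rho^{\sigma'}(S)$ exactly. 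Hence $h_1^{\sigma}(\mu)=h_1^{\sigma'}(\mu)\leq \rho^{\sigma'}(S)$; minimizing over $\sigma'$ and $S$ gives $h_1^{\sigma}(\mu)\leq \min_{\sigma'\in[\sigma]}\min_{S}\rho^{\sigma'}(S)$, and the two inequalities combine to the desired equality.

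The only real subtlety is bookkeeping: one must keep careful track of the factor of $2$ coming from the convention that edges inside a set are counted twice in $|E^{\pm}(S)|$, and make sure the case $V_2=\emptyset$ is admissible (it is, since the definition only requires $\emptyset\neq V_1\cup V_2\subseteq V$). Everything else is a routine edge-by-edge verification that switching $V_2$ transforms $\beta^{\sigma}(V_1,V_2)$ into the signed expansion $\rho^{\sigma^{\theta}}(V_1\cup V_2)$.
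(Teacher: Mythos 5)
Your proposal is correct and takes essentially the same route as the paper: both rest on the correspondence, given a sub-bipartition $(V_1,V_2)$, between $\beta^{\sigma}(V_1,V_2)$ and the signed expansion of $S=V_1\cup V_2$ under the signature obtained by switching one of the two parts (you switch $V_2$, the paper switches $V_1$), and your bookkeeping---the factor $2$ in $|E^-(S)|(\sigma^{\theta})=2|E^+(V_1,V_2)|+|E^-(V_1)|+|E^-(V_2)|$ and the admissibility of the sub-bipartition $(S,\emptyset)$---is accurate. The only difference is organizational: you verify the pointwise identity $\beta^{\sigma}(V_1,V_2)=\rho^{\sigma^{\theta}}(V_1\cup V_2)$ by a direct edge count and invoke Proposition~\ref{prop:switching invariant} for the reverse inequality, while the paper fixes $S$, works with the optimal bipartition of $S$, and uses Lemma~\ref{lemma:switchMove} together with a short contradiction argument to get the per-$S$ equality (\ref{eq:bipartANDexpansion}), which it reuses later.
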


\begin{proof}
We denote by $(V_1, V_2)_S$ a bipartition of $S$, i.e., $V_1\cup V_2=S$, $V_1\cap V_2=\emptyset$.
We claim that
\begin{equation}\label{eq:bipartANDexpansion}
\min_{(V_1, V_2)_S}\beta^{\sigma}(V_1, V_2)=\min_{\sigma'\in [\sigma]}\rho^{\sigma'}(S).
\end{equation}
Let $V_1^0$, $V_2^0$ be the bipartition of $S$ that achieves the minimum on the left hand side of (\ref{eq:bipartANDexpansion}). Suppose that $\sigma_0$ can be obtained from $\sigma$ via switching the subset $V_1^0$.
Then by Lemma \ref{lemma:switchMove},
\begin{equation}
\beta^{\sigma}(V_1^0, V_2^0)=\beta^{\sigma_0}(S, \emptyset)=\rho^{\sigma_0}(S)\geq \min_{\sigma'\in [\sigma]}\rho^{\sigma'}(S).
\end{equation}
Moreover, the inequality above can only be an equality, because otherwise there would exist a $\sigma'\in [\sigma]$ such that
\begin{equation*}
\beta^{\sigma'}(S, \emptyset)=\rho^{\sigma'}(S)< \beta^{\sigma}(V_1^0, V_2^0).
\end{equation*}
By Lemma \ref{lemma:switchMove}, we could then find a bipartition $V_1'$, $V_2'$ of $S$, such that
\begin{equation*}
\beta^{\sigma'}(S, \emptyset)=\beta^{\sigma}(V_1', V_2')< \beta^{\sigma}(V_1^0, V_2^0),
\end{equation*}
which is a contradiction.
Hence (\ref{eq:bipartANDexpansion}) holds. Then (\ref{eq:signed expansion}) follows directly.
\end{proof}
Using Corollary \ref{cor:signed expansion}, we observe the following: When $\sigma\in [\sigma_+]$, that is, when $\Gamma=(G,\sigma)$ is balanced, we have $$h_1^{\sigma}(\mu)=\min_{\emptyset\neq S\subseteq V}\rho^{\sigma_+}(S)=\min_{\emptyset\neq S\subseteq V}\frac{|E(S,\overline{S})|}{\mathrm{vol}_\mu(S)}.$$
This is the \emph{one-way Cheeger constant} \cite{Miclo2008,LOT2013}, which trivially vanishes.

Next, we compare the signed Cheeger constant with a constant due to Desai and Rao \cite{DR1994}. On unweighted graphs, Desai and Rao \cite{DR1994} introduced the \emph{non-bipartiteness parameter}
\begin{equation}\label{eq:DesaiRao}
 \alpha:=\min_{\emptyset\neq S\subseteq V}\frac{e_{\min}(S)+|E(S,\overline{S})|}{\mathrm{vol}_{\mu}(S)},
\end{equation}
where $e_{\min}(S)$ is the minimum number of edges that need to be removed from the induced subgraph of $S$ in order to make it bipartite. Hou \cite{Hou05} extended this notion to a signed graph $\Gamma=(G,\sigma)$ as
\begin{equation}\label{eq:Hou}
 \alpha^{\sigma}:=\min_{\emptyset\neq S\subseteq V}\frac{e^{\sigma}_{\min}(S)+|E(S,\overline{S})|}{\mathrm{vol}_{\mu}(S)},
\end{equation}
where $e^{\sigma}_{\min}(S)$ is the minimum number of edges that need to be removed from the induced subgraph of $S$ in order to make it balanced.
By definition, $(G, \sigma_-)$ is balanced if and only if $G$ has no odd cycles, i.e., $G$ is bipartite. Therefore, $\alpha^{\sigma_-}=\alpha$.

We notice that the quantity $e_{\min}^{\sigma}(V)$ coincides with the \emph{line index of balance} of $\Gamma$ introduced by Harary \cite{Harary59} (see also \cite{AR58}), which was later called the \emph{frustration index} (suggested to Harary by Zaslavsky in private communication from the work of Toulouse \cite{VT77}) and studied extensively, e.g.~\cite{HararyKabell80,AACE81,SZ94,Belardo14}. For a recent empirical approach of this index, see \cite{FIA11}.

For a subset $S$, we define a weighted version of the index $e^{\sigma}_{\min}(S)$ as follows. Let $\Gamma_S$ denote the induced signed graph of $S$, and $E_{|\Gamma_S}$ denote the edge set of $\Gamma_S$. Define
\begin{equation}\label{eq:weightedfrustration}
\begin{split}
& e^{\sigma}_{\min}(S):= \\
& \min_{E_1\subseteq E_{|\Gamma_S}}\{\sum_{\{u,v\}\in E_1}w_{uv}\mid \Gamma_S \text{ becomes balanced after deleting edges in } E_1\}.
\end{split}
\end{equation}
We modify the above notions (\ref{eq:DesaiRao}) and (\ref{eq:Hou}) to give the following parameter associated to $S\subseteq V$:
\begin{equation}
 \overline{\alpha}^{\sigma}(S):=\frac{2e^{\sigma}_{\min}(S)+|E(S,\overline{S})|}{\mathrm{vol}_{\mu}(S)}.
\end{equation}
This leads to the following reformulation of the signed Cheeger constant.

\begin{Cor}\label{cor:DesaiRaoHoumodified}
Let $\Gamma=(G,\sigma)$ be a signed graph. Then
\begin{equation}\label{eq:modDR}
 h_1^{\sigma}(\mu)=\min_{\emptyset\neq S\subseteq V}\overline{\alpha}^{\sigma}(S).
\end{equation}
\end{Cor}

Thus, the constant $h_1^{\sigma}(\mu)$ is larger than the parameter $\alpha^{\sigma}$ of Hou \cite{Hou05} in general. Corollary \ref{cor:DesaiRaoHoumodified} follows directly from the next lemma.

\begin{Lem}\label{lemma:threeequal}
For any $\emptyset\neq S\subseteq V$, we have
\begin{equation}\label{eq:threeequal}
\min_{(V_1, V_2)_S}\beta^{\sigma}(V_1, V_2)=\min_{\sigma'\in [\sigma]}\rho^{\sigma'}(S)=\overline{\alpha}^{\sigma}(S).
\end{equation}
\end{Lem}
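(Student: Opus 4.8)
The plan is to prove the chain of two equalities in \eqref{eq:threeequal} by establishing the first equality as a mild refinement of the argument already used for Corollary~\ref{cor:signed expansion}, and then proving the second equality $\min_{\sigma'\in[\sigma]}\rho^{\sigma'}(S)=\overline{\alpha}^{\sigma}(S)$ directly. For the first equality, note that the proof of \eqref{eq:bipartANDexpansion} given inside Corollary~\ref{cor:signed expansion} already shows $\min_{(V_1,V_2)_S}\beta^{\sigma}(V_1,V_2)=\min_{\sigma'\in[\sigma]}\rho^{\sigma'}(S)$ verbatim — the whole content of that proof works for a fixed $S$ — so strictly speaking I would just cite it. (If one wants \eqref{eq:threeequal} to be logically prior, one repeats the switching argument of Lemma~\ref{lemma:switchMove}: switching the subset $V_1^0$ achieving the minimum on the left turns the bipartition $(V_1^0,V_2^0)$ of $S$ into $(S,\emptyset)$ with $\beta^{\sigma_0}(S,\emptyset)=\rho^{\sigma_0}(S)$, and Lemma~\ref{lemma:switchMove} run in reverse rules out any $\sigma'\in[\sigma]$ doing strictly better.)

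The real work is the second equality. Fix $\emptyset\neq S\subseteq V$; only edges inside $\Gamma_S$ and the cut $|E(S,\overline S)|$ enter either side, and $|E(S,\overline S)|$ is switching invariant, so it suffices to show
\[
\min_{\sigma'\in[\sigma]}|E^-(S)|(\sigma')=2\,e^{\sigma}_{\min}(S),
\]
where the left side ranges over signatures switching-equivalent to $\sigma$ on the underlying graph of $\Gamma_S$ and $|E^-(S)|$ counts each negative edge twice (per the convention recorded after the definition of $|E^{\pm}(V_1,V_2)|$). Dividing by $2$, I must prove that the minimum over the switching class of the total weight of negative edges inside $\Gamma_S$ equals $e^{\sigma}_{\min}(S)$, the minimum weight of an edge set whose deletion makes $\Gamma_S$ balanced.

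For the inequality $\min_{\sigma'}\tfrac12|E^-(S)|(\sigma')\ge e^{\sigma}_{\min}(S)$: pick $\sigma'\in[\sigma]$ minimizing the negative-edge weight in $\Gamma_S$, let $E_1$ be the set of $\sigma'$-negative edges of $\Gamma_S$, and observe that deleting $E_1$ leaves a signed graph all of whose edges are positive, hence balanced; so $e^{\sigma}_{\min}(S)\le\sum_{\{u,v\}\in E_1}w_{uv}=\tfrac12|E^-(S)|(\sigma')$, using that balance is switching invariant (Zaslavsky's lemma) so $\Gamma_S$ with $\sigma'$ and $\Gamma_S$ with $\sigma$ have the same frustration index. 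For the reverse inequality: let $E_1$ realize $e^{\sigma}_{\min}(S)$, so $\Gamma_S\setminus E_1$ is balanced; by Harary's Balance Theorem (Theorem~\ref{thmHararyBalance}) applied to each connected component of $\Gamma_S\setminus E_1$, there is a switching function $\theta$ on $S$ making every edge of $\Gamma_S\setminus E_1$ positive, i.e. $\sigma^{\theta}$ is negative only on (a subset of) $E_1$; hence $\tfrac12|E^-(S)|(\sigma^{\theta})\le\sum_{\{u,v\}\in E_1}w_{uv}=e^{\sigma}_{\min}(S)$, and $\sigma^{\theta}\in[\sigma]$. Combining the two inequalities gives the claim, and hence \eqref{eq:threeequal}.

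The main obstacle is purely bookkeeping rather than conceptual: one must be careful that $[\sigma]$ in the middle term of \eqref{eq:threeequal} refers to switching within the induced subgraph $\Gamma_S$ (switching functions on $S$), that edges across the cut are untouched by such switchings and contribute identically to all three expressions, and that the factor-of-two discrepancy between $\beta^{\sigma}$ (which weights $|E^-(V_1)|,|E^-(V_2)|$ with their doubled in-block convention) and $e^{\sigma}_{\min}$ (which counts each deleted edge once) is tracked correctly — this is exactly why the modified parameter $\overline{\alpha}^{\sigma}(S)$ carries the coefficient $2e^{\sigma}_{\min}(S)$ rather than $e^{\sigma}_{\min}(S)$, as the remark about Desai--Rao and Hou anticipates. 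Once the conventions are pinned down, each of the two inequalities is a one-line application of Harary's theorem / Zaslavsky's switching lemma.
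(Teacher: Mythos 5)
Your proposal is correct and follows essentially the same route as the paper: the first equality is cited from the proof of \eqref{eq:bipartANDexpansion}, and the second is reduced to the identity $\min_{\sigma'\in[\sigma]}|E^-(S)|(\sigma')=2e^{\sigma}_{\min}(S)$, proved in both directions via Harary's Balance Theorem / Zaslavsky's switching lemma and the switching invariance of balance, exactly as the paper does (the paper phrases the reverse direction through the bipartition of $S$ supplied by Theorem~\ref{thmHararyBalance} rather than through the equivalent switching function, a purely cosmetic difference). Your bookkeeping of the factor of two and of the switching-invariance of $|E(S,\overline{S})|$ matches the paper's conventions.
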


\begin{proof}
The first equality follows from (\ref{eq:bipartANDexpansion}).
To prove the second equality,
let $\Gamma_S$ denote the induced signed graph of $S$ and let $\sigma_0$ be the signature that achieves $\min_{\sigma'\in [\sigma]}\rho^{\sigma'}(S)$.
It is easy to see that
\begin{equation*}
 2e_{\min}^{\sigma}(S)\leq |E^-(S)|(\sigma_0).
\end{equation*}
Therefore, $\overline{\alpha}^{\sigma}(S)\leq \min_{\sigma'\in [\sigma]}\rho^{\sigma'}(S)$.

Now let $\Gamma_S'$ be the balanced graph obtained from $\Gamma_S$ by deleting edges in $E_1$, which is the subset of $E_{|\Gamma_S}$ that attains the minimum in the definition (\ref{eq:weightedfrustration}) of $e_{\min}^{\sigma}(S)$. By Theorem \ref{thmHararyBalance}, there exists a bipartition $V_1$, $V_2$ of $S$ such that $$|E^+_{|\Gamma_S'}(V_1, V_2)|=|E^-_{|\Gamma_S'}(V_1)|=|E^-_{|\Gamma_S'}(V_2)|=0.$$
This implies
\begin{equation*}
 2e_{\min}^{\sigma}(S)=2|E^+_{|\Gamma_S}(V_1, V_2)|+|E^-_{|\Gamma_S}(V_1)|+|E^-_{|\Gamma_S}(V_2)|.
\end{equation*}
Hence $\overline{\alpha}^{\sigma}(S)\geq \min_{(V_1, V_2)_S}\beta^{\sigma}(V_1, V_2)$. This proves the second equality.
\end{proof}
\begin{Rmk}
 The (unweighted) equality $2e_{\min}^{\sigma}(S)=\min_{\sigma'\in [\sigma]}|E^-(S)|(\sigma')$ seems to be folklore among graph theorists; see \cite[Theorem 3.3]{Zaslavsky10}.
We include a proof here for completeness.
\end{Rmk}



We compare the signed Cheeger constant with the \emph{degree of balance }$b(\Gamma)$ of a signed graph $\Gamma$ introduced by Cartwright and Harary \cite{CartHarary56}. In \cite{CartHarary56}, they also aimed at quantifying the deviation of a signed graph from being balanced. Their constant $b(\Gamma)$ is defined as
\begin{equation}
 b(\Gamma):=\frac{\text{the number of positive cycles of } \Gamma}{\text{the number of cycles of } \Gamma}.
\end{equation}
Observe $b(\Gamma)\in [0,1]$.
Smaller values of  $1-b(\Gamma)$ imply that $\Gamma$ is closer to being balanced.
Consider the signed graph $\Gamma=(\mathcal{C}_n, \sigma)$ where $\mathcal{C}_n$ is the unweighted cycle graph on $n$ vertices and $\sigma$ is the signature such that $\Gamma$ has exactly one negative edge. Intuitively, $\Gamma$ is close to being balanced. Actually, we have
\begin{equation}
 1-b(\Gamma)=1 \text{  and  }h^{\sigma}_1(\mu_d)=\frac{1}{n}.
\end{equation}
This shows that the constant $h^{\sigma}_1(\mu_d)$ is finer than $b(\Gamma)$.

\subsection{Multi-way signed Cheeger constants}
Extending Definition \ref{defSCheeger} in the spirit of \cite{Miclo2008,LOT2013,Liu13}, we can define a family of multi-way signed Cheeger constant $\{h_k^{\sigma}(\mu), k=1,2,\ldots n\}$ in a natural way.

\begin{Def}\label{def:multiwayCheeger}
For $1\leq k\leq n$, the $k$-way signed Cheeger constant $h^{\sigma}_k(\mu)$ of a signed graph $\Gamma=(G,\sigma)$ is defined as
\begin{equation}\label{eq:signedCheeger}
h^{\sigma}_k(\mu):=\min_{\{(V_{2i-1}, V_{2i})\}_{i=1}^{k}}\; \max_{1\leq i\leq k}\beta^{\sigma}(V_{2i-1}, V_{2i}),
\end{equation}
where the minimum is taken over the space of all possible $k$ pairs of disjoint sub-bipartitions $(V_1, V_2),(V_3, V_4),\ldots, (V_{2k-1}, V_{2k})$. To ease the notation, we denote this space by $\mathrm{Pair}(k)$ and call every element of $\mathrm{Pair}(k)$ a $k$-sub-bipartition of $V$.
\end{Def}

The quantity $h_1^{\sigma}(\mu)$ defined in (\ref{defSCheeger}) is the first one of this family.  We have the monotonicity $h^{\sigma}_k(\mu)\leq h^{\sigma}_{k+1}(\mu)$.
Moreover, Theorem \ref{thmHararyBalance} implies the following property.
\begin{satz}
For a signed graph $\Gamma=(G,\sigma)$ and $1\leq k\leq n$, it holds that
   $h^{\sigma}_k(\mu)=0$ if and only if $\Gamma$ has $k$ balanced connected components.
\end{satz}

Roughly speaking, the $k$-way signed Cheeger constant is a ``mixture" of the $k$-way Cheeger constant $h_k(\mu)$ introduced by Miclo \cite{Miclo2008} (see also \cite{LOT2013}) and the $k$-way dual Cheeger constant $\overline{h}_k(\mu)$ in \cite{Liu13} for unsigned graphs.

By Lemma \ref{lemma:switchMove}, $h_k^{\sigma}(\mu)$ is switching invariant for any $1\leq k\leq n$. Furthermore, Lemma \ref{lemma:threeequal} implies the following equivalent expressions for $h_k^{\sigma}(\mu)$:
\begin{align*}
 h_k^{\sigma}(\mu)=&\min_{\sigma'\in [\sigma]}\;\min_{\{S_i\}_{i=1}^k}\;\max_{1\leq i\leq k}\rho^{\sigma'}(S_i)\\
=&\min_{\{S_i\}_{i=1}^k}\;\max_{1\leq i\leq k}\overline{\alpha}^{\sigma}(S_i),
\end{align*}
where the minimum $\min_{\{S_i\}_{i=1}^k}$ is taken over the space of all possible $k$-subpartitions, $S_1, S_2, \ldots, S_k$, where $S_i\neq \emptyset$ for any $1\leq i\leq k$.
In particular, we observe that $h_2^{\sigma}(\mu)$ reduces to the \emph{classical Cheeger constant} when $\sigma\in [\sigma_+]$.

\begin{Rmk}
The (multi-way) signed Cheeger constants provide new insights into existing constants reflecting connectivity (e.g., Cheeger constants) or bipartiteness (e.g., dual Cheeger constants, bipartiteness ratio of Trevisan, non-bipartiteness parameter of Desai and Rao) of unsigned graphs in the language of switching within the framework of signed graphs, thus giving a unified viewpoint about connectivity and bipartiteness of the underlying graph via assigning signatures.
\end{Rmk}

We can also define a natural family of \emph{antithetical dual signed Cheeger constants} $\{\widetilde{h}^{\sigma}_k(\mu), k=1,2,\ldots, n\}$ by $\widetilde{h}^{\sigma}_k(\mu):=h^{-\sigma}_k(\mu)$.  Dually, we have that
\begin{equation}\label{fact3}
\Gamma \text{ has an antibalanced connected component} \iff  \widetilde{h}_1^{\sigma}(\mu)=0.
\end{equation}

\section{Signed Cheeger inequality}
In this section, we prove the following signed Cheeger inequality.

\begin{Thm}\label{thmCheegerEsti}
Given a signed graph $\Gamma=(G, \sigma)$, we have
\begin{equation}\label{CheegerEsti}
\frac{\lambda_1(\Delta^{\sigma})}{2}\leq h^{\sigma}_1(\mu_d)\leq \sqrt{2\lambda_1(\Delta^{\sigma})}.
\end{equation}
\end{Thm}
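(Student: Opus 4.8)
The plan is to run both halves of the classical Cheeger argument in the signed setting. First I would record the Rayleigh-quotient description
\[
\lambda_1(\Delta^{\sigma})=\min_{f\not\equiv 0}\frac{\sum_{\{u,v\}\in E}w_{uv}\bigl(f(u)-\sigma(uv)f(v)\bigr)^2}{\sum_{u\in V}d_u f(u)^2},
\]
obtained by checking that the numerator equals $\langle f,\Delta^{\sigma}f\rangle$ for the degree-weighted inner product $\langle f,g\rangle=\sum_u d_u f(u)g(u)$; note that, in contrast to the unsigned case, there is no orthogonality constraint, since $\lambda_1$ sits at the bottom of the spectrum.

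For the left-hand inequality I would take a sub-bipartition $(V_1,V_2)$ attaining $h_1^{\sigma}(\mu_d)$ and plug the test function $f=\mathbf{1}_{V_1}-\mathbf{1}_{V_2}$ into the Rayleigh quotient. A case check on edges shows that $\bigl(f(u)-\sigma(uv)f(v)\bigr)^2$ equals $4$ on negative edges inside $V_1$ or inside $V_2$ and on positive edges between $V_1$ and $V_2$, equals $1$ on edges leaving $V_1\cup V_2$, and $0$ otherwise; hence the numerator is at most $2\bigl(2|E^+(V_1,V_2)|+|E^-(V_1)|+|E^-(V_2)|+|E(V_1\cup V_2,\overline{V_1\cup V_2})|\bigr)$ while the denominator is exactly $\mathrm{vol}_{\mu_d}(V_1\cup V_2)$. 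This gives $\lambda_1(\Delta^{\sigma})\le 2\beta^{\sigma}(V_1,V_2)=2h_1^{\sigma}(\mu_d)$.

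The right-hand inequality is the substantive part, and I would prove it by a co-area/threshold rounding of an eigenfunction, unifying the rounding used for the classical Cheeger inequality with the one used by Trevisan and by Bauer--Jost for bipartiteness. Let $f$ realize $\lambda_1(\Delta^{\sigma})$ and put $M=\max_u f(u)^2$; for $t\in[0,M)$ set $V_1(t)=\{u:f(u)>0,\ f(u)^2>t\}$, $V_2(t)=\{u:f(u)<0,\ f(u)^2>t\}$, $S(t)=V_1(t)\cup V_2(t)$. Integrating over $t$ gives $\int_0^M\mathrm{vol}_{\mu_d}(S(t))\,dt=\sum_u d_u f(u)^2$, and, splitting each edge $\{u,v\}$ according to how many endpoints lie in $S(t)$ and whether it is \emph{frustrated} in the sense $\sigma(uv)f(u)f(v)<0$, one obtains $\int_0^M N(t)\,dt=\sum_{\{u,v\}\in E}w_{uv}c_{uv}$, where $N(t)$ denotes the numerator of $\beta^{\sigma}(V_1(t),V_2(t))$ and $c_{uv}=f(u)^2+f(v)^2$ on frustrated edges, $c_{uv}=\bigl||f(u)|^2-|f(v)|^2\bigr|$ otherwise. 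Using the identities $(f(u)-\sigma(uv)f(v))^2=(|f(u)|+|f(v)|)^2$ on frustrated edges and $(|f(u)|-|f(v)|)^2$ otherwise, one verifies the pointwise bound $c_{uv}\le|f(u)-\sigma(uv)f(v)|\cdot(|f(u)|+|f(v)|)$ in every case; Cauchy--Schwarz, together with the eigenvalue equation and $(a+b)^2\le 2(a^2+b^2)$, then gives $\sum_{\{u,v\}}w_{uv}c_{uv}\le\sqrt{2\lambda_1(\Delta^{\sigma})}\,\sum_u d_u f(u)^2$. Hence $\int_0^M N(t)\,dt\le\sqrt{2\lambda_1(\Delta^{\sigma})}\int_0^M\mathrm{vol}_{\mu_d}(S(t))\,dt$, so some threshold $t$ yields $\beta^{\sigma}(V_1(t),V_2(t))\le\sqrt{2\lambda_1(\Delta^{\sigma})}$, and therefore $h_1^{\sigma}(\mu_d)\le\sqrt{2\lambda_1(\Delta^{\sigma})}$.

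I expect the main obstacle to be the bookkeeping in the co-area step for the upper bound: one must account simultaneously for the boundary edges $E(S(t),\overline{S(t)})$ and for the internal frustrated edges with their correct coefficients in $\beta^{\sigma}$, and then arrange the sign algebra through $\sigma(uv)f(u)f(v)$ so that the per-edge quantity $c_{uv}$ is dominated by $|f(u)-\sigma(uv)f(v)|\,(|f(u)|+|f(v)|)$ uniformly; this domination, together with the factor $2$ coming from $(a+b)^2\le 2(a^2+b^2)$, is exactly what produces the constant $\sqrt 2$. Once the Rayleigh quotient is in hand, the lower bound is routine.
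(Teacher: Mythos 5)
Your proposal is correct, and it follows essentially the same route as the paper: the same test function $\mathbf{1}_{V_1}-\mathbf{1}_{V_2}$ for the lower bound, and for the upper bound the same Trevisan-style threshold rounding of the first eigenfunction, where your per-edge quantity $c_{uv}$ and the bound $c_{uv}\leq |f(u)-\sigma(uv)f(v)|\,(|f(u)|+|f(v)|)$ are exactly the content of the paper's Lemma~\ref{lemmaDisc1}, followed by the same Cauchy--Schwarz and averaging steps (Lemma~\ref{lemma:basicbrick}).
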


The lower bound estimate in (\ref{CheegerEsti}) is easier.

\begin{proof}[Proof of the lower bound estimate of Theorem \ref{thmCheegerEsti}]
For any $(V_1, V_2)$, consider the particular function given by,
\begin{equation*}
f(u)=\left\{
       \begin{array}{rl}
         1, & \hbox{if $u\in V_1$;} \\
         -1, & \hbox{if $u\in V_2$;} \\
         0, & \hbox{otherwise.}
       \end{array}
     \right.
\end{equation*}
We calculate
\begin{align*}
\mathcal{R}^{\sigma}(f)
& =\frac{4|E^+(V_1, V_2)|+2|E^-(V_1)|+2|E^-(V_2)|+|E(V_1\cup V_2, \overline{V_1\cup V_2})|}{\text{vol}_{\mu}(V_1\cup V_2)}\\
& \leq 2\beta^{\sigma}(V_1, V_2).
\end{align*}
Then (\ref{varForleatLar}) implies $\lambda_1(\Delta^\sigma)\leq 2h_1^{\sigma}(\mu)$.
\end{proof}

The upper bound estimate in (\ref{CheegerEsti}) is more difficult.
The proof is based on the crucial observation that the estimate of $\lambda_1(\Delta^{\sigma})$ should be considered as a ``mixture" of the estimates of the smallest and largest eigenvalues of unsigned graphs (for which the smallest eigenvalue trivially equals $0$). This can be seen more clearly from the corresponding Rayleigh quotients.
One can appeal either to the techniques for proving the Cheeger inequality for unsigned graphs \cite{AM1985,Alon1986,Dodziuk1984} or to those for proving the dual Cheeger inequality \cite{Trevisan2012,BJ}.
For the former strategy, one needs first to switch the signature to be the one achieving the first minimum in (\ref{eq:signed expansion}).
We adopt here the latter strategy, for which we do not need to switch the signature to a proper one first. In fact, we will use the idea of Trevisan \cite{Trevisan2012} for proving the dual Cheeger inequality for unsigned graphs.

Given a non-zero function $f: V\rightarrow \mathbb{R}$ and a real number $t\geq 0$, define the following subsets of $V$:
\begin{equation*}
V_f(t):=\{u\in V: f(u)\geq t\}, \qquad V_f(-t):=\{u\in V: f(u)\leq -t\}.
\end{equation*}
Note that for $t=0$, we have the special definition $V_f(0):=\{u\in V: f(u)\geq 0\}$ and $V_f(-0):=\{u\in V: f(u)<0\}$.
Suppose $\max_{u\in V}f(u)^2=1$. For any $t\in [0,1]$, define a vector $$Y_f(t)\in \{-1,0,1\}^{V}$$ as follows: For each $u\in V$,
\begin{equation*}
(Y_f(t))_u:=\left\{
         \begin{array}{rl}
           1, & \hbox{if $u\in V_f(\sqrt{t})$;} \\
           -1, & \hbox{if $u\in V_f(-\sqrt{t})$;} \\
           0, & \hbox{otherwise.}
         \end{array}
       \right.
\end{equation*}
The following lemma is crucial for our purposes.

\begin{Lem}\label{lemmaDisc1}
For any $\{u,v\}\in E$,
\begin{equation}\label{lemmaForCheeger}
\int_0^1|(Y_f(t))_u-\sigma(uv)(Y_f(t))_v|\,dt \leq |f(u)-\sigma(uv)f(v)|(|f(u)|+|f(v)|).
\end{equation}
\end{Lem}

\begin{proof} It is enough to prove the estimate
\begin{equation}\label{lemmaForCheegerProof}
\int_0^1|(Y_f(t))_u-(Y_f(t))_v|\,dt \leq |f(u)-f(v)|(|f(u)|+|f(v)|)
\end{equation}
 for any non-zero functions $f: V \to [-1,1]$.
Then the estimate (\ref{lemmaForCheeger}) follows from applying (\ref{lemmaForCheegerProof}) to a function $g: V\to [-1,1]$ satisfying $g(u)=f(u)$ and $g(v)=\sigma(uv)f(v)$.

Now we prove (\ref{lemmaForCheegerProof}).
Without loss of generality, suppose $|f(u)|\geq |f(v)|$.
\begin{description}
  \item[Case 1] $f(u)$ and $f(v)$ have different signs. Then,
\begin{equation*}
|(Y_f(t))_u-(Y_f(t))_v|=\left\{
                  \begin{array}{ll}
                    2, & \hbox{if $t\leq f(v)^2$;} \\
                    1, & \hbox{if $f(v)^2< t\leq f(u)^2$;} \\
                    0, & \hbox{if $t> f(u)^2$.}
                  \end{array}
                \right.
\end{equation*}
Therefore,
\begin{align*}
 \int_0^1|(Y_f(t))_u-(Y_f(t))_v| \, dt &= f(u)^2+f(v)^2\leq (|f(u)|+|f(v)|)^2\\
 & = |f(u)-f(v)|(|f(u)|+|f(v)|).
\end{align*}

\item[Case 2] $f(u)$ and $f(v)$ have the same sign. In this case,
\begin{equation*}
|(Y_f(t))_u-(Y_f(t))_v|=\left\{
                  \begin{array}{ll}
                    0, & \hbox{if $t\leq f(v)^2$;} \\
                    1, & \hbox{if $f(v)^2< t\leq f(u)^2$;} \\
                    0, & \hbox{if $t> f(u)^2$.}
                  \end{array}
                \right.
\end{equation*}
Therefore,
\begin{align*}
 \int_0^1|(Y_f(t))_u-(Y_f(t))_v| \, dt&=f(u)^2-f(v)^2\\&=|f(u)-f(v)|\,(|f(u)|+|f(v)|).
\end{align*}
\end{description}
This completes the proof of (\ref{lemmaForCheegerProof}).
\end{proof}
\begin{Rmk}
In the proof of Lemma \ref{lemmaDisc1}, we used local level dualities to bring the two extremal cases together. We will use this principle again in the proofs of Lemma \ref{lemmaLocalization}, Lemma \ref{lemmaImprCheeger}, Lemma \ref{satzF}(ii), and Claim \ref{claim3} in later sections.
\end{Rmk}

In the following, we denote $$d_{\mu}^{w}:=\max_{u}\left\{\frac{\sum_{v,v\sim u}w_{uv}}{\mu(u)}\right\}.$$
\begin{Lem}\label{lemma:basicbrick}
For any non-zero function $f: V\rightarrow \mathbb{R}$, there exists a $t'\in [0, \max_{u\in V}f^2(u)]$ such that
\begin{equation}
 \beta^{\sigma}(V_f(\sqrt{t'}), V_f(-\sqrt{t'}))\leq \sqrt{2d_{\mu}^w\mathcal{R}^{\sigma}(f)}.
\end{equation}
\end{Lem}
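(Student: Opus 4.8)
The plan is to extract the threshold $t'$ by an averaging argument over the level-set parameter $t$. All the analytic work has in fact already been done in the computations leading up to the statement, so the role of this proof is mainly bookkeeping. First I would recall that for the quantity
\[
 I=\frac{\int_0^1\sum_{u\sim v}w_{uv}|Y_u(t)-\sigma(uv)Y_v(t)|\,dt}{\int_0^1\sum_{u\in V}\mu(u)|Y_u(t)|\,dt}
\]
the chain of inequalities above (Cauchy--Schwarz, followed by \eqref{NorNonnor} and the definition of $d_\mu^w$) gives $I\leq\sqrt{2 d_\mu^w\,\mathcal{R}^\sigma(f)}$. Here one must first rescale: replacing $f$ by $f/\max_{u}|f(u)|$ changes neither $\mathcal{R}^\sigma(f)$ nor $\beta^\sigma$, so we may assume $\max_u f(u)^2=1$ and the vectors $Y(t)$ are defined on $[0,1]$ as in the construction.

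Next I would use the elementary fact that if $\int_0^1 g(t)\,dt \big/ \int_0^1 h(t)\,dt \le c$ with $g,h\geq 0$ and $\int_0^1 h(t)\,dt>0$, then there exists $t'\in[0,1]$ with $g(t')\le c\,h(t')$ (otherwise $g(t)>c\,h(t)$ for a.e.\ $t$, forcing $\int g>c\int h$; note $\int_0^1 h\,dt=\sum_u\mu(u)f(u)^2>0$ since $f\not\equiv 0$). Apply this with
\[
 g(t)=\sum_{u\sim v}w_{uv}|Y_u(t)-\sigma(uv)Y_v(t)|,\qquad h(t)=\sum_{u\in V}\mu(u)|Y_u(t)|,\qquad c=\sqrt{2d_\mu^w\mathcal{R}^\sigma(f)}.
\]
Then invoke the two identities recorded just before the lemma, namely that $g(t)$ equals $2|E^+(V_f(\sqrt t),V_f(-\sqrt t))|+|E^-(V_f(\sqrt t))|+|E^-(V_f(-\sqrt t))|+|E(V_f(\sqrt t)\cup V_f(-\sqrt t),\overline{V_f(\sqrt t)\cup V_f(-\sqrt t)})|$ and that $h(t)=\mathrm{vol}_\mu(V_f(\sqrt t)\cup V_f(-\sqrt t))$. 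Dividing one by the other at $t=t'$ yields exactly $\beta^\sigma(V_f(\sqrt{t'}),V_f(-\sqrt{t'}))\le\sqrt{2d_\mu^w\mathcal{R}^\sigma(f)}$, which is the claim (after undoing the normalization, $t'$ lies in $[0,\max_u f^2(u)]$).

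I do not expect any genuine obstacle here: the only points that require a little care are (i) checking that $V_f(\sqrt{t'})$ and $V_f(-\sqrt{t'})$ genuinely form a sub-bipartition, i.e.\ they are disjoint and their union is nonempty — disjointness is built into the definition of $Y(t)$, and nonemptiness for $t'<1$ follows because $V_f(\sqrt{t'})$ contains the maximizer of $|f|$; and (ii) handling the boundary conventions at $t=0$ (the special definitions of $V_f(0)$, $V_f(-0)$), which is purely a matter of being consistent with the construction already given. If one wants to be fully careful about the degenerate case where $h$ could vanish on a set of positive measure, one simply notes the averaging statement only needs $\int_0^1 h>0$, already verified.
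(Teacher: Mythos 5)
Your proposal is correct and follows essentially the same route as the paper: the computations preceding the lemma already give $I\leq\sqrt{2d_{\mu}^{w}\mathcal{R}^{\sigma}(f)}$ together with the two identities for the numerator and denominator, and the paper's "hence" is exactly the averaging-over-$t$ step you spell out. Your extra remarks (rescaling, nonemptiness of $V_f(\sqrt{t'})\cup V_f(-\sqrt{t'})$ via the maximizer of $|f|$, and the boundary convention at $t=0$) are just the details the paper leaves implicit.
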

\begin{proof}
Without loss of generality, we can assume $\max_{u\in V}f^2(u)=1$. Consider the ratio
$$I_f:=\frac{\int_0^1\sum_{u\sim v}w_{uv}|(Y_f(t))_u-\sigma(uv)(Y_f(t))_v|dt}{\int_0^1\sum_{u\in V}\mu(u)|(Y_f(t))_u|dt}.$$
By the definition of the vector $Y_f(t)$, we check that
\begin{align*}
 \sum_{u\sim v}w_{uv}&|(Y_f(t))_u-\sigma(uv)(Y_f(t))_v|\\
 =\; &2|E^+(V_f(\sqrt{t}), V_f(-\sqrt{t}))|+|E^-(V_f(\sqrt{t}))|+|E^-(V_f(-\sqrt{t}))|\\
 &+|E(V_f(\sqrt{t})\cup V_f(-\sqrt{t}), \overline{V_f(\sqrt{t})\cup V_f(-\sqrt{t})})|,
\end{align*}
and $$\sum_u\mu(u)|(Y_f(t))_u|=\text{vol}_{\mu}(V_f(\sqrt{t})\cup V_f(-\sqrt{t})).$$
Therefore, there exists $t'\in [0,1]$ such that
\begin{equation}\label{eq:ingredient1}
\beta^\sigma(V_f(\sqrt{t'}), V_t(-\sqrt{t'}))\leq I_f.
\end{equation}

On the other hand, we estimate
\begin{align*}
 I_f&\leq \dfrac{\sum_{u\sim v}w_{uv}|f(u)-\sigma(uv)f(v)|(|f(u)|+|f(v)|)}{\sum_u\mu(u)f(u)^2}\\[1ex]
&\leq \dfrac{\sqrt{\sum_{u\sim v}w_{uv}|f(u)-\sigma(uv)f(v)|^2} \;\sqrt{\sum_{u\sim v}w_{uv}(|f(u)|+|f(v)|)^2}}{\sum_u\mu(u)f(u)^2}.
\end{align*}
In the first inequality above, we used Lemma \ref{lemmaDisc1} and the fact that
$$\int_0^1|(Y_f(t))_u|\, dt=f(u)^2.$$
Observing that
\begin{align}\label{NorNonnor}\sum_{u\sim v}w_{uv}(|f(u)|+|f(v)|)^2&\leq \frac{1}{2}\sum_{u}\sum_{v,v\sim u}w_{uv}(2|f(u)|^2+2|f(v)|^2)\notag\\
&\leq 2d_\mu^w\sum_u\mu(u)f(u)^2,\end{align}
we arrive at
\begin{equation}\label{eq:ingredient2}
I_f\leq \sqrt{2d_\mu^w\mathcal{R}^{\sigma}(f)}.
\end{equation}
Combining (\ref{eq:ingredient1}) and (\ref{eq:ingredient2}) proves the lemma.
\end{proof}

\begin{proof}[Proof of the upper bound estimate of Theorem \ref{thmCheegerEsti}]
 Applying Lemma~\ref{lemma:basicbrick} to the first eigenfunction $\phi_1$ and choosing $\mu=\mu_d$, the upper bound estimate of (\ref{CheegerEsti}) is proved.
\end{proof}

For the signed non-normalized Laplace matrix $L^{\sigma}$, we have the following Cheeger-type estimate.
\begin{Thm}\label{thmCheegerNonnormalized}
Let $\Gamma=(G, \sigma)$ be a signed graph. Then
\begin{equation}\label{CheegerEstiNonnormalized}
\frac{\lambda_1(L^{\sigma})}{2}\leq  h_1^{\sigma}(\mu_1)\leq \sqrt{2d_{\max}\lambda_1(L^{\sigma})}.
\end{equation}
where $d_{\max}:=\max_{u\in V}d_u$.
\end{Thm}
\begin{proof}
Applying Lemma \ref{lemma:basicbrick} to the first eigenfunction of $L^\sigma$ and choosing $\mu=\mu_1$ yield the upper bound estimate. The lower bound estimate follows similarly as in Theorem \ref{thmCheegerEsti}.
\end{proof}

%

\section{Higher-order signed Cheeger inequalities}
In this section, we prove higher-order versions of the signed Cheeger inequality \eqref{CheegerEsti}.

\begin{Thm}\label{HigerCheeger}
There exists an absolute constant $C$ such that for any signed graph $\Gamma=(G,\sigma)$ and any $k \in \{1,2,\dots,N\}$,
\begin{equation}\label{eq:higherCHeegerIn}
\frac{\lambda_k(\Delta^{\sigma})}{2}\leq h_k^{\sigma}(\mu_d)\leq Ck^3\sqrt{\lambda_k(\Delta^{\sigma})}.
\end{equation}
\end{Thm}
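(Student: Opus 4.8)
The plan is to reduce the signed higher-order Cheeger inequality to the unsigned one from \cite{LOT2013} by a switching/doubling argument. The lower bound $\lambda_k(\Delta^{\sigma})/2 \le h_k^{\sigma}(\mu_d)$ is the routine direction: given a near-optimal $k$-tuple of disjointly supported sub-bipartitions $(V_1^{(i)},V_2^{(i)})$, form the test functions $f_i$ that are $+1$ on $V_1^{(i)}$, $-1$ on $V_2^{(i)}$, and $0$ elsewhere; these have disjoint supports, so they span a $k$-dimensional space, and a direct computation of the Rayleigh quotient $\frac{\langle f_i, \Delta^{\sigma} f_i\rangle}{\langle f_i, f_i\rangle_{\mu_d}}$ shows it is bounded by $2\beta^{\sigma}(V_1^{(i)},V_2^{(i)})$ (the factor $2$ coming from the $2|E^+(V_1,V_2)|$ term, exactly as in Theorem \ref{thmCheegerEsti}); then the Courant–Fischer min-max principle applied to this $k$-dimensional subspace gives $\lambda_k(\Delta^{\sigma}) \le 2 h_k^{\sigma}(\mu_d)$.

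For the upper bound I would exploit the correspondence between a signed graph $\Gamma=(G,\sigma)$ and its \emph{signed double cover} (the $2$-lift) $\widehat{G}$: this is an unsigned graph on vertex set $V \times \{+,-\}$ in which the positive edge $\{u,v\}$ lifts to $\{(u,+),(v,+)\},\{(u,-),(v,-)\}$ and the negative edge lifts to the crossed pair $\{(u,+),(v,-)\},\{(u,-),(v,+)\}$. The key spectral fact is that the eigenvalues of the unsigned normalized Laplacian $\Delta$ of $\widehat{G}$ are precisely the eigenvalues of $\Delta^{\sigma}$ together with those of $\Delta^{-\sigma}$ (the ``symmetric'' and ``antisymmetric'' eigenfunctions under the deck transformation). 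Moreover, a subset $\widehat{S} \subseteq V \times \{+,-\}$ of the double cover, after projecting to $V$, decomposes $V$ into the part seen only with the $+$ copy, only with the $-$ copy, both copies, or neither; the part seen with exactly one copy yields a sub-bipartition $(V_1,V_2)$ of $\Gamma$, and one checks that the unsigned expansion $\phi_{\widehat{G}}(\widehat{S})$ of a judiciously chosen lift controls $\beta^{\sigma}(V_1,V_2)$. Applying the higher-order Cheeger inequality of \cite{LOT2013} to $\widehat{G}$ produces $k$ disjoint low-expansion sets in the double cover; I would then argue that among the corresponding projected sub-bipartitions one can extract $k$ that remain essentially disjoint in $\Gamma$ and whose signed bipartiteness ratios are each $O(k^3\sqrt{\lambda_k(\Delta^{\sigma})})$, using that the first $k$ eigenvalues of $\widehat{G}$ that are ``$\sigma$-symmetric'' are exactly $\lambda_1(\Delta^{\sigma}),\dots,\lambda_k(\Delta^{\sigma})$.

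The main obstacle is precisely this last extraction step: the $k$ low-expansion sets produced in the double cover need not be invariant (nor anti-invariant) under the deck transformation, so projecting them to $V$ may cause overlaps or may not cleanly separate the ``$+$ side'' from the ``$-$ side'' needed to read off a sub-bipartition. I expect the fix to be a localization/spectral-embedding argument in the spirit of \cite{LOT2013}: rather than lifting geometric sets, lift the eigenfunctions $f_1,\dots,f_k$ of $\Delta^{\sigma}$ to $\sigma$-symmetric eigenfunctions on $\widehat{G}$, run the random-partitioning machinery of \cite{LOT2013} directly on this embedding into $\R^k$, and observe that the resulting regions automatically respect the $\pm$ structure because the embedding does; the loss is the same $k^{O(1)}$ factor. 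An alternative, if the double-cover bookkeeping proves unwieldy, is to prove the inequality from scratch by adapting the metric-space partitioning of \cite{LOT2013} to the projective-space metric on $P^{k-1}\R$ mentioned in the introduction, which is the natural target for signed spectral embeddings; the spreading and localization lemmas carry over with the sign of an edge absorbed into whether one measures $\|f(u)-f(v)\|$ or $\|f(u)+f(v)\|$. Either route gives the stated bound with an absolute constant $C$.
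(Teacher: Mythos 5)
Your lower bound argument is exactly the paper's and is fine. The upper bound, however, rests on a spectral fact about the double cover that is not correct: for the $2$-lift $\widehat{G}$ associated to $\sigma$, the deck-antisymmetric eigenfunctions carry $\mathrm{spec}(\Delta^{\sigma})$, but the deck-symmetric ones carry the spectrum of the \emph{unsigned} Laplacian $\Delta^{\sigma_+}$ of $G$, not of $\Delta^{-\sigma}$. This is not a cosmetic slip. Since the unsigned Laplacian always has eigenvalue $0$ (one per connected component of $G$), the merged spectrum gives $\lambda_k(\widehat{G})$ possibly far smaller than $\lambda_k(\Delta^{\sigma})$, and the low-conductance sets produced by applying \cite{LOT2013} to $\widehat{G}$ can come entirely from the symmetric part: e.g.\ the full preimage $\widehat{S}=S\times\{+,-\}$ of a low-conductance set $S\subseteq V$ has small unsigned expansion in $\widehat{G}$ regardless of how frustrated the induced signed subgraph on $S$ is, and its projection has $V_1=V_2=\emptyset$ in your decomposition, so it yields no sub-bipartition and no control on $\beta^{\sigma}$. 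Your dictionary ``expansion of $\widehat{S}$ controls $\beta^{\sigma}(V_1,V_2)$'' is only valid when $\widehat{S}$ meets each fibre in at most one point, and the black-box application of \cite{LOT2013} gives you no way to force that; restricting to the antisymmetric spectrum is equivalent to abandoning the reduction and working with $\Delta^{\sigma}$ directly.

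Your second fallback --- run the partitioning machinery of \cite{LOT2013} directly on the embedding by the first $k$ eigenfunctions of $\Delta^{\sigma}$, with the projective-space metric absorbing the sign --- is indeed the route the paper takes, but as stated it is a sketch of intent rather than a proof, and it omits the two places where actual work is needed. First, the claim that ``the resulting regions automatically respect the $\pm$ structure'' is precisely what must be proved: the paper's Lemma \ref{lemmaLocalization} shows that a cut-off built from the pseudometric $d_{\Phi}$ of (\ref{eq:projmetric}) satisfies $\Vert\Psi_i(u)-\sigma(uv)\Psi_i(v)\Vert\leq(1+2/\epsilon)\Vert\Phi(u)-\sigma(uv)\Phi(v)\Vert$, via a case distinction according to whether $d_{\Phi}(u,v)$ is realized by the difference or the sum of the normalized vectors; combined with the padded random partition statement (Lemma \ref{lemma:goodpartitions}, imported from \cite{Liu13}) this gives $k$ disjointly supported functions with $\mathcal{R}^{\sigma}\leq Ck^{6}\lambda_k(\Delta^{\sigma})$ (Lemma \ref{lemma:localReyleighquotient}). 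Second, you still need to convert each such function into a sub-bipartition; the paper does this with the threshold-set rounding Lemma \ref{lemma:basicbrick} (the same lemma that drives Theorem \ref{thmCheegerEsti}), producing $\beta^{\sigma}\leq\sqrt{2\mathcal{R}^{\sigma}}\leq Ck^{3}\sqrt{\lambda_k(\Delta^{\sigma})}$. Your proposal never mentions this rounding step, so even granting the localization, the passage from functions to the sub-bipartitions appearing in $h_k^{\sigma}(\mu_d)$ is missing.
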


This is a generalization of the higher-order Cheeger and dual Cheeger inequalities for unsigned graphs by Lee, Oveis Gharan, and Trevisan \cite{LOT2013} and the second named author \cite{Liu13}.

\subsection{The lower bound estimate} Again, the lower bound estimate in (\ref{eq:higherCHeegerIn}) is easier to show.

\begin{proof}[Proof of the lower bound estimate of Theorem \ref{HigerCheeger}]

For any  $$\{(V_{2i-1}, V_{2i})\}_{i=1}^k\in \mathrm{Pair}(k),$$ that is, any $k$-sub-bipartition
of $V$,  define for each $i$ a function
\begin{equation*}
f_i(u):=\left\{
         \begin{array}{rl}
           1, & \hbox{if $u\in V_{2i-1}$;} \\
           -1, & \hbox{if $u\in V_{2i}$;} \\
           0, & \hbox{otherwise.}
         \end{array}
       \right.
\end{equation*}
Let $f=\sum_{i=1}^ka_if_i$, where $a_1, \ldots, a_k\in \mathbb{R}$, be a function in the space $\mathrm{span}\{f_1, \ldots, f_k\}$.
Consider the signed Rayleigh quotient
\begin{equation*}
 \mathcal{R}^{\sigma}(f)=\frac{\sum_{u\sim v}w_{uv}(f(u)-\sigma(uv)f(v))^2}{\sum_{u}\mu(u)f(u)^2}
\end{equation*}
of $f$.
For the denominator, we have
\begin{equation*}
 \sum_u\mu(u)f(u)^2=\sum_{i=1}^ka_i^2\sum_u\mu(u)f_i(u)^2=\sum_{i=1}^ka_i^2\mathrm{vol}_{\mu}(V_{2i-1}\cup V_{2i}).
\end{equation*}
And for the numerator, we estimate
\begin{align*}
\sum_{u\sim v}w_{uv}&(f(u)-\sigma(uv)f(v))^2 \\
\leq  2\sum_{i=1}^ka_i^2\big(&2|E^{+}(V_{2i-1}, V_{2i})|+|E^-(V_{2i-1})|
+|E^-(V_{2i-1})|\\&+|E(V_{2i-1}\cup V_{2i}, \overline{V_{2i-1}\cup V_{2i}})|\big).
\end{align*}
Hence, we arrive at
\begin{equation}
 \max_{a_1,\ldots, a_k}\mathcal{R}^{\sigma}(f)\leq 2\max_{1\leq i\leq k}\beta^{\sigma}(V_{2i-1}, V_{2i}).
\end{equation}
By the min-max principle (\ref{keigenvalue}), this implies $\lambda_k\leq 2h_k^{\sigma}(\mu)$.
\end{proof}

\subsection{The upper bound estimate}
We next prove the remaining upper bound estimate of (\ref{eq:higherCHeegerIn}).

\subsubsection{Idea of the proof: a signed spectral clustering algorithm}\label{section:clustering}
Theorem \ref{HigerCheeger} is a mixture of the higher-order Cheeger \cite{LOT2013} and dual Cheeger \cite{Liu13} inequalities, the proofs of which utilize spectral clustering algorithms via metrics on spheres and real projective spaces, respectively. One might anticipate at first that the proper metrics for proving the upper bound estimate in Theorem \ref{HigerCheeger} are a mixture of those two kinds of metrics. It is somewhat surprising that the latter metrics \cite{Liu13} themselves are competent for the proof and provide unified spectral clustering algorithms.


We describe the \emph{signed spectral clustering algorithm} in detail. This algorithm aims at finding $k$ subsets whose
induced subgraphs are nearly balanced. The connections among those $k$ subsets, regardless of their signs, are very sparse.

Let $\{\phi_1, \phi_2, \ldots, \phi_n\}$ be an orthonormal system of eigenfunctions corresponding to $\lambda_1(\Delta^{\sigma}), \lambda_2(\Delta^{\sigma}), \ldots, \lambda_n(\Delta^{\sigma})$.

\begin{itemize}
  \item [(1)] \emph{Spectral embedding.} Using the first $k$ eigenfunctions, we obtain a coordinate system for the vertices via the map
\begin{align*}
 \Phi: V\rightarrow \mathbb{R}^k, \,\,\,v\mapsto (\phi_1(v), \phi_2(v), \ldots, \phi_k(v)).
\end{align*}
\item[(2)] \emph{Normalization.} We further map $\widetilde{V}_{\Phi}:=\{v: \Phi(v)\neq 0\}$ to the unit sphere,
\begin{align*}
 \Phi^{\mathrm{nor}}: \widetilde{V}_{\Phi}\rightarrow \mathbb{S}^{k-1}, \,\,\,v\mapsto \frac{\Phi(v)}{\Vert\Phi(v)\Vert}.
\end{align*}
  \item [(3)] \emph{Clustering the points.} We use the following pseudometric $d_{\Phi}$ on $\widetilde{V}_{\Phi}$ studied in \cite{Liu13},
\begin{equation}\label{eq:projmetric}
 d_{\Phi}(u,v):=\min\left\{\left\Vert\Phi^{\mathrm{nor}}(u)+\Phi^{\mathrm{nor}}(v)\right\Vert, \left\Vert\Phi^{\mathrm{nor}}(u)-\Phi^{\mathrm{nor}}(v)\right\Vert\right\},
\end{equation}
where $\Vert\cdot\Vert$ stands for the Euclidean norm in $\mathbb{R}^k$.
\end{itemize}
Recall that the projective space $P^{k-1}\mathbb{R}$ is obtained from $\mathbb{S}^{k-1}$ by identifying the antipodal points,
\begin{equation*}
 Pr: \mathbb{S}^{k-1} \rightarrow P^{k-1}\mathbb{R}: x, -x\mapsto [x],
\end{equation*} where $x$ are the unit vectors in $\mathbb{R}^k$.
The metric (\ref{eq:projmetric}) is induced from the following metric on $P^{k-1}\mathbb{R}$,
\begin{equation*}
 d([x],[y]):=\min\{\Vert x+y\Vert, \Vert x-y\Vert\}, \,\, \forall\,\, [x], [y]\in P^{k-1}\mathbb{R}.
\end{equation*}

If $\sigma=\sigma_+$, we have $\lambda_1(\Delta^{\sigma})=0$ and $\phi_1$ is the constant function $\phi_1\equiv 1/\sqrt{\mathrm{vol}_{\mu_d}(V)}$.
Our algorithm reduces to the traditional clustering for finding $k$ subsets, each defining a sparse cut \cite{Luxburg07,NJW2002}. A difference is that, traditionally, the spherical metric (or the radial projection distance) $d^{\mathrm{sphere}}_{\Phi}(u,v):= \left\Vert\Phi^{\mathrm{nor}}(u)-\Phi^{\mathrm{nor}}(v)\right\Vert$ is used for clustering the points, as verified by Lee, Oveis Gharan, and Trevisan \cite{LOT2013}. Here in our case, we use the metric (\ref{eq:projmetric}) instead.


If, on the other hand,  $\sigma=\sigma_-$, our algorithm  reduces to finding $k$ almost-bipartite subgraphs, since $(G, \sigma_-)$ is balanced if and only if $G$ is bipartite. This is exactly the one proposed in \cite{Liu13}.

Theorem \ref{HigerCheeger} and the proof presented below provide the worst-case performance guarantee of the algorithm described above.

\begin{Rmk}
If we use the last $k$ eigenfunctions $$\phi_{N-k+1}, \phi_{N-k+2}, \ldots, \phi_N$$ instead of the first $k$ eigenfunctions in the step of spectral embedding, we will obtain an algorithm for finding $k$ subsets whose induced subgraphs are nearly antibalanced, each defining a sparse cut.
\end{Rmk}

\begin{Rmk}
We comment here about further related works on signed spectral algorithms. For any signed graph (or subgraph), one can continue to do the next-level clustering. Roughly speaking, the objective is to find two subsets whose signed bipartiteness ratio is small. The heuristics of the spectral method for such clustering was discussed in \cite{KSLLDA,Kunegis14,CGVZ13}.
Actually, the proof of Theorem \ref{thmCheegerEsti} (especially Lemma \ref{lemma:basicbrick}) provides a theoretical guarantee for their heuristic arguments. We can achieve this clustering by the threshold sets $V_f(t):=\{u\in V: f(u)\geq t\}$ and $V_f(-t):=\{u\in V: f(u)\leq -t\}$ of a certain function $f$.

There are studies about another kind of multi-way clustering of signed networks, called the correlation clustering. It aims at finding $k$ non-trivial disjoint subsets $V_1, V_2, \ldots, V_k$ such that edges connecting two vertices from the same subset are almost all positive and edges connecting two vertices from different subsets are almost all negative. Heuristic spectral algorithms for such clustering were studied in, e.g., \cite{KLB09,KSLLDA,Kunegis14,SM13,SIMM13,CGVZ13}; for non-spectral algorithms, see, e.g., \cite{Swamy04}.
\end{Rmk}

\subsubsection{The proof}
Let $\phi_1, \phi_2, \ldots, \phi_n$ and $\Phi$ be defined as above.
Since $\lambda_i=\mathcal{R}^{\sigma}(\phi_i)$, $i=1,2,\ldots, k$, we have
\begin{equation}\label{eq:lambdakReyleigh}
 \lambda_k\geq \frac{\sum_{u\sim v}w_{uv}\Vert \Phi(u)-\sigma(uv)\Phi(v)\Vert^2}{\sum_{u\in V}\mu(u)\Vert\Phi(u)\Vert^2}=\mathcal{R}^{\sigma}(\Phi).
\end{equation}
In the following, we will find $k$ disjointly supported maps $\{\Psi_i\}_{i=1}^k$ by localizing $\Phi$, such that $\mathcal{R}^{\sigma}(\Psi_i)$ can be bounded from above by $\mathcal{R}^{\sigma}(\Phi_i)$ (up to a polynomial in $k$).

Recall the pseudometric space $(\widetilde{V}_{\Phi}, d_{\Phi})$ from the algorithm described in the last subsection. In order to localize $\Phi$, we consider the following cut-off function: For any $S_i\subseteq V$ and $\epsilon>0$, define
\begin{equation*}
 \theta_i(v)=\left\{
               \begin{array}{ll}
                 0, & \hbox{if $\Phi(v)=0$;} \\
                 \max\left\{0, \, 1-\dfrac{d_{\Phi}(v,S_i\cap \widetilde{V}_{\Phi})}{\epsilon}\right\}, & \hbox{otherwise.}
               \end{array}
             \right.
\end{equation*}
Then we can localize $\Phi$ as
\begin{equation*}
 \Psi_i:=\theta_i \Phi: V\rightarrow \mathbb{R}^k.
\end{equation*}
Observe that $\Psi_i|_{S_i}=\Phi|_{S_i}$ and $$\mathrm{supp}(\Psi_i)\subseteq N_{\epsilon}(S_i\cap \widetilde{V}_{\Phi}, d_{\Phi}):=\{v\in \widetilde{V}_{\Phi}: d_{\Phi}(v, S_i\cap \widetilde{V}_{\Phi})<\epsilon\}.$$
The following crucial lemma
is an extension of  \cite[Lemma 5.3]{Liu13} and \cite[Lemma 3.3]{LOT2013}.

\begin{Lem}\label{lemmaLocalization}
For any given $0<\epsilon <2$, define $\Psi_i$ as above. Then for any $\{u,v\}\in E$,
\begin{equation}\label{eq:Localization}
 \Vert\Psi_i(u)-\sigma(uv)\Psi_i(v)\Vert\leq \left(1+\frac{2}{\epsilon}\right)\Vert\Phi(u)-\sigma(uv)\Phi(v)\Vert.
\end{equation}
\end{Lem}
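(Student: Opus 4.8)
The plan is to reduce the vector inequality \eqref{eq:Localization} to a scalar estimate about the cut-off weights $\theta_i(u),\theta_i(v)$ and then exploit the Lipschitz-type property of $\theta_i$ with respect to the pseudometric $d_\Phi$. First I would write $\Psi_i(u)-\sigma(uv)\Psi_i(v)=\theta_i(u)\Phi(u)-\sigma(uv)\theta_i(v)\Phi(v)$ and split this as
\begin{equation*}
\theta_i(u)\bigl(\Phi(u)-\sigma(uv)\Phi(v)\bigr)+\bigl(\theta_i(u)-\theta_i(v)\bigr)\sigma(uv)\Phi(v).
\end{equation*}
Since $0\le\theta_i\le 1$ and $|\sigma(uv)|=1$, the triangle inequality gives
\begin{equation*}
\Vert\Psi_i(u)-\sigma(uv)\Psi_i(v)\Vert\le \Vert\Phi(u)-\sigma(uv)\Phi(v)\Vert+|\theta_i(u)-\theta_i(v)|\,\Vert\Phi(v)\Vert.
\end{equation*}
So the whole task is to bound $|\theta_i(u)-\theta_i(v)|\,\Vert\Phi(v)\Vert$ by $\tfrac{2}{\epsilon}\Vert\Phi(u)-\sigma(uv)\Phi(v)\Vert$.

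The key point is that $\theta_i$ is $(1/\epsilon)$-Lipschitz in the pseudometric $d_\Phi$: from the definition of $\theta_i$ as $\max\{0,1-d_\Phi(\cdot,S_i\cap\widetilde V_\Phi)/\epsilon\}$ and the triangle inequality for $d_\Phi$, one gets $|\theta_i(u)-\theta_i(v)|\le d_\Phi(u,v)/\epsilon$ whenever both $\Phi(u),\Phi(v)\neq 0$; the cases where one of them vanishes need a separate (easy) check, using that $\theta_i$ vanishes there and that $d_\Phi$ is to be interpreted appropriately. Then I would use the explicit formula \eqref{eq:projmetric},
\begin{equation*}
d_\Phi(u,v)=\min\Bigl\{\bigl\Vert\Phi^{\mathrm{nor}}(u)+\Phi^{\mathrm{nor}}(v)\bigr\Vert,\bigl\Vert\Phi^{\mathrm{nor}}(u)-\Phi^{\mathrm{nor}}(v)\bigr\Vert\Bigr\}\le\bigl\Vert\Phi^{\mathrm{nor}}(u)-\sigma(uv)\Phi^{\mathrm{nor}}(v)\bigr\Vert,
\end{equation*}
the last inequality because $\sigma(uv)\in\{+1,-1\}$ so $\Phi^{\mathrm{nor}}(u)-\sigma(uv)\Phi^{\mathrm{nor}}(v)$ is one of the two vectors over which the minimum is taken. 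Thus $|\theta_i(u)-\theta_i(v)|\,\Vert\Phi(v)\Vert\le\tfrac{1}{\epsilon}\Vert\Phi(v)\Vert\,\bigl\Vert\Phi^{\mathrm{nor}}(u)-\sigma(uv)\Phi^{\mathrm{nor}}(v)\bigr\Vert$.

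The main obstacle — and the only genuinely nontrivial inequality — is the elementary geometric fact that for any nonzero vectors $x=\Phi(u)$, $y=\Phi(v)$ in $\mathbb R^k$ and any sign $s=\sigma(uv)$,
\begin{equation*}
\Vert y\Vert\Bigl\Vert\tfrac{x}{\Vert x\Vert}-s\tfrac{y}{\Vert y\Vert}\Bigr\Vert\le 2\Vert x-s y\Vert,
\end{equation*}
which is the normalization-versus-ambient-norm comparison already present in \cite{LOT2013,Liu13}. I would prove it by replacing $y$ with $sy$ (so $s$ disappears and both sides are unchanged, using $\Vert sy\Vert=\Vert y\Vert$), reducing to the case $s=+1$: one writes $\tfrac{x}{\Vert x\Vert}-\tfrac{y}{\Vert y\Vert}=\tfrac{1}{\Vert x\Vert}(x-y)+\bigl(\tfrac{1}{\Vert x\Vert}-\tfrac{1}{\Vert y\Vert}\bigr)y$, notes $\bigl|\tfrac{1}{\Vert x\Vert}-\tfrac{1}{\Vert y\Vert}\bigr|\Vert y\Vert=\bigl|\tfrac{\Vert y\Vert-\Vert x\Vert}{\Vert x\Vert}\bigr|\le\tfrac{\Vert x-y\Vert}{\Vert x\Vert}$ by the reverse triangle inequality, and multiplies through by $\Vert y\Vert$, then bounds $\Vert y\Vert/\Vert x\Vert$ crudely — one does have to be a little careful here and should in fact symmetrize by assuming w.l.o.g.\ $\Vert x\Vert\ge\Vert y\Vert$, in which case $\Vert y\Vert/\Vert x\Vert\le 1$ and the factor $2$ drops out cleanly. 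Combining this with the Lipschitz bound yields $|\theta_i(u)-\theta_i(v)|\,\Vert\Phi(v)\Vert\le\tfrac{2}{\epsilon}\Vert\Phi(u)-\sigma(uv)\Phi(v)\Vert$, and adding back $\Vert\Phi(u)-\sigma(uv)\Phi(v)\Vert$ gives exactly the factor $1+\tfrac{2}{\epsilon}$ claimed in \eqref{eq:Localization}. Finally I would dispose of the degenerate cases: if $\Phi(u)=\Phi(v)=0$ then $\Psi_i(u)=\Psi_i(v)=0$ and there is nothing to prove; if exactly one of them, say $\Phi(v)$, is zero, then $\theta_i(v)=0$ so $\Psi_i(v)=0$ and $\Vert\Psi_i(u)-\sigma(uv)\Psi_i(v)\Vert=\theta_i(u)\Vert\Phi(u)\Vert\le\Vert\Phi(u)\Vert=\Vert\Phi(u)-\sigma(uv)\Phi(v)\Vert$, which is even stronger than \eqref{eq:Localization}.
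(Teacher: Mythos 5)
Your proposal follows essentially the same route as the paper's proof: the same splitting $\Psi_i(u)-\sigma(uv)\Psi_i(v)=\theta_i(u)\left(\Phi(u)-\sigma(uv)\Phi(v)\right)+\left(\theta_i(u)-\theta_i(v)\right)\sigma(uv)\Phi(v)$, the $1/\epsilon$-Lipschitz property of $\theta_i$ with respect to $d_\Phi$, and a comparison between normalized and unnormalized differences, with the degenerate cases and the sign-absorption step $y\mapsto\sigma(uv)y$ handled correctly. Your one real variation is a pleasant streamlining: by bounding $d_\Phi(u,v)\le\Vert\Phi^{\mathrm{nor}}(u)-\sigma(uv)\Phi^{\mathrm{nor}}(v)\Vert$ (the minimum in \eqref{eq:projmetric} runs over both signs), you collapse the paper's two-case analysis --- the paper treats separately $\langle\Phi(u),\Phi(v)\rangle\ge 0$, where it uses the Lipschitz bound, and $\langle\Phi(u),\Phi(v)\rangle\le 0$, where it only uses $|\theta_i(u)-\theta_i(v)|\le 1$ together with $\Vert\Phi(v)\Vert\le\Vert\Phi(u)-\Phi(v)\Vert$ --- into one elementary inequality.

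However, one step fails as written: the ``w.l.o.g.\ $\Vert x\Vert\ge\Vert y\Vert$'' inside your proof of $\Vert y\Vert\left\Vert\frac{x}{\Vert x\Vert}-\frac{y}{\Vert y\Vert}\right\Vert\le 2\Vert x-y\Vert$. That inequality is not symmetric under swapping $x$ and $y$: the prefactor is $\Vert y\Vert=\Vert\Phi(v)\Vert$, and the vertex $v$ was already singled out by your decomposition (you pulled out $\theta_i(u)$, not $\theta_i(v)$), so you cannot simply relabel; in the case $\Vert y\Vert>\Vert x\Vert$ your computation only yields the factor $2\Vert y\Vert/\Vert x\Vert$, which can be arbitrarily large. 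The inequality is nevertheless true in that case, and the repair is one line: for $\Vert y\Vert>\Vert x\Vert$ write $\frac{x}{\Vert x\Vert}-\frac{y}{\Vert y\Vert}=\frac{1}{\Vert y\Vert}(x-y)+\left(\frac{1}{\Vert x\Vert}-\frac{1}{\Vert y\Vert}\right)x$ and use the reverse triangle inequality, so that in all cases $\left\Vert\frac{x}{\Vert x\Vert}-\frac{y}{\Vert y\Vert}\right\Vert\le\frac{2\Vert x-y\Vert}{\max\{\Vert x\Vert,\Vert y\Vert\}}$. Alternatively, invoke the symmetry of \eqref{eq:Localization} in $u$ and $v$ \emph{before} the decomposition, assuming $\Vert\Phi(u)\Vert\ge\Vert\Phi(v)\Vert$ from the outset; then your argument goes through verbatim. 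Note that the paper's own estimate, $d_\Phi(u,v)\Vert\Phi(v)\Vert=\left\Vert\frac{\Vert\Phi(v)\Vert}{\Vert\Phi(u)\Vert}\Phi(u)-\Phi(v)\right\Vert\le\Vert\Phi(u)-\Phi(v)\Vert+\left|\Vert\Phi(v)\Vert-\Vert\Phi(u)\Vert\right|\le 2\Vert\Phi(u)-\Phi(v)\Vert$, is exactly this comparison in a form that needs no assumption on which norm is larger.
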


\begin{proof}
If either $\Phi(u)$ or $\Phi(v)$ vanishes, (\ref{eq:Localization}) follows from the fact that $|\theta_i|\leq 1$. So we only need to prove (\ref{eq:Localization}) for $u,v\in \widetilde{V}_{\Phi}$. In this case, we calculate
\begin{align}
\Vert\Psi_i(u)&-\sigma(uv)\Psi_i(v)\Vert =\Vert\theta_i(u)\Phi(u)-\sigma(uv)\theta_i(v)\Phi(v)\Vert\notag\\
& \leq |\theta_i(u)| \, \Vert\Phi(u)-\sigma(uv)\Phi(v)\Vert + |\theta_i(u)-\theta_i(v)| \, \Vert\Phi(v)\Vert.\label{eq:preclain}
\end{align}
We claim that
\begin{equation}\label{eq:claim}
 |\theta_i(u)-\theta_i(v)| \, \Vert\Phi(v)\Vert\leq\frac{2}{\epsilon}\Vert\Phi(u)-\sigma(uv)\Phi(v)\Vert.
\end{equation}
Note that (\ref{eq:Localization}) follows immediately from (\ref{eq:preclain}) and (\ref{eq:claim}).
Hence, the remaining task is to prove (\ref{eq:claim}). Similarly to the beginning of the proof of Lemma \ref{lemmaDisc1}, it is enough to show
\begin{equation}\label{eq:afterclaim}
 |\theta_i(u)-\theta_i(v)| \, \Vert\Phi(v)\Vert\leq\frac{2}{\epsilon}\Vert\Phi(u)-\Phi(v)\Vert
\end{equation}
for any two vectors $\Phi(u), \Phi(v)\in \mathbb{R}^k\setminus\{0\}$.
\begin{description}
  \item[Case 1] The edge $\{u,v\}$ satisfies $$d_{\Phi}(u,v)=\left\Vert\frac{\Phi(u)}{\Vert \Phi(u)\Vert}-\frac{\Phi(v)}{\Vert \Phi(v)\Vert}\right\Vert.$$
       In this case $\langle \Phi(u), \Phi(v)\rangle\geq 0$, where $\langle\cdot, \cdot\rangle$ stands for the inner product of  $\mathbb{R}^k$. We estimate
\begin{align*}
 |\theta_i(u)-\theta_i(v)|\Vert\Phi(v)\Vert&\leq\frac{1}{\epsilon}d_{\Phi}(u,v)\Vert\Phi(v)\Vert=\frac{1}{\epsilon}\left\Vert\frac{\Vert \Phi(v)\Vert}{\Vert \Phi(u)\Vert}\Phi(u)-\Phi(v)\right\Vert\\
& \leq  \frac{1}{\epsilon}\Vert \Phi(u)-\Phi(v)\Vert+\frac{1}{\epsilon}\left\Vert\frac{\Vert \Phi(v)\Vert}{\Vert \Phi(u)\Vert}\Phi(u)-\Phi(u)\right\Vert\\
& \leq \frac{1}{\epsilon}\Vert \Phi(u)-\Phi(v)\Vert+\frac{1}{\epsilon}\left|\Vert \Phi(v)\Vert-\Vert\Phi(u)\Vert\right|\\
&\leq\frac{2}{\epsilon}\Vert\Phi(u)-\Phi(v)\Vert.
\end{align*}
  \item[Case 2] The edge $\{u,v\}\in E$ satisfies $$d_{\Phi}(u,v)=\left\Vert\frac{\Phi(u)}{\Vert \Phi(u)\Vert}+\frac{\Phi(v)}{\Vert \Phi(v)\Vert}\right\Vert.$$
In this case $\langle \Phi(u), \Phi(v)\rangle\leq 0$. Thus,
\begin{equation*}
|\theta_i(u)-\theta_i(v)| \, \Vert\Phi(v)\Vert\leq\Vert \Phi(v)\Vert\leq \Vert \Phi(u)-\Phi(v)\Vert.
\end{equation*}
\end{description}
This completes the proof of (\ref{eq:afterclaim}).
\end{proof}

 In fact, we can find $k$ disjoint subsets of $\widetilde{V}_{\Phi}$ with good properties.
\begin{Lem}\label{lemma:goodpartitions}
There exist $k$ non-empty, mutually disjoint subsets $$S_1, S_2, \ldots, S_k\subseteq \widetilde{V}_{\Phi}$$ and an absolute constant $C_0>1$, such that
\begin{itemize}
  \item for any $1\leq i\neq j\leq k$, \begin{equation}
  d_{\Phi}(S_i, S_j)\geq \frac{1}{C_0k^{\frac{5}{2}}};
\end{equation}
  \item for any $1\leq i\leq k$,
\begin{equation}
 \sum_{u\in S_i}\mu(u)\Vert\Phi(u)\Vert^2\geq\frac{1}{2k} \sum_{u\in V}\mu(u)\Vert\Phi(u)\Vert^2.
\end{equation}
\end{itemize}
\end{Lem}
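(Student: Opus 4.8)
The plan is to pass to the finite measure space $(\widetilde{V}_{\Phi},d_{\Phi})$ weighted by $\nu(u):=\mu(u)\Vert\Phi(u)\Vert^2$, and to reduce the statement to a padded random partition argument in the style of Lee--Oveis Gharan--Trevisan \cite{LOT2013} and of \cite{Liu13}. The first step is to record the two structural facts that drive everything. From the orthonormality $(\phi_i,\phi_j)_{\mu}=\delta_{ij}$ one gets the total-mass identity $\sum_{u}\nu(u)=\sum_{i=1}^k(\phi_i,\phi_i)_{\mu}=k$, so the target lower bound becomes $\nu(S_i)\geq\frac12=\frac{1}{2k}\,\nu(\widetilde{V}_{\Phi})$. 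The same orthonormality gives the \emph{isotropy} of the normalized embedding, $\sum_{u}\nu(u)\langle\Phi^{\mathrm{nor}}(u),x\rangle^2=\Vert x\Vert^2$ for every $x\in\mathbb{R}^k$; combined with the identity $d_{\Phi}(u,[x])\leq r\Leftrightarrow|\langle\Phi^{\mathrm{nor}}(u),x\rangle|\geq 1-\frac{r^2}{2}$, this yields the \emph{non-concentration bound} $\nu(\{u:d_{\Phi}(u,[x])\leq r\})\leq(1-\frac{r^2}{2})^{-2}$ for all $0\leq r<\sqrt{2}$ and $[x]\in P^{k-1}\mathbb{R}$. Intuitively, $\nu$ is spread out in every direction; this is what forbids all the mass from sitting in fewer than $k$ clumps, and it is the only place where "the $\phi_i$ are eigenfunctions" is used.

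Next I would invoke the padded random partition machinery. The space $(\widetilde{V}_{\Phi},d_{\Phi})$, being a metric subspace of $P^{k-1}\mathbb{R}$, is a doubling metric space with doubling constant $2^{O(k)}$; hence for any chosen scale $\Delta$ there is a random partition of $\widetilde{V}_{\Phi}$ into clusters of $d_{\Phi}$-diameter at most $\Delta$ such that each point $u$ is $(\gamma\Delta)$-padded — its $d_{\Phi}$-ball of radius $\gamma\Delta$ lies inside its own cluster — with probability at least $\frac12$, with padding parameter $\gamma=\Omega(1/k)$ (at the cost of a $1/\log k$ factor in $\gamma$ the padding probability can be pushed to $1-O(1/\log k)$). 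In any realization the padded portions of distinct clusters are pairwise $(\gamma\Delta)$-separated, so any union of padded portions of clusters is $(\gamma\Delta)$-separated from any disjoint such union. The key point enabling the rest is that \emph{only mutual separation of the $S_i$ matters} for the later localization step (Lemma~\ref{lemmaLocalization}), not the diameters of the $S_i$.

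Then I would run this recursively over geometrically decreasing scales $\Delta_0>\Delta_0/2>\cdots$ with $\Delta_0$ an absolute constant, descending to a finest scale $\Theta(k^{-3/2})$. At each level one passes to a realization in which the padded mass is at least $(1-O(1/\log k))$ times the mass being partitioned (Markov), promotes the padded portions of the clusters to the next level, and discards only the boundary mass; the non-concentration bound caps the $\nu$-mass of any diameter-$\Delta$ cluster by $(1-\frac{\Delta^2}{2})^{-2}$, which is how one controls both that no clump is "too big to split" and that the total boundary loss accumulated over the $O(\log k)$ levels stays below $\frac12\,\nu(\widetilde{V}_{\Phi})=k/2$. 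What survives at the bottom is a family of tiny, pairwise $\Omega(k^{-5/2})$-separated atoms — the factor $k^{-5/2}$ is $\gamma$ times the finest scale $\Theta(k^{-3/2})$ — of total $\nu$-mass $\geq k/2$, each of $\nu$-mass $\lesssim 1$. Greedily grouping these atoms into $k$ bins (legitimate precisely because diameters are irrelevant, only mutual separation being preserved under unions) produces $k$ disjoint sets $S_1,\dots,S_k\subseteq\widetilde{V}_{\Phi}$, pairwise $d_{\Phi}$-separated by $\frac{1}{C_0k^{5/2}}$ and each of $\nu$-mass $\geq\frac12$, which is exactly the claim.

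The main obstacle is this bottom-of-the-recursion bookkeeping: keeping the surviving separation above $\frac{1}{C_0k^{5/2}}$ while guaranteeing that \emph{all} $k$ bins are heavy. Both extreme regimes must be defeated simultaneously — when $\nu$ concentrates on a few clumps one is forced deep into the recursion, where the separation could decay, whereas when $\nu$ is diffuse no single cluster is heavy, and it is the freedom to let each $S_i$ be a union of many small well-separated atoms that rescues the mass bound. Tracking how the discarded boundary mass compounds across the levels against the budget $\nu(\widetilde{V}_{\Phi})=k$, with the padding probabilities tuned so that the resulting geometric loss is a fixed fraction, is the technical heart; it is carried out for the sphere in \cite{LOT2013} and for the projective metric in \cite{Liu13}, and the adaptation needed here is essentially cosmetic — replacing $\mathbb{S}^{k-1}$ by $P^{k-1}\mathbb{R}$ — since all sign-specific content has already been absorbed into Lemma~\ref{lemmaLocalization}.
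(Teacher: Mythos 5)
Your setup is the right one, and it is the same family of argument the paper itself relies on: the paper does not reprove this lemma but, observing that the signature plays no role, refers to \cite[Section 6]{Liu13}, i.e.\ precisely the padded-random-partition argument for the projective metric adapted from \cite{LOT2013}. Your structural inputs are correct: orthonormality gives $\sum_u\mu(u)\Vert\Phi(u)\Vert^2=k$ and the isotropy identity, hence the non-concentration bound $\nu\big(\{u:\,d_\Phi(u,[x])\le r\}\big)\le (1-r^2/2)^{-2}$ for $\nu(u)=\mu(u)\Vert\Phi(u)\Vert^2$, and you are right that only the mutual separation of the $S_i$, not their diameters, is what Lemma \ref{lemmaLocalization} needs.

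The gap is in the extraction step, and it is genuine rather than cosmetic. The invariants you carry to the bottom of the recursion --- surviving $\nu$-mass at least $k/2$, indivisible atoms of mass at most about $1$, pairwise separation $\Omega(k^{-5/2})$ --- do not imply the conclusion: atoms cannot be split (two pieces of one atom need not be separated), and a family of roughly $k/2$ surviving atoms each of mass close to $1$ is consistent with everything you have established at that point, yet admits at most about $k/2$ groups of mass $\ge \tfrac12$. Even tightening the loss budget from $k/2$ to $\tfrac12$ does not save the greedy grouping by pure counting: $k-1$ atoms of mass $1+\tfrac1{2k}$ plus dust of total mass $\tfrac1{2k}$ has total mass $k-\tfrac12$ and respects the per-atom cap, but cannot be partitioned into $k$ groups of mass $\ge\tfrac12$. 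In addition, your probability accounting does not reach the stated constants: padding probability $\tfrac12$ per scale over $\Theta(\log k)$ scales retains only a $k^{-\Theta(1)}$ fraction of the mass, while boosting it to $1-O(1/\log k)$ costs a $\log k$ factor in $\gamma$ and hence in the final separation, which cannot be absorbed into the absolute constant $C_0$. To make the argument close one must keep the total discarded mass at the level of a constant (e.g.\ per-point padding failure of order $1/k$ at a single well-chosen scale, which is where an extra factor of $k$ in the separation comes from), and one must additionally exploit the geometry of heavy clusters --- by non-concentration, a cluster of mass exceeding $1$ has diameter $\gtrsim k^{-1/2}\gg k^{-5/2}$ and can itself be split into separated pieces --- since the counting example above shows that mass bookkeeping alone cannot produce $k$ groups. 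This is exactly the content of \cite[Section 6]{Liu13} (following \cite{LOT2013}) that you label ``essentially cosmetic''; as written, your passage from the partition to the $k$ sets would fail.
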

The proof of Lemma \ref{lemma:goodpartitions} employs the padded random partition theory on $(\widetilde{V}_{\Phi}, d_{\Phi})$.
Since the signature plays no role in this lemma, we refer to \cite[Section 6]{Liu13} for the proof.
\begin{Rmk}\label{rmk:C0}
In fact, from the arguments in \cite[Proof of Theorem 6.1]{Liu13}, we can set $C_0=768\left(\log_2\pi-\frac{1}{2}\right)$.
\end{Rmk}

Combining Lemma \ref{lemmaLocalization} and Lemma \ref{lemma:goodpartitions} leads to the following result.

\begin{Lem}\label{lemma:localReyleighquotient}
For any $k \in \{1,2,\dots, n\}$, there exist $k$ disjointly supported functions $\psi_1, \psi_2, \ldots, \psi_k: V\rightarrow \mathbb{R}$ such that for each $1\leq i\leq k$,
\begin{equation}\label{eq:5.9}
 \mathcal{R}^{\sigma}(\psi_i)\leq Ck^6\mathcal{R}^{\sigma}(\Phi),
\end{equation}
where $C$ is an absolute constant.
\end{Lem}

\begin{proof} Let $\{\theta_i\}_{i=1}^k$ be the $k$ cut-off functions corresponding to $\{S_i\}_{i=1}^k$ obtained in Lemma \ref{lemma:goodpartitions},
 and set $\epsilon=1/(2C_0k^{\frac{5}{2}})$. For each $i$, define $\Psi_i=\theta_i  F$. By Lemma \ref{lemmaLocalization},
\begin{equation*}
 \sum_{u\sim v}w_{uv}\Vert\Psi_i(u)-\sigma(uv)\Psi_i(v)\Vert^2\leq \left(1+\frac{2}{\epsilon}\right)^2\sum_{u\sim v}w_{uv}\Vert\Phi(u)-\sigma(uv)\Phi(v)\Vert^2.
\end{equation*}
By Lemma \ref{lemma:goodpartitions},
\begin{equation*}
 \sum_{u\in V}\mu(u)\Vert\Psi_i(u)\Vert^2\geq\frac{1}{2k}\sum_{u\in V}\mu(u)\Vert\Phi(u)\Vert^2.
\end{equation*}
Therefore,
\begin{equation}\label{eq:5.12}
 \mathcal{R}^{\sigma}(\Psi_i)\leq 2k(1+2C_0k^{\frac{5}{2}})^2\mathcal{R}^{\sigma}(\Phi)\leq Ck^6\mathcal{R}^{\sigma}(\Phi).
\end{equation}
Let us write $\Psi_i=(\Psi_i^1, \Psi_i^2, \ldots, \Psi_i^k): V\rightarrow \mathbb{R}^k$. Notice that we can always find $j_0\in \{1,2,\ldots,k\}$ such that
\begin{equation*}
 \mathcal{R}^{\sigma}(\Psi_i^{j_0})\leq \mathcal{R}^{\sigma}(\Psi_i).
\end{equation*}
Setting $\psi_i:=\Psi_i^{j_0}$, $1\leq i\leq k$ yields (\ref{eq:5.9}).
\end{proof}
\begin{proof}[Proof of the upper bound estimate in Theorem \ref{HigerCheeger}]
Let us assign $\mu=\mu_d$. Then combining Lemma \ref{lemma:basicbrick}, (\ref{eq:lambdakReyleigh}) and Lemma \ref{lemma:localReyleighquotient} together leads to the estimate
 $$h_k^\sigma(\mu_d)\leq \sqrt{2C} k^3\sqrt{\lambda_k(\Delta^\sigma)},$$
 where $C$ is the constant in (\ref{eq:5.9}).
This proves the upper bound estimate in (\ref{eq:higherCHeegerIn}).
\end{proof}
\begin{Rmk}
By (\ref{eq:5.12}), we observe that we can set the constant $C$ in in (\ref{eq:5.9}) to be $C=2(2C_0+1)^2$. Then the proof above actually leads to
$$h_k^\sigma(\mu_d)\leq 2(2C_0+1) k^3\sqrt{\lambda_k(\Delta^\sigma)}.$$
Inserting the constant from Remark \ref{rmk:C0}, we have
\begin{equation}
h_k^\sigma(\mu_d)\leq 3540 k^3\sqrt{\lambda_k(\Delta^\sigma)}.
\end{equation}
In this way, we can also get explicit constants in Theorem \ref{HigerCheegerNonnormal}, Theorem \ref{ImprChIntroHiger}, Corollary \ref{CorImprChNonHiger}, in terms of $C_0$ from Remark \ref{rmk:C0}. As those constant are apparently not optimal, we will not pursue that in this paper.
\end{Rmk}

If we instead assign $\mu=\mu_1$ in the above proof, we obtain the following estimate for $L^{\sigma}$:

\begin{Thm}\label{HigerCheegerNonnormal}
There exists an absolute constant $C$ such that for any signed graph $\Gamma=(G,\sigma)$ and any $k \in \{1,2,\dots, N\}$,
\begin{equation}\label{eq:higherCHeegerInnonnormal}
\frac{\lambda_k(L^{\sigma})}{2}\leq h_k^{\sigma}(\mu_1)\leq Ck^3\sqrt{d_{\max}\lambda_k(L^{\sigma})}.
\end{equation}
\end{Thm}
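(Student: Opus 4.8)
The plan is to run the proof of Theorem~\ref{HigerCheeger} verbatim, tracking the single place where the vertex measure enters quantitatively — namely the factor $d_\mu^w := \max_u\{\sum_{v,v\sim u} w_{uv}/\mu(u)\}$ produced by Lemma~\ref{lemma:basicbrick} — and then specializing to $\mu=\mu_1\equiv 1$. For this choice the operator $\Delta^\sigma$ becomes $L^\sigma$, the inner product $(\cdot,\cdot)_{\mu_1}$ is the standard Euclidean one, and $d_{\mu_1}^w=\max_u\sum_{v,v\sim u}w_{uv}=\max_u d_u=d_{\max}$. For the lower bound I would simply observe that the test-function computation preceding the inequality $\lambda_k\le 2h_k^\sigma(\mu)$ in Section~5 is valid for an arbitrary positive vertex measure; applying it with $\mu=\mu_1$, so that the relevant eigenvalue in the min-max principle~\eqref{keigenvalue} is $\lambda_k(L^\sigma)$, gives $\lambda_k(L^\sigma)/2\le h_k^\sigma(\mu_1)$ at once.

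For the upper bound I would proceed exactly as in Section~5. Let $\phi_1,\dots,\phi_N$ be an orthonormal (with respect to $(\cdot,\cdot)_{\mu_1}$) system of eigenfunctions of $L^\sigma$ and form $\Phi=(\phi_1,\dots,\phi_k)\colon V\to\mathbb{R}^k$, so that $\mathcal{R}^\sigma(\Phi)\le\lambda_k(L^\sigma)$ by~\eqref{eq:lambdakReyleigh}. Lemmas~\ref{lemmaLocalization}, \ref{lemma:goodpartitions} and \ref{lemma:localReyleighquotient} are stated for a general measure $\mu$ and do not depend on its choice, so they yield $k$ disjointly supported functions $\psi_1,\dots,\psi_k\colon V\to\mathbb{R}$ with $\mathcal{R}^\sigma(\psi_i)\le Ck^6\mathcal{R}^\sigma(\Phi)\le Ck^6\lambda_k(L^\sigma)$ for each $i$. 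Applying Lemma~\ref{lemma:basicbrick} to each $\psi_i$ with $\mu=\mu_1$ produces $t_i'$ and a sub-bipartition $(V_{\psi_i}(\sqrt{t_i'}),V_{\psi_i}(-\sqrt{t_i'}))$ of $\mathrm{supp}(\psi_i)$ with
$$\beta^\sigma\bigl(V_{\psi_i}(\sqrt{t_i'}),V_{\psi_i}(-\sqrt{t_i'})\bigr)\le\sqrt{2d_{\max}\,\mathcal{R}^\sigma(\psi_i)}\le\sqrt{2C}\,k^3\sqrt{d_{\max}\lambda_k(L^\sigma)}.$$
Since the $\psi_i$ have pairwise disjoint supports, these $k$ sub-bipartitions together form a $k$-sub-bipartition of $V$, so by Definition~\ref{def:multiwayCheeger} the maximum of their signed bipartiteness ratios is an upper bound for $h_k^\sigma(\mu_1)$; absorbing $\sqrt{2C}$ into a new absolute constant $C$ gives~\eqref{eq:higherCHeegerInnonnormal}.

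The step I expect to require the most care is bookkeeping rather than analysis: confirming that passing to $\mu=\mu_1$ genuinely changes nothing in Lemmas~\ref{lemmaLocalization}--\ref{lemma:localReyleighquotient} (the cut-off construction, the padded-partition Lemma~\ref{lemma:goodpartitions}, and the ``choose the best coordinate'' reduction all operate on the pseudometric space $(\widetilde V_\Phi,d_\Phi)$ and on the Euclidean norms $\|\Phi(u)\|$, with $\mu$ entering only through the common weights $\mu(u)\|\Phi(u)\|^2$ in numerator and denominator of the Rayleigh quotients), and that the threshold sets produced by Lemma~\ref{lemma:basicbrick} lie inside $\mathrm{supp}(\psi_i)$, so that disjointness of supports is inherited by the resulting sub-bipartitions. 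Once these routine checks are in place, the argument is identical to that of Theorem~\ref{HigerCheeger} with $d_{\max}$ playing the role of the implicit constant $d_{\mu_d}^w=1$.
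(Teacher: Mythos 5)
Your proposal is correct and follows essentially the same route as the paper: the paper proves Theorem~\ref{HigerCheeger} for a general measure $\mu$ and obtains Theorem~\ref{HigerCheegerNonnormal} by simply setting $\mu=\mu_1$, whereupon $d_{\mu}^{w}$ in Lemma~\ref{lemma:basicbrick} becomes $d_{\max}$, exactly as you track. Your explicit check that one may take $t'>0$ so the threshold sets stay inside $\mathrm{supp}(\psi_i)$ is a detail the paper leaves implicit, but it is the same argument.
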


\section{Signed Cheeger constants and higher order spectral gaps}

A natural question is, whether the order of $\lambda_1(\Delta^{\sigma})$ on the right hand side of the signed Cheeger inequality (\ref{CheegerEsti}) can be improved to be $1$. Extending the ideas of Kwok et al.~\cite{KLLOT2013}, we answer this question by the following theorem.

\begin{Thm}\label{ImprChIntro}
For any signed graph $\Gamma=(G,\sigma)$ and any $k \in \{1,2,\dots,n\}$,
\begin{equation} \label{qq22}
 h_1^{\sigma}(\mu_d)<16\sqrt{2}k\frac{\lambda_1(\Delta^{\sigma})}{\sqrt{\lambda_k(\Delta^{\sigma})}}.
\end{equation}
\end{Thm}

In other words, when there exists a $k$ such that the gap between $\lambda_1$ and $\lambda_k$ is large, one can improve the order of $\lambda_1$ on the right hand side of (\ref{CheegerEsti}) to be $1$. Actually, a slightly stronger version of \eqref{qq22} can be proved:



\begin{Thm} \label{ImprCh}
Given a signed graph $\Gamma=(G,\sigma)$ and $k \in \{1,2,\dots, N\}$, at least one of the following holds,
\begin{equation}
\text{(i). } h_1^{\sigma}(\mu_d)\leq 8k\lambda_1(\Delta^{\sigma}); \quad
\text{(ii). } h_1^{\sigma}(\mu_d)< 16\sqrt{2}k\frac{\lambda_1(\Delta^{\sigma})}{\sqrt{\lambda_k(\Delta^{\sigma})}}.
\end{equation}
\end{Thm}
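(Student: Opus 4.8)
The plan is to mimic the argument of Kwok, Lau, Lee, Oveis Gharan, and Trevisan \cite{KLLOT2013}, but carried out on the spectral embedding adapted to the signed setting so that it is stable under switching. Let $\phi_1$ be a first eigenfunction of $\Delta^\sigma$ with $\mathcal{R}^\sigma(\phi_1)=\lambda_1(\Delta^\sigma)$, normalized so that $\max_u \phi_1(u)^2=1$. For a parameter $t\in[0,1]$ let $Y(t)\in\{-1,0,1\}^V$ be the signed threshold vector from Section~4 (the one used in Lemma~\ref{lemmaDisc1} and Lemma~\ref{lemma:basicbrick}), so that $\sum_u \mu_d(u)|Y_u(t)|=\mathrm{vol}_{\mu_d}(V_f(\sqrt t)\cup V_f(-\sqrt t))$ and $\sum_{u\sim v}w_{uv}|Y_u(t)-\sigma(uv)Y_v(t)|$ equals the numerator of $\beta^\sigma$ at the corresponding sub-bipartition. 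The dichotomy in the statement will come from a case analysis on how ``spread out'' the distribution of $\phi_1^2$ is among its level sets — concretely, whether a single threshold sub-bipartition already certifies case (i), or whether the mass is spread over many levels, in which case one plays the higher eigenvalue $\lambda_k(\Delta^\sigma)$ against this spread via a counting/pigeonhole argument on $k$ disjointly supported test functions.

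The key steps, in order, would be: (1) Set up the ``signed sweep'': record for each $t$ the quantity $\beta^\sigma(V_{\phi_1}(\sqrt t),V_{\phi_1}(-\sqrt t))$ and observe, via Lemma~\ref{lemmaDisc1} together with Cauchy--Schwarz exactly as in the derivation of Lemma~\ref{lemma:basicbrick}, that the $\mu_d$-weighted average of these $\beta^\sigma$ values over $[0,1]$ is $\leq\sqrt{2\lambda_1(\Delta^\sigma)}$ — but now I want the \emph{localized} version, tracking the contribution level-by-level rather than just taking one good threshold. (2) Define, for a threshold $\tau$, the ``tail'' test functions obtained by restricting $\phi_1$ to $\{|\phi_1|\ge\tau\}$ and to $\{|\phi_1|<\tau\}$ (with the sign structure of $Y$ built in), and show that if $h_1^\sigma(\mu_d)$ is \emph{large} — i.e. case (i) fails — then no single threshold sub-bipartition is sparse, which forces the ``sweep'' mass to be distributed over $\gtrsim 1/(k\lambda_1)$ worth of disjoint level-set annuli. (3) From $\gtrsim k$ such disjoint annuli, manufacture $k$ disjointly supported functions $\psi_1,\dots,\psi_k$ out of $\phi_1$, estimate each $\mathcal{R}^\sigma(\psi_i)$ — here Lemma~\ref{lemmaLocalization}-type cut-off estimates (or directly the localization inequality of Section~5) control the Rayleigh quotient blow-up by a factor polynomial in $k$ — and invoke Lemma~\ref{lemmaPreliminary} to get $\lambda_k(\Delta^\sigma)\le 2\max_i\mathcal{R}^\sigma(\psi_i)$, which upon rearrangement yields the bound $h_1^\sigma(\mu_d)<16\sqrt2\,k\,\lambda_1(\Delta^\sigma)/\sqrt{\lambda_k(\Delta^\sigma)}$ of case (ii). (4) Chase the constants: the $8k$ and $16\sqrt2\,k$ are what fall out of the pigeonhole step (splitting the sweep into $k$ pieces loses a factor $k$) combined with the $\sqrt2$ from the Cauchy--Schwarz step (the same $\sqrt2$ appearing in \eqref{CheegerEsti}) and a factor from the annulus cut-offs.

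I would carry out the dichotomy precisely as: either there exists $t$ with $\beta^\sigma(V_{\phi_1}(\sqrt t),V_{\phi_1}(-\sqrt t))\le 8k\lambda_1(\Delta^\sigma)$ — in which case case (i) holds outright by \eqref{defSCheeger} — or every threshold has $\beta^\sigma > 8k\lambda_1(\Delta^\sigma)$. In the latter regime, a telescoping estimate on the sweep (using that the total ``sweep integral'' is bounded by something of order $\sqrt{\lambda_1}$, but the integrand is pointwise $> 8k\lambda_1$) shows the support of $\phi_1^2$ must split into many disjoint level bands each carrying a definite fraction of the mass; feeding $k$ of these bands into Lemma~\ref{lemmaPreliminary} produces $\lambda_k(\Delta^\sigma) \lesssim k^? \lambda_1(\Delta^\sigma)\cdot(\text{something})$, and tracking the ``something'' against the failed threshold bound gives case (ii).

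The main obstacle I anticipate is the bookkeeping in step (2)--(3): one must be careful that the disjointly supported functions extracted from a single eigenfunction $\phi_1$ really are disjointly supported (the signed threshold vectors $Y(t)$ at nested levels are \emph{not} disjointly supported, so one must pass to \emph{annuli} $\{\tau_j\le|\phi_1|<\tau_{j+1}\}$ and restrict $\phi_1$ there), and that the Rayleigh-quotient cost of this restriction is only a bounded factor — this is exactly where the signed localization lemma (the analogue of Lemma~\ref{lemmaLocalization}, but for sharp cut-offs to annuli rather than the padded-partition cut-offs $\theta_i$) does the work, and getting the boundary edges straddling two annuli under control, with the sign $\sigma(uv)$ correctly accounted for, is the delicate point. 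The fact that everything is phrased through $\mathcal{R}^\sigma$ and the dual quantity $\|\Phi(u)-\sigma(uv)\Phi(v)\|$ — which already appeared to behave well under the ``reduce to $\sigma(uv)f(v)$'' trick in the proofs of Lemmas~\ref{lemmaDisc1} and \ref{lemmaLocalization} — strongly suggests the unsigned argument of \cite{KLLOT2013} transfers essentially verbatim once this reduction is invoked at each use of an inequality between two real numbers.
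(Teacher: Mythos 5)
Your high-level plan (a dichotomy, then $k$ disjointly supported test functions fed into Lemma~\ref{lemmaPreliminary}) is the right shape, but the mechanism you propose for producing those functions has a genuine gap. Restricting $\phi_1$ sharply to annuli $\{\tau_j\le|\phi_1|<\tau_{j+1}\}$ does not give Rayleigh quotients controlled by $\mathcal{R}^{\sigma}(\phi_1)$: an edge $\{u,v\}$ whose endpoints lie in adjacent annuli with $|\phi_1(u)|,|\phi_1(v)|$ both large and $\phi_1(u)\approx\sigma(uv)\phi_1(v)$ contributes roughly $\phi_1(u)^2+\phi_1(v)^2$ to the numerators of the restricted functions while contributing almost nothing to $\sum w_{uv}|\phi_1(u)-\sigma(uv)\phi_1(v)|^2$, so no bound of the form $\mathcal{R}^{\sigma}(\psi_i)\lesssim \mathrm{poly}(k)\,\lambda_1$ can follow. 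Moreover, the tool you invoke to repair this, a ``Lemma~\ref{lemmaLocalization}-type'' cut-off, cannot help here: for the one-dimensional embedding $\Phi=\phi_1$ the normalized map takes values in $\{\pm1\}\subset\mathbb{S}^0$, so the pseudometric $d_{\Phi}$ of (\ref{eq:projmetric}) vanishes identically on $\widetilde V_{\Phi}$ and cut-offs built from it cannot separate annuli of $|\phi_1|$. The paper (following Kwok et al.) avoids restriction altogether: on each band $[t_{i-1},t_i]$ the disjointly supported function is $u\mapsto\pm\,|f(u)-\psi_{t_{i-1},t_i}(f(u))|$, i.e.\ the distance of $f(u)$ to the nearer band endpoint; these vanish at band boundaries, and Claim~\ref{claim3} gives $\sum_i|f_i(u)-\sigma(uv)f_i(v)|^2\le|f(u)-\sigma(uv)f(v)|^2$ edge by edge, with no localization loss, the sign being handled by the same ``replace $f(v)$ by $\sigma(uv)f(v)$'' reduction you mention.

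The dichotomy itself is also not the one you describe, and your second branch is not carried out. In the paper the thresholds $0=t_0\le\dots\le t_{2k}=\max|f|$ are chosen so that each band absorbs a fixed amount $C$ of the \emph{approximation error} $\|f-g\|_{\mu}^2$ (not a fixed amount of sweep mass or of $\mu$-measure of an annulus); case (i) is the event that this allocation terminates within $2k$ steps, and it yields $h_1^{\sigma}\le 8k\lambda_1$ not from a single good threshold of the plain sweep of $\phi_1$ but from applying the $\ell^1$-sweep (Lemma~\ref{lemmaImprCheeger}) to the auxiliary function $F(u)=\int_0^{f(u)}\eta$, with Proposition~\ref{satzF} converting this into $\beta^{\sigma}\le 4k\mathcal{R}^{\sigma}(f)+4\sqrt2\,k\frac{\|f-g\|_{\mu}}{\|f\|_{\mu}}\sqrt{\mathcal{R}^{\sigma}(f)}$ (Lemma~\ref{preImprCheeger2}). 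Your proposed sweep, the $\sqrt t$-threshold vectors $Y(t)$ of Lemma~\ref{lemmaDisc1} plus Cauchy--Schwarz, only reaches the $\sqrt{\lambda_1}$ scale of (\ref{CheegerEsti}) and cannot produce a bound of order $k\lambda_1$; and your ``telescoping estimate'' deriving case (ii) from ``all thresholds exceed $8k\lambda_1$'' is left unquantified precisely where the step-function approximation $g$, the function $F$, the constant $C=\beta^2\|f\|^2_{\mu}/(256k^3d^w_{\mu}\mathcal{R}^{\sigma}(f))$, and Claim~\ref{claim3} do the actual work and produce the constants $8k$ and $16\sqrt2\,k$. So the proposal correctly identifies the source argument but is missing its essential devices, and the annulus-restriction route as written would fail.
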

Notice that Theorem \ref{ImprChIntro} is a direct corollary of this theorem
and the fact that $0\leq \lambda_k(\Delta^{\sigma})\leq 2$.

Along the way to proving Theorem~\ref{ImprCh}, the following lemma will be crucial, which should be compared with Lemma \ref{lemma:basicbrick}.

\begin{Lem}\label{lemmaImprCheeger}
 For any non-zero function $f:V\rightarrow \mathbb{R}$, there exists a $t'\in [0, \max_{u\in V}|f(u)|]$ such that
\begin{equation}\label{preImprCheeger}
 \beta^{\sigma}(V_f(t'),V_f(-t'))\leq \frac{\sum_{u\sim v}w_{uv}|f(u)-\sigma(uv)f(v)|}{\sum_u\mu(u)|f(u)|}.
\end{equation}
\end{Lem}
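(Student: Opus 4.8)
The plan is to run the argument of Lemma~\ref{lemma:basicbrick} with the square-root threshold $\sqrt t$ replaced by the level $t$ itself, so that an $L^1$ coarea-type averaging produces a linear bound in place of a square-root one. Set $M:=\max_{u\in V}|f(u)|$ and, for $t\in[0,M]$, let $Y(t)\in\{-1,0,1\}^{V}$ be given by $Y_u(t)=1$ on $V_f(t)$, $Y_u(t)=-1$ on $V_f(-t)$, and $Y_u(t)=0$ otherwise. Exactly as in the computation preceding Lemma~\ref{lemma:basicbrick}, but with $t$ in place of $\sqrt t$, the quantity $\sum_{u\sim v}w_{uv}|Y_u(t)-\sigma(uv)Y_v(t)|$ equals the numerator of $\beta^{\sigma}(V_f(t),V_f(-t))$ and $\sum_{u\in V}\mu(u)|Y_u(t)|$ equals its denominator $\mathrm{vol}_\mu(V_f(t)\cup V_f(-t))$; so the ratio of these two sums is $\beta^{\sigma}(V_f(t),V_f(-t))$ whenever $V_f(t)\cup V_f(-t)\ne\emptyset$.

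The first key step is the edgewise estimate
\begin{equation*}
\int_0^M |Y_u(t)-\sigma(uv)Y_v(t)|\,dt\ \le\ |f(u)-\sigma(uv)f(v)|,\qquad \{u,v\}\in E,
\end{equation*}
the linear analogue of Lemma~\ref{lemmaDisc1}. As there, substituting $\sigma(uv)f(v)$ for $f(v)$ reduces it to proving $\int_0^M|Y_a(t)-Y_b(t)|\,dt\le|a-b|$ for all $a,b\in[-M,M]$, where $Y_a(t)$ is $1$ if $a\ge t$, $-1$ if $a\le -t$, and $0$ otherwise. I would split into the same two cases: if $a,b$ have the same sign (WLOG $|a|\ge|b|$), the integrand is $1$ on an interval of length $|a|-|b|$ and $0$ elsewhere; if they have opposite signs, it is $2$ on an interval of length $\min(|a|,|b|)$ and $1$ on an interval of length $|a|-|b|$. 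In either case the integral equals $|a-b|$, so the edgewise bound in fact holds with equality; one also records $\int_0^M|Y_u(t)|\,dt=|f(u)|$.

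The second key step is averaging. Write $P(t):=\sum_{u\sim v}w_{uv}|Y_u(t)-\sigma(uv)Y_v(t)|$ and $Q(t):=\sum_{u\in V}\mu(u)|Y_u(t)|$. Integrating the edgewise bound over all edges gives $\int_0^M P(t)\,dt\le \mathcal N$ with $\mathcal N:=\sum_{u\sim v}w_{uv}|f(u)-\sigma(uv)f(v)|$, while $\int_0^M Q(t)\,dt=\mathcal D:=\sum_{u\in V}\mu(u)|f(u)|>0$ since $f\not\equiv 0$. Because every vertex attaining $|f|=M$ keeps $Q(t)>0$ for all $t\in[0,M]$, from $\int_0^M\big(P(t)-\tfrac{\mathcal N}{\mathcal D}Q(t)\big)\,dt\le 0$ one gets some $t'\in[0,M]$ with $P(t')\le\tfrac{\mathcal N}{\mathcal D}Q(t')$, i.e. $P(t')/Q(t')\le\mathcal N/\mathcal D$. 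As $Q(t')>0$, the set $V_f(t')\cup V_f(-t')$ is a genuine (nonempty) sub-bipartition, and $P(t')/Q(t')=\beta^{\sigma}(V_f(t'),V_f(-t'))$ by the first paragraph, which yields \eqref{preImprCheeger}. The conventions at $t=0$ change only a null set and are irrelevant to all the integrals. I do not expect a real obstacle here; the one spot that needs attention is the edgewise case analysis, which — unlike Lemma~\ref{lemmaDisc1}, where one case has slack — here comes out as an exact equality in both cases.
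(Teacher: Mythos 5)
Your proposal is correct and follows essentially the same route as the paper: the same level sets $V_f(t)$, $V_f(-t)$ with the indicator-type vector $Y(t)$, the same edgewise case analysis giving $\int|Y_u-\sigma(uv)Y_v|\,dt=|f(u)-\sigma(uv)f(v)|$ (the paper also obtains equality in both cases), and the same $L^1$ averaging over $t$ to produce the threshold $t'$ (the paper merely normalizes $\max|f|=1$ and delegates the averaging to the argument of Lemma \ref{lemma:basicbrick}). Your explicit check that $Q(t')>0$, ensuring the sub-bipartition is nonempty, is a welcome detail the paper leaves implicit.
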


\begin{proof}
We can assume $\max_{u\in V}|f(u)|=1$ without loss of generality since the right hand side of (\ref{preImprCheeger}) in invariant under scaling of $f$. For $t\in [0,1]$, define a vector $X_f(t)\in \{-1,0,1\}^V$ by
\begin{equation}
 (X_f(t))_u=\left\{
       \begin{array}{rl}
         1, & \hbox{if $f(u)\geq t$;} \\
         -1, & \hbox{if $f(u)\leq -t$;} \\
         0, & \hbox{otherwise.}
       \end{array}
     \right.
\end{equation}
We claim that, for any $\{u,v\}\in E$,
\begin{equation}\label{claim1}
 \int_0^1|(X_f(t))_u-\sigma(uv)(X_f(t))_v|dt=|f(u)-\sigma(uv)f(v)|.
\end{equation}
Similarly to the proof of Lemma \ref{lemmaDisc1}, we only need to show that
\begin{equation}\label{eq:6.5}
  \int_0^1|(X_f(t))_u-(X_f(t))_v|dt=|f(u)-f(v)|
\end{equation}
for any function $f: V \to [-1,1]$.
Without loss of generality, suppose $|f(u)|\geq |f(v)|$. If $f(u)$ and $f(v)$ have different signs, then
\begin{equation}
 |(X_f(t))_u-(X_f(t))_v|=\left\{
                   \begin{array}{ll}
                     2, & \hbox{if $t\leq |f(v)|$;} \\
                     1, & \hbox{if $|f(v)|< t\leq|f(u)|$;} \\
                     0, & \hbox{if $t> |f(u)|$,}
                   \end{array}
                 \right.
\end{equation}
and so,
$$\int_0^1|(X_f(t))_u-(X_f(t))_v|\,dt=|f(u)|+|f(v)|=|f(u)-f(v)|.$$
If, on the other hand, $f(u)$ and $f(v)$ have the same sign,
\begin{equation}
|(X_f(t))_u-(X_f(t))_v|=\left\{
                   \begin{array}{ll}
                     0, & \hbox{if $t\leq |f(v)|$;} \\
                     1, & \hbox{if $|f(v)|< t\leq|f(u)|$;} \\
                     0, & \hbox{if $t> |f(u)|$,}
                   \end{array}
                 \right.
\end{equation}
and so,
$$\int_0^1|(X_f(t))_u-(X_f(t))_v|dt=|f(u)|-|f(v)|=|f(u)-f(v)|.$$
This completes the proof of (\ref{eq:6.5}) and establishes the claim (\ref{claim1}).

By the definition of the vector $X_f(t)$, we observe that
\begin{align*}
 \sum_{u\sim v}w_{uv}&|(X_f(t))_u-\sigma(uv)(X_f(t))_v|\\
 =\; &2|E^+(V_f(t), V_f(-t))|+|E^-(V_f(t))|+|E^-(V_f(-t))|\\&+|E(V_f(t)\cup V_f(-t), \overline{V_f(t)\cup V_f(-t)})|,
\end{align*}
and $$\sum_u\mu(u)|(X_f(t))_u|=\text{vol}_{\mu}(V_f(t)\cup V_f(-t)).$$
Therefore, there exists a $t'\in [0,1]$ such that
\begin{align*}
 \beta^{\sigma}(V_f(t'), V_f(-t'))\leq\frac{\int_0^1\sum_{u\sim v}w_{uv}|(X_f(t))_u-\sigma(uv)(X_f(t))_v|dt}{\int_0^1\sum_u\mu(u)|(X_f(t))_u|dt}.
\end{align*}
Applying the claim (\ref{claim1}) and the fact $\int_0^1|(X_f(t))_u|dt=|f(u)|$, the lemma is proved.
\end{proof}

For any non-zero function $f:V\rightarrow \mathbb{R}$, and for any $k \in \mathbb{N}$, let
\begin{equation}
 0=t_0\leq t_1\leq\cdots\leq t_{2k}
\end{equation}
be a sequence of real numbers with $t_{2k}=\max_{u\in V}|f(u)|$. We define a step function approximation $g$ to $f$ as
\begin{equation}\label{DefOFg}
 g(u)=\psi_{-t_{2k},\ldots, -t_1, 0, t_1, \ldots, t_{2k}}(f(u)):=\arg\min_{t\in\{-t_{2k},\ldots, 0, \ldots, t_{2k}\}}|f(u)-t|.
\end{equation}
In other words, $g: V \to \{-t_{2k},\ldots, 0, \ldots, t_{2k}\}$ is a function such that $g(u)$ equals the one of $\{-t_{2k},\ldots, 0, \ldots, t_{2k}\}$ which is closest to $f(u)$.

We further construct an auxiliary function $F: V\rightarrow \mathbb{R}$. First, we define a function $\eta: [-t_{2k}, t_{2k}]\rightarrow \mathbb{R}$ via
\begin{equation}
 \eta(x):=|x-\psi_{-t_{2k},\ldots, -t_1, 0, t_1, \ldots, t_{2k}}(x)|.
\end{equation}
Note that $\eta(-x)=\eta(x)$. Then for each $u\in V$, we assign
\begin{equation} \label{F}
 F(u):=\int_0^{f(u)}\eta(x) \, dx.
\end{equation}

\begin{Lem}\label{satzF}
The function $F$ defined in \eqref{F} has the following properties:
\begin{itemize}
  \item[(i)] For any $u\in V$,
\begin{equation}
 |F(u)|\geq \frac{1}{8k}|f(u)|^2.
\end{equation}
  \item[(ii)]For any $\{u,v\}\in E$,
\begin{align}
 &|F(u)-\sigma(uv)F(v)| \notag\\
 \leq & \frac{1}{2}|f(u)-\sigma(uv)f(v)|\times \notag\\
 &\hspace{.5cm}\Big(|f(u)-\sigma(uv)f(v)|+|f(u)-g(u)|+|f(v)-g(v)|\Big).\label{goalPro}
\end{align}
\end{itemize}
\end{Lem}

\begin{proof}
 (i). First observe that $F(u)$ and $f(u)$ share the same sign. Without loss of generality, assume $f(u)> 0$. Then the proof can be done as in \cite[Claim 3.3]{KLLOT2013}. For the readers' convenience, we recall it here. Suppose $f(u)\in [t_i, t_{i+1}]$ for some $i$. Then by the Cauchy-Schwarz inequality,
\begin{align*}
f(u)^2&=\left(\sum_{j=0}^{i=1}(t_{j+1}-t_j)+(f(u)-t_i)\right)^2\\
&\leq 2k\left(\sum_{j=0}^{i=1}(t_{j+1}-t_j)^2+(f(u)-t_i)^2\right).
\end{align*}
By the definition of $F$,
\begin{align*}
F(u)&=\sum_{j=0}^{i-1}\int_{t_j}^{t_{j+1}}\eta(x)\,dx+\int_{t_i}^{f(u)}\eta(x)\,dx\\
&\geq \sum_{j=0}^{i-1}\frac{1}{4}(t_{j+1}-t_j)^2+\frac{1}{4}(f(u)-t_i)^2=\frac{1}{8k}f(u)^2.
\end{align*}
(ii). Observe that $$\int_0^{\sigma(uv)f(v)}\eta(x) \, dx=\sigma(uv)\int_0^{f(v)}\eta(x) \, dx.$$  Hence, we only need to prove (\ref{goalPro}) for the case $\sigma(uv)=+1$.
Without loss of generality, suppose $|f(u)|\geq |f(v)|$. If $f(u)$ and $f(v)$ have different signs, say, $f(u)\geq 0$ and $f(v)\leq 0$, then, recalling the fact $\eta(-x)=\eta(x)$, we have
\begin{align}
 |F(u)-F(v)|&=\int_0^{f(u)}\eta(x)\,dx+\int_0^{-f(v)}\eta(x)\,dx\notag\\
& \leq \int_0^{f(u)}x\,dx+\int_0^{-f(v)}x\,dx\notag\\
&=\frac{1}{2}(f(u)^2+f(v)^2)\leq \frac{1}{2}|f(u)-f(v)|^2.\label{eq:1}
\end{align}
If, on the other hand, $f(u)$ and $f(v)$ have the same sign, we can assume $f(u)>0$ and $f(v)>0$ since $\eta(-x)=\eta(x)$ and the step function approximation of $-f$ is $-g$. Then,
\begin{align}\label{eq:2}
 |F(u)-F(v)|=\int_{f(v)}^{f(u)}\eta(x)\,dx\leq |f(u)-f(v)|\times \max_{f(v)\leq x\leq f(u)}\eta(x).
\end{align}
Using the definition of $\eta$, we estimate
\begin{align}
 \eta(x)&\leq\min\{|x-g(u)|,|x-g(v)|\}\leq \frac{1}{2}(|x-g(u)|+|x-g(v)|)\notag\\
&\leq \frac{1}{2}
\left(|x-f(u)|+|f(u)-g(u)|+|x-f(v)|+|f(v)-g(v)|\right)\notag\\
&=\frac{1}{2}
\left(|f(u)-f(v)|+|f(u)-g(u)|+|f(v)-g(v)|\right).\label{eq:3}
\end{align}
Combining (\ref{eq:1}), (\ref{eq:2}), and (\ref{eq:3}) leads to (\ref{goalPro}).
\end{proof}

With Lemma \ref{satzF} and Lemma \ref{lemmaImprCheeger} at hand, we derive the following lemma.
\begin{Lem}\label{preImprCheeger2}
 For any non-zero function $f: V\rightarrow \mathbb{R}$ and any step function approximation $g$ of $f$ constructed from $0=t_0\leq t_1, \ldots, t_{2k}$ as above, there exists a $t'\in [0, \max_{u\in V}|f(u)|]$ such that
\begin{equation}\label{CheegerToRayleighRefine}
 \beta^{\sigma}(V_f(t'), V_{-f}(t'))\leq 4k\mathcal{R}^{\sigma}(f)+4\sqrt{2}k\frac{\Vert f-g\Vert_{\mu}}{\Vert f\Vert_{\mu}}\sqrt{d_{\mu}^w\mathcal{R}^{\sigma}(f)},
\end{equation}where $\Vert f\Vert^2_{\mu}:=\sum_{u\in V}\mu(u)f(u)^2$.
\end{Lem}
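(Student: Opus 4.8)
The plan is to apply Lemma~\ref{lemmaImprCheeger} not to $f$ itself but to the auxiliary function $F$ from Proposition~\ref{satzF}, and then to push the conclusion back to $f$ using the two properties of $F$. Write $\widetilde{F}(x):=\int_0^x\eta(s)\,ds$, so that $F=\widetilde{F}\circ f$. Since $\eta\ge 0$ and $\eta(x)=0$ only at the finitely many grid points $\{-t_{2k},\dots,t_{2k}\}$, the map $\widetilde{F}$ is continuous, odd (as $\eta$ is even), non-decreasing, and vanishes only at $0$; in particular $F\not\equiv 0$ because $f\not\equiv 0$. First I would feed $F$ into Lemma~\ref{lemmaImprCheeger}, obtaining a threshold $t''\in[0,\max_{u}|F(u)|]$ with
\[
 \beta^{\sigma}\bigl(V_F(t''),V_F(-t'')\bigr)\le\frac{\sum_{u\sim v}w_{uv}|F(u)-\sigma(uv)F(v)|}{\sum_u\mu(u)|F(u)|}.
\]
Since $\widetilde{F}$ is continuous, non-decreasing and odd, the superlevel set $\{x:\widetilde{F}(x)\ge t''\}$ is of the form $[t',\infty)$ with $t':=\min\{x:\widetilde{F}(x)\ge t''\}$, and $t'\le\max_u|f(u)|$ because $t''\le\widetilde{F}(\max_u|f(u)|)$. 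Hence $V_F(t'')=V_f(t')$ and $V_F(-t'')=V_{-f}(t')$, so it remains only to bound the displayed right-hand side by the right-hand side of \eqref{CheegerToRayleighRefine}.

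For the denominator, Proposition~\ref{satzF}(i) gives $\sum_u\mu(u)|F(u)|\ge\frac1{8k}\sum_u\mu(u)f(u)^2=\frac1{8k}\Vert f\Vert_\mu^2$. For the numerator, Proposition~\ref{satzF}(ii) yields
\[
 \sum_{u\sim v}w_{uv}|F(u)-\sigma(uv)F(v)|\le\tfrac12\sum_{u\sim v}w_{uv}|f(u)-\sigma(uv)f(v)|^2+\tfrac12\sum_{u\sim v}w_{uv}|f(u)-\sigma(uv)f(v)|\bigl(|f(u)-g(u)|+|f(v)-g(v)|\bigr).
\]
The first sum equals $\tfrac12\mathcal{R}^{\sigma}(f)\Vert f\Vert_\mu^2$ by the definition of the signed Rayleigh quotient. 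To the second sum I would apply the Cauchy--Schwarz inequality, separating off $\sqrt{\sum_{u\sim v}w_{uv}|f(u)-\sigma(uv)f(v)|^2}=\sqrt{\mathcal{R}^{\sigma}(f)}\,\Vert f\Vert_\mu$, and then estimate the remaining factor by the computation \eqref{NorNonnor} applied with $f$ replaced by $f-g$ (so that $|f(u)-g(u)|$ plays the role of $|f(u)|$ there), which gives $\sum_{u\sim v}w_{uv}\bigl(|f(u)-g(u)|+|f(v)-g(v)|\bigr)^2\le 2d_{\mu}^{w}\Vert f-g\Vert_\mu^2$. Thus the numerator is at most $\tfrac12\mathcal{R}^{\sigma}(f)\Vert f\Vert_\mu^2+\tfrac{\sqrt2}{2}\Vert f\Vert_\mu\Vert f-g\Vert_\mu\sqrt{d_{\mu}^{w}\mathcal{R}^{\sigma}(f)}$, and dividing by $\tfrac1{8k}\Vert f\Vert_\mu^2$ gives exactly \eqref{CheegerToRayleighRefine}.

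No step here is genuinely hard once Proposition~\ref{satzF} is in hand; the two points requiring care are (a) the identification of the threshold sets of $F$ with those of $f$ through monotonicity and oddness of $\widetilde{F}$, which is what turns the output of Lemma~\ref{lemmaImprCheeger} for $F$ into a statement about threshold sets of $f$, and (b) the cross-term bound, where pairing Cauchy--Schwarz with the reuse of \eqref{NorNonnor} for $f-g$ is the one non-mechanical move. The factors $4k$ and $4\sqrt2\,k$ in the statement are precisely what $\tfrac1{8k}$ (from Proposition~\ref{satzF}(i)), $\tfrac12$ (from Proposition~\ref{satzF}(ii)) and $\sqrt2$ (from \eqref{NorNonnor}) combine to produce.
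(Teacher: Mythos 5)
Your proposal is correct and follows essentially the same route as the paper: apply Lemma~\ref{lemmaImprCheeger} to the auxiliary function $F$, bound the denominator by Proposition~\ref{satzF}(i), split the numerator via Proposition~\ref{satzF}(ii), and finish with Cauchy--Schwarz together with the estimate \eqref{NorNonnor} applied to $f-g$, with the constants $4k$ and $4\sqrt{2}k$ arising exactly as you describe. Your explicit identification of the threshold sets of $F$ with those of $f$ via the monotone odd map $\widetilde{F}$ is just a more detailed version of the paper's one-line remark that $f(u)\geq f(v)$ if and only if $F(u)\geq F(v)$.
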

We point out that the notation $\Vert\cdot\Vert_{\mu_1}=\Vert\cdot\Vert$ reduces to the Euclidean norm when $\mu=\mu_1$.
\begin{proof}
Applying Lemma \ref{lemmaImprCheeger} to the function $F$, we find $\overline{t}\in [0, \max_u|F(u)|]$ such that
\begin{align*}
 &\beta^{\sigma}(V_F(\overline{t}), V_F(-\overline{t}))\leq \frac{\sum_{u\sim v}w_{uv}|F(u)-\sigma(uv)F(v)|}{\sum_u\mu(u)|F(u)|}\\
\leq &4k\mathcal{R}^{\sigma}(f)\\
&+4k\frac{\sum_{u\sim v}w_{uv}|f(u)-\sigma(uv)f(v)|(|f(u)-g(u)|+|f(v)-g(v)|)}{\sum_u\mu(u)f(u)^2}\\[1ex]
\leq & 4k\mathcal{R}^{\sigma}(f)+4k\sqrt{\mathcal{R}^{\sigma}(f)}\frac{\sqrt{\sum_{u\sim v}w_{uv}|(|f(u)-g(u)|+|f(v)-g(v)|)^2}}{\sqrt{\sum_u\mu(u)f(u)^2}}.
\end{align*}
 In the above, we used Lemma~\ref{satzF} for the second inequality and the Cauchy-Schwarz inequality for the last one. Notice that
\begin{align}
&\sum_{u\sim v}w_{uv}|(|f(u)-g(u)|+|f(v)-g(v)|)^2\notag\\
\leq & \frac{1}{2}\sum_u\sum_{v,v\sim u}w_{uv}\left(2|f(u)-g(u)|^2+2|f(v)-g(v)|^2\right)\notag\\
\leq & 2d_{\mu}^w\Vert f-g\Vert_{\mu}^2.\label{NorNonnor2}
\end{align}
Inserting \eqref{NorNonnor2} into the above calculations we obtain
\begin{equation*}
 \beta^{\sigma}(V_F(\overline{t}), V_F(-\overline{t}))\leq 4k\mathcal{R}^{\sigma}(f)+4\sqrt{2}k\frac{\Vert f-g\Vert_{\mu}}{\Vert f\Vert_{\mu}}\sqrt{d_{\mu}^w\mathcal{R}^{\sigma}(f)}.
\end{equation*}
Noting that $f(u)\geq f(v)$ if and only if $F(u)\geq F(v)$ completes the proof.
\end{proof}

We are now ready to prove Theorem \ref{ImprCh}.

\begin{Lem}\label{lemma:oneoftwo}
For any non-zero function $f: V\rightarrow \mathbb{R}$ and any $1\leq k\leq n$, there exists $t'\in [0, \max_{u\in V}|f(u)|]$, such that at least one of the following estimates holds:
\begin{itemize}
  \item [(i)] $\beta^{\sigma}(V_f(t'), V_f(-t'))\leq 8k\mathcal{R}^{\sigma}(f)$;
  \item [(ii)] there exists $k$ disjointly supported functions $f_1, f_2, \ldots, f_k: V\rightarrow \mathbb{R}$ such that for each $1\leq i\leq k$,
\begin{equation}\label{eq:one of two}
 \mathcal{R}^{\sigma}(f_i)< 256d_{\mu}^wk^2\frac{\mathcal{R}^{\sigma}(f)^2}{\beta^{\sigma}(V_f(t'), V_f(-t'))^2}.
\end{equation}
\end{itemize}
\end{Lem}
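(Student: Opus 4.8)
The plan is to pick the $2k$ thresholds $0=t_0\le t_1\le\cdots\le t_{2k}=\max_{u\in V}|f(u)|$ that define the step function approximation $g$ in Lemma~\ref{preImprCheeger2}, and then to read off the $k$ functions of alternative~(ii) from the level bands those thresholds cut out. For $1\le j\le 2k$ set
\[
\psi_j(u):=\mathrm{sign}(f(u))\cdot\max\{0,\ \min(|f(u)|-t_{j-1},\ t_j-|f(u)|)\},
\]
the ``tent'' supported on the band $B_j:=\{u:\ t_{j-1}<|f(u)|<t_j\}$. I will first record three elementary facts: (a) $\psi_1,\dots,\psi_{2k}$ are disjointly supported, since the $B_j$ are pairwise disjoint; (b) $\sum_{j=1}^{2k}\Vert\psi_j\Vert_\mu^2=\Vert f-g\Vert_\mu^2$, because the distance from $f(u)$ to $\{0,\pm t_1,\dots,\pm t_{2k}\}$ is precisely $\psi_j(u)$ for the band $B_j$ containing $u$; and (c) for every edge $\{u,v\}$ and every $j$, $\Vert\psi_j(u)-\sigma(uv)\psi_j(v)\Vert\le\Vert f(u)-\sigma(uv)f(v)\Vert$, which follows from a short case analysis on the sign $\sigma(uv)\,\mathrm{sign}(f(u))\,\mathrm{sign}(f(v))$ together with the fact that each tent is $1$-Lipschitz and lies below the identity on $[0,\infty)$ (this is the same reduction device as at the beginning of the proof of Lemma~\ref{lemmaDisc1}). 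Since every edge meets at most two bands, (c) yields $\sum_{j=1}^{2k}\sum_{u\sim v}w_{uv}\Vert\psi_j(u)-\sigma(uv)\psi_j(v)\Vert^2\le 2\sum_{u\sim v}w_{uv}\Vert f(u)-\sigma(uv)f(v)\Vert^2$.

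Next I apply Lemma~\ref{preImprCheeger2} to this $g$: it produces $t'\in[0,\max_u|f(u)|]$ with
\[
\beta^{\sigma}(V_f(t'),V_f(-t'))\le 4k\,\mathcal{R}^{\sigma}(f)+4\sqrt{2}\,k\,\frac{\Vert f-g\Vert_\mu}{\Vert f\Vert_\mu}\sqrt{d_\mu^{w}\,\mathcal{R}^{\sigma}(f)}.
\]
Abbreviate $\beta:=\beta^{\sigma}(V_f(t'),V_f(-t'))$, $R:=\mathcal{R}^{\sigma}(f)$, $D:=d_\mu^{w}$. If $4kR\ge\tfrac12\beta$ then $\beta\le 8kR$ and alternative~(i) holds with this $t'$. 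Otherwise the second summand must exceed $\tfrac12\beta$, which rearranges to $\Vert f-g\Vert_\mu^2\ge\frac{\beta^2}{128\,k^2DR}\,\Vert f\Vert_\mu^2$, and it remains to build the test functions in this regime.

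In that regime I want the thresholds chosen — necessarily before Lemma~\ref{preImprCheeger2} is invoked, hence without reference to $\beta$ — by a greedy sweep that distributes the tent-mass among the $2k$ bands as evenly as possible; this is feasible because, with the left endpoint $a$ fixed, $\sum_{a<|f(u)|<b}\mu(u)\min(|f(u)|-a,b-|f(u)|)^2$ is continuous and nondecreasing in $b$. Writing $\tau$ for the (roughly) common tent-mass, fact~(b) gives $\tau\gtrsim\frac{1}{2k}\Vert f-g\Vert_\mu^2\ge\frac{\beta^2\Vert f\Vert_\mu^2}{256\,Dk^3R}$, so the numerator bound from (c) gives $\sum_{j=1}^{2k}\mathcal{R}^{\sigma}(\psi_j)\le\frac{2R\Vert f\Vert_\mu^2}{\tau}\lesssim\frac{Dk^3R^2}{\beta^2}$; averaging over $2k$ bands then leaves at least $k$ of the $\psi_j$ with $\mathcal{R}^{\sigma}(\psi_j)$ at most $256\,Dk^2R^2/\beta^2$ (a careful split of the various factors of $2$ produces the stated constant). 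Relabelling those $\ge k$ bands as $f_1,\dots,f_k$ gives disjointly supported functions realizing alternative~(ii) for the same $t'$; combined downstream with Lemma~\ref{lemmaPreliminary} this is exactly what forces the spectral gap in Theorem~\ref{ImprCh}.

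The genuinely delicate point is the last paragraph: designing a $\beta$-free greedy whose output equalizes the tent-masses tightly enough, and then tracking the numerical constants so that the unavoidable losses — the factor $2$ in fact~(c), and the gap between ``average'' and ``every'' in the counting — are absorbed into the slack of the constants $8k$ and $256\,Dk^2$. The main technical nuisance is an atom of $\mu$ concentrated at a single value of $|f|$, which can obstruct exact equalization; it is handled by simply placing a threshold at that value, so that the atom contributes nothing to $\Vert f-g\Vert_\mu$. Everything analytic — the auxiliary function $F$, Proposition~\ref{satzF}, and Lemma~\ref{preImprCheeger2} — is already available, so this step amounts to careful bookkeeping over the level sets of $f$.
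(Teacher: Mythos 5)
Your skeleton matches the paper's (step--function approximation $g$, the dichotomy from Lemma \ref{preImprCheeger2}, tent functions on the bands as the disjointly supported test functions, then an averaging argument), but the step you yourself flag as ``genuinely delicate'' is exactly where the argument is missing, and it is not a bookkeeping issue. You need every one of the $2k$ band masses to be at least $\tfrac{1}{2k}\Vert f-g\Vert_\mu^2$, i.e.\ at least the \emph{average}; for $2k$ interacting bands this is exact equalization of a non-additive interval functional $\tau(a,b)$ whose total $\Vert f-g\Vert_\mu^2$ itself moves with the cut points. Continuity and monotonicity of $\tau(a,\cdot)$ in the right endpoint (the one fact you invoke) gives no such simultaneous equalization, your ``greedy sweep'' has no specified target, and the atom discussion does not address the difficulty (it is also moot: since a vertex sitting at a threshold contributes zero tent mass, $\tau(a,b)$ is continuous in the endpoints regardless of atoms). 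The paper's device avoids this entirely: the common band mass $C$ is fixed \emph{in advance} (it can be written purely in terms of $f$, e.g.\ via the minimal threshold-cut value, so there is no circularity with Lemma \ref{preImprCheeger2}), the greedy (\ref{ProcedureGate}) is run with that target, and the dichotomy is ``the greedy reaches $\max|f|$ within $2k$ steps'' (then $\Vert f-g\Vert_\mu^2\le 2kC$ and alternative (i) follows) versus ``it fails'' (then every completed band has mass \emph{exactly} $C$, which is all that alternative (ii) needs). No equalization lemma is required, and this is the idea your proposal lacks.

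Second, even granting perfect equalization, your fact (c) loses a factor $2$ (``every edge meets at most two bands''), and that factor cannot be absorbed: with alternative (i) pinned at $8k\mathcal{R}^{\sigma}(f)$ the split parameter must be at least $1/2$, and then your chain gives at best $512\,d_\mu^w k^2\,\mathcal{R}^{\sigma}(f)^2/\beta^2$ in (ii), not the stated $256$. The paper's Claim \ref{claim3} is precisely the sharpening needed: for every edge, $\sum_{i}|f_i(u)-\sigma(uv)f_i(v)|^2\le|f(u)-\sigma(uv)f(v)|^2$ with no factor $2$, because in the cross-band case the two tent values are distances to a common separating threshold, so their squares already sum below $|f(u)-\sigma(uv)f(v)|^2$ (after the sign-reduction trick of Lemma \ref{lemmaDisc1}). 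So to complete your route you must (a) either prove an equalization statement or switch to the paper's fixed-target greedy, and (b) replace your per-band estimate by the combined per-edge estimate of Claim \ref{claim3}.
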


\begin{proof}
Let $M:=\max_{u}|f(u)|$. We construct $2k+1$ real numbers
$t_0\leq t_1\leq \cdots \leq t_{2k}\leq M$ as follows: Set $t_0=0$. Suppose that we have already fixed $t_0, t_1,\ldots, t_{i-1}$. Now we look for $t_i\in [t_{i-1}, M]$ such that
\begin{align}
 \sum_{u: -t_i\leq f(u)<-t_{i-1}}&\mu(u) \, |f(u)-\psi_{-t_i,-t_{i-1}}(f(u))|^2\notag\\
&+ \sum_{u: t_{i-1}< f(u)\leq t_{i}} \mu(u) \, |f(u)-\psi_{t_{i-1},t_i}(f(u))|^2=C,\label{ProcedureGate}
\end{align}
where
$$C=\frac{\beta^{\sigma}(V_f(t'), V_f(-t'))^2\Vert f\Vert_{\mu}^2}{256k^3d_{\mu}^w\mathcal{R}^{\sigma}(f)}.$$
Notice that the left hand side of (\ref{ProcedureGate}) is continuous and non-decreasing with respect to\ $t_i$. We also point out again that
$\psi_{t_{i-1},t_i}(f(u))$ is the closest one of $\{t_{i-1},t_i\}$ to $f(u)$.
If we can find such constants satisfying (\ref{ProcedureGate}), we set the smallest one of them to be $t_i$; otherwise, we set $t_i=M$. This procedure is considered to be successful if $t_{2k}=M$.

If the procedure succeeds, we define a step function $g$ as in (\ref{DefOFg}). Then by definition,
\begin{equation}
 \Vert f-g\Vert_{\mu}^2\leq 2k  C.
\end{equation}
Applying Lemma \ref{preImprCheeger2}, we arrive at the inequality
\begin{equation*}
\beta^{\sigma}(V_f(t'), V_f(-t'))\leq 4k\mathcal{R}^{\sigma}(f)+\frac{\beta^{\sigma}(V_f(t'), V_f(-t'))}{2}.
\end{equation*}
Hence, the estimate (i) holds.

If, on the other hand, the procedure fails, that is, if we have $t_{2k}<M$, then we define $2k$ disjointly supported functions as
\begin{equation}
 f_i(u):=\left\{
          \begin{array}{cl}
            -|f(u)-\psi_{-t_{i-1},t_i}(f(u))|, & \hbox{if $-t_i\leq f(u)\leq -t_{i-1}$;} \\
            |f(u)-\psi_{t_{i-1}, t_i}(f(v))| & \hbox{if $t_{i-1}< f(u)\leq t_i$;} \\
            0 & \hbox{otherwise,}
          \end{array}
        \right.
\end{equation}
for $1\leq i\leq k$.
Recall that (\ref{ProcedureGate}) ensures $\Vert f_i\Vert^2=C$. Next we estimate the Rayleigh quotients of these functions.

\begin{Claim}\label{claim3}
 For any $\{u,v\}\in E$,
\begin{equation}
 \sum_{i=1}^{2k}|f_i(u)-\sigma(uv)f_i(v)|^2\leq |f(u)-\sigma(uv)f(v)|^2.
\end{equation}
\end{Claim}
Similarly to the proof of Lemma \ref{lemmaDisc1}, we only need to prove the claim when $\sigma(uv)=+1$.
\begin{description}
  \item[Case 1]  $u$ and $v$ lie in the support of the same function $f_i$. In this case,
$$\sum_{i=1}^{2k}|f_i(u)-f_i(v)|^2=|f_i(u)-f_i(v)|.$$
If $f(u)$ and $f(v)$ have the same sign, say $f(u)\geq 0$ and $f(v)\geq 0$, then we estimate
\begin{align*}
|f_i(u)-f_i(v)|^2&=\left||f(u)-\psi_{t_{i-1},t_i}(f(u))|-|f(v)-\psi_{t_{i-1},t_i}(f(v))|\right|^2\\
&\leq |f(u)-f(v)|^2.
\end{align*}
If $f(u)$ and $f(v)$ have different signs, say $f(u)>0$ and $f(v)<0$, then
\begin{align*}
|f_i(u)-f_i(v)|^2&=\left||f(u)-\psi_{t_{i-1},t_i}(f(u))|+|f(v)-\psi_{-t_{i},-t_{i-1}}(f(v))|\right|^2\\
&\leq \left||f(u)-\psi_{t_{i-1},t_i}(f(u))|+|-f(v)-\psi_{t_{i-1},t_{i}}(-f(v))|\right|^2\\
&\leq (|f(u)-t_{i-1}|+|-f(v)-t_{i-1}|)^2\leq |f(u)-f(v)|^2.
\end{align*}

\item [Case 2] $u\in\text{supp}(f_i), v\in \text{supp}(f_j)$, where $i\neq j$. We can assume $j>i$.
Then,
$$\sum_{i=1}^{2k}|f_i(u)-f_i(v)|^2=|f_i(u)|^2+|f_j(v)|^2.$$
If $f(u)$ and $f(v)$ have the same sign, say $f(u)\geq 0$ and $f(v)\geq 0$, then
\begin{align*}
 |f_i(u)|^2+|f_j(v)|^2&=|f(u)-\psi_{t_{i-1},t_i}(f(u))|^2+|f(v)-\psi_{t_{j-1},t_j}(f(v))|^2\\
&\leq |f(u)-t_i|^2+|f(v)-t_i|^2\leq |f(u)-f(v)|^2.
\end{align*}
If $f(u)$ and $f(v)$ have different signs, say $f(u)\geq 0$ and $f(v)\leq 0$, then we estimate
\begin{align*}
 |f_i(u)|^2+|f_j(v)|^2&=|f(u)-\psi_{t_{i-1},t_i}(f(u))|^2+|f(v)-\psi_{-t_{j},-t_{j-1}}(f(v))|^2\\
&=|f(u)-\psi_{t_{i-1},t_i}(f(u))|^2+|-f(v)-\psi_{t_{j-1},t_{j}}(-f(v))|^2\\
&\leq |f(u)-t_{i-1}|^2+|-f(v)-t_{j-1}|^2\leq |f(u)-f(v)|^2.
\end{align*}
\end{description}
This completes the proof of Claim.

Using the Claim, we calculate
\begin{align*}
 \sum_{i=1}^{2k}\mathcal{R}^{\sigma}(f_i)\sum_{i=1}^{2k}\mathcal{R}^{\sigma}(f_i)&=\frac{1}{C}\sum_{i=1}^{2k}\sum_{u\sim v}w_{uv}(f_i(u)-\sigma(uv)f_i(v))^2\\
&\leq \frac{1}{C}\sum_{u\sim v}w_{uv}|f(u)-\sigma(uv)f(v)|^2\\
&=256k^3d_{\mu}^w\frac{\mathcal{R}^{\sigma}(f)^2}{\beta^{\sigma}(V_f(t'), V_f(-t'))^2}.
\end{align*}
Let us abbreviate the above estimate as $ \sum_{i=1}^{2k}\mathcal{R}^{\sigma}(f_i)\leq \mathcal{C}k^3$, where
$$\mathcal{C}:=\frac{256d_{\mu}^w\mathcal{R}^\sigma(f)^2}{\beta^\sigma(V_f(t'),V_{f}(-t))^2}.$$
Then we can find $k$ functions from the set $\{f_1, f_2, \ldots, f_{2k}\}$, relabeling them as $f_1, f_2, \ldots, f_k$ if necessary, such that $\mathcal{R}^{\sigma}(f_i)< \mathcal{C}k^2$
 for $1\leq i\leq k$. This is true since otherwise there exist at least $k+1$ of $\{\mathcal{R}^\sigma(f_i): 1\leq i\leq n\}$ with the property $\mathcal{R}^\sigma(f)\geq\mathcal{C}k^2$ which leads to the contradiction that $\sum_{i=1}^{2k}\mathcal{R}^\sigma(f)\geq \mathcal{C}k^2(k+1)\geq \mathcal{C}k^3$.
That is, the estimate (ii) holds.
\end{proof}

\begin{proof}[Proof of Theorem \ref{ImprCh}]
Let $\phi_1$ be the eigenfunction of $\Delta^\sigma$ corresponding to $\lambda_1(\Delta^\sigma)$. We have $\lambda_1(\Delta^\sigma)=\mathcal{R}^\sigma(\phi_1)$. Moreover, by the definition (\ref{defSCheeger}) of $h_1^\sigma(\mu_d)$,
$$h_1^\sigma(\mu_d)\leq \beta^{\sigma}(V_{\phi_1}(t'), V_{\phi_1}(-t')), \,\,\,\text{for any}\,\,t'\in \left[0,\max_{u\in V}\phi_1(u)\right].$$
Combining the above two observations with Lemma \ref{lemma:oneoftwo}, we obtain the following property of $\phi_1$: For any $1\leq k\leq n$, at least one of the following two estimates holds:
\begin{equation*}
\text{(i) } h_1^{\sigma}(\mu_d)\leq 8k\lambda_1(\Delta^{\sigma}); \quad
\text{(ii) } \frac{\lambda_k(\Delta^\sigma)}{2}<256k^2\frac{\lambda_1(\Delta^\sigma)^2}{h_1^\sigma(\mu_d)^2}.
\end{equation*}
Note that in (ii), we have used Lemma \ref{lemmaPreliminary} and the fact that $d_{\mu_d}^w=1$ for the degree measure $\mu_d$. Now rearranging the estimate in (ii) and taking the square root of both sides, Theorem \ref{ImprCh} is proved.
\end{proof}

We further have the following higher-order estimates.
\begin{Thm}\label{ImprChIntroHiger}
There exists an absolute constant $C$ such that for any signed graph  $\Gamma=(G,\sigma)$ and any $1\leq k\leq l\leq n$,
\begin{equation}
 h_k^{\sigma}(\mu_d)<Clk^6\frac{\lambda_k(\Delta^{\sigma})}{\sqrt{\lambda_l(\Delta^{\sigma})}}.
\end{equation}
\end{Thm}
This generalizes the corresponding results for unsigned graphs given in \cite{KLLOT2013} and \cite{Liu13}.

\begin{proof}[Proof of Theorem \ref{ImprChIntroHiger}]
Combining Lemma \ref{lemma:localReyleighquotient} and the fact (\ref{eq:lambdakReyleigh}), we obtain the following property: For any $k \in \{1,2,\dots, n\}$, there exist $k$ disjointly supported functions $\psi_1, \psi_2, \ldots, \psi_k: V\rightarrow \mathbb{R}$ such that
\begin{equation}\label{eq:123}
\mathcal{R}^{\sigma}(\psi_i)\leq Ck^6\lambda_k(\Delta^\sigma),\,\,\,\text{for each}\,\, 1\leq i\leq k,
\end{equation}
where $C$ is an absolute constant.

Lemma \ref{lemma:oneoftwo} tells the following fact: For any $1\leq i\leq k$ and any $k\leq l\leq n$, there exists a $t_{i,l}\in [0,\max_{u\in V}|\psi_i(u)|]$, such that at least one of the following two estimates holds:
\begin{itemize}
  \item [(i)] $\beta^{\sigma}(V_{\psi_i}(t_{i,l}), V_{\psi_i}(-t_{i,l}))\leq 8l\mathcal{R}^{\sigma}(\psi_i)$;
  \item [(ii)]$\frac{\lambda_l(\Delta^\sigma)}{2}<256l^2\frac{\mathcal{R}^\sigma(\psi_i)^2}{\beta^{\sigma}(V_{\psi_i}(t_{i,l}), V_{\psi_i}(-t_{i,l}))^2}$.
\end{itemize}
Note that in (ii), we have used Lemma \ref{lemmaPreliminary} and the fact that $d_{\mu_d}^w=1$.

Let $i_0\in \{1,\ldots, k\}$ be the index satisfying
$$\beta^{\sigma}(V_{\psi_{i_0}}(t_{i_0,l}), V_{\psi_{i_0}}(-t_{i_0,l}))=\max_{1\leq i\leq k}\beta^{\sigma}(V_{\psi_i}(t_{i,l}), V_{\psi_i}(-t_{i,l})).$$
By the definition (\ref{eq:signedCheeger}) of $h_k^\sigma(\mu_d)$, we obtain
\begin{equation}\label{eq:345}
h_k^\sigma(\mu_d)\leq \beta^{\sigma}(V_{\psi_{i_0}}(t_{i_0,l}), V_{\psi_{i_0}}(-t_{i_0,l})).
\end{equation}
Inserting (\ref{eq:123}) and (\ref{eq:345}) into the estimates (i) and (ii), we obtain the following fact: For any $1\leq k\leq l\leq n$, at least one of the two estimates holds:
\begin{equation*}
\text{(i') } h_k^\sigma(\mu_d)\leq 8Clk^6\lambda_k(\Delta^\sigma); \quad
\text{(ii') } \frac{\lambda_l(\Delta^\sigma)}{2}<256l^2\frac{(Ck^6\lambda_k(\Delta^\sigma))^2}{h_k^\sigma(\mu_d)^2}.
\end{equation*}
Rearranging the estimate in (ii') and taking the square root of both sides, we obtain
$$h_k^{\sigma}(\mu_d)< 16\sqrt{2}Clk^{6}\frac{\lambda_k(\Delta^{\sigma})}{\sqrt{\lambda_l(\Delta^{\sigma})}}.$$
Using the fact $\lambda_l(\Delta^\sigma)\leq 2$, (i') implies
$$h_k^\sigma(\mu_d)\leq 8\sqrt{2}Clk^6\frac{\lambda_k(\Delta^\sigma)}{\sqrt{\lambda_l(\Delta^\sigma)}}.$$
This completes the proof.
\end{proof}
\begin{Rmk}
Theorems \ref{ImprChIntro} and \ref{ImprChIntroHiger} suggest that the well-known eigengap heuristic \cite{Luxburg07,KLLOT2013} for the traditional spectral clustering algorithm still holds for signed networks.
That is, in case that $\lambda_k(\Delta^{\sigma})$ is small and $\lambda_{k+1}(\Delta^{\sigma})$ is large, it is better to cluster the data into $k$ almost-balanced subgraphs.
\end{Rmk}

By setting $\mu=\mu_1$, we obtain the following results for $L^{\sigma}$.

\begin{Thm}\label{ImprChNonnormalized}
Given any signed graph $\Gamma=(G,\sigma)$ and any $1\leq k\leq n$, at least one of the following holds:
\begin{equation}
\text{(i). }h^{\sigma}_1(\mu_1)\leq 8k\lambda_1(L^{\sigma});\,\,\,\,\,\text{(ii). }h_1^{\sigma}(\mu_1)< 16\sqrt{2d_{\max}}k\frac{\lambda_1(L^{\sigma})}{\sqrt{\lambda_k(L^{\sigma})}}.
\end{equation}
\end{Thm}
Recalling that $\lambda_k(L^{\sigma})\leq 2d_{\max}$, we further obtain the following corollaries.

\begin{Cor}\label{CorImprChNonnormalized}
 For any signed graph $\Gamma$ and any $1\leq k\leq n$,
\begin{equation}
 h^{\sigma}_1(\mu_1)<16\sqrt{2d_{max}}k\frac{\lambda_1(L^{\sigma})}{\sqrt{\lambda_k(L^{\sigma})}}.
\end{equation}
\end{Cor}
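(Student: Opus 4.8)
The plan is to obtain Corollary~\ref{CorImprChNonnormalized} as an immediate consequence of the dichotomy already proved in Theorem~\ref{ImprChNonnormalized}, the only extra ingredient being the crude spectral bound $\lambda_k(L^{\sigma})\le 2d_{\max}$. I would first record this bound: for the non-normalized signed Laplacian the relevant Rayleigh quotient (i.e.\ $\mathcal{R}^{\sigma}$ taken with respect to $\mu=\mu_1$, so that $d_{\mu_1}^w=d_{\max}$) satisfies, for every nonzero $f$, the estimate $\mathcal{R}^{\sigma}(f)\le 2d_{\max}$, because $(f(u)-\sigma(uv)f(v))^2\le 2f(u)^2+2f(v)^2$ and summing this edgewise gives $\sum_{u\sim v}w_{uv}(f(u)-\sigma(uv)f(v))^2\le 2\sum_u d_u f(u)^2\le 2d_{\max}\sum_u f(u)^2$; by the min-max principle \eqref{keigenvalue} this forces $\lambda_k(L^{\sigma})\le 2d_{\max}$ for all $k$. (This is the same computation that appears in \eqref{NorNonnor}.)

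Next I would invoke Theorem~\ref{ImprChNonnormalized}: at least one of (i) $h_1^{\sigma}(\mu_1)\le 8k\,\lambda_1(L^{\sigma})$ or (ii) $h_1^{\sigma}(\mu_1)<16\sqrt{2d_{\max}}\,k\,\lambda_1(L^{\sigma})/\sqrt{\lambda_k(L^{\sigma})}$ holds. In case (ii) the assertion of the corollary is literally what we have. In case (i) I would multiply the right-hand side $8k\,\lambda_1(L^{\sigma})$ by the factor $\sqrt{2d_{\max}}/\sqrt{\lambda_k(L^{\sigma})}$, which is $\ge 1$ precisely because $\lambda_k(L^{\sigma})\le 2d_{\max}$, to get
\[
h_1^{\sigma}(\mu_1)\le 8k\,\lambda_1(L^{\sigma})\le 8\sqrt{2d_{\max}}\,k\,\frac{\lambda_1(L^{\sigma})}{\sqrt{\lambda_k(L^{\sigma})}}<16\sqrt{2d_{\max}}\,k\,\frac{\lambda_1(L^{\sigma})}{\sqrt{\lambda_k(L^{\sigma})}},
\]
which is the claimed bound.

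Since essentially all the work is done in Theorem~\ref{ImprChNonnormalized}, I do not expect a genuine obstacle; the only point deserving a remark is the handling of degenerate cases, which is the step I would flag. If $\lambda_k(L^{\sigma})=0$ the right-hand side is read as $+\infty$ and the inequality is vacuous; if $\lambda_1(L^{\sigma})=0$ (equivalently $\Gamma$ has a balanced connected component) then Theorem~\ref{thmCheegerNonnormalized} gives $h_1^{\sigma}(\mu_1)=0$ as well, so the strict inequality should be understood as the trivial identity of zeros, or this case simply excluded. Apart from this bookkeeping the proof is one line. The higher-order statement Corollary~\ref{CorImprChNonHiger} would be derived in exactly the same way, absorbing the first alternative of the higher-order dichotomy into the second via $\lambda_l(L^{\sigma})\le 2d_{\max}$.
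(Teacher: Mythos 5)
Your proposal is correct and follows exactly the paper's route: the paper obtains this corollary from Theorem~\ref{ImprChNonnormalized} precisely by recalling $\lambda_k(L^{\sigma})\leq 2d_{\max}$ and absorbing alternative (i) of the dichotomy into alternative (ii), just as you do (and the same remark about degenerate cases with $\lambda_1(L^{\sigma})=0$ applies equally to the paper's one-line derivation).
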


\begin{Cor}\label{CorImprChNonHiger}
 There exists an absolute constant $C$ such that for any signed graph $\Gamma$ and $1\leq k\leq l\leq n$,
\begin{equation}
 h^{\sigma}_k(\mu_1)<C\sqrt{d_{max}}lk^6\frac{\lambda_k(L^{\sigma})}{\sqrt{\lambda_l(L^{\sigma})}}.
\end{equation}
\end{Cor}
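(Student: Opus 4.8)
\emph{Approach.} The plan is to rerun the proof of Theorem~\ref{ImprChIntroHiger} with the measure $\mu=\mu_1$ in place of $\mu_d$. The only place the choice of measure enters quantitatively is through the quantity $d_\mu^w=\max_u\{(\sum_{v\sim u}w_{uv})/\mu(u)\}$, and $d_{\mu_1}^w=\max_u d_u=d_{\max}$; this is precisely what produces the $\sqrt{d_{\max}}$ in the statement. Every ingredient needed --- Lemmas~\ref{lemmaImprCheeger}, \ref{preImprCheeger2}, \ref{lemma:oneoftwo}, \ref{lemma:localReyleighquotient}, and \ref{lemmaPreliminary} --- is already formulated for an arbitrary $\mu$ (equivalently, for $L^\sigma$ when $\mu=\mu_1$), so there is no new analytic content; the work is assembling them and then cleaning up constants using $\lambda_l(L^\sigma)\le 2d_{\max}$.

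\emph{Steps.} First I would take an orthonormal (with respect to $(\cdot,\cdot)_{\mu_1}$) system of eigenfunctions $\phi_1,\dots,\phi_k$ for $\lambda_1(L^\sigma),\dots,\lambda_k(L^\sigma)$, set $\Phi=(\phi_1,\dots,\phi_k):V\to\mathbb{R}^k$, and observe as in \eqref{eq:lambdakReyleigh} that $\mathcal{R}^\sigma(\Phi)\le\lambda_k(L^\sigma)$. Applying Lemma~\ref{lemma:localReyleighquotient} with $\mu=\mu_1$ gives $k$ disjointly supported functions $\psi_1,\dots,\psi_k:V\to\mathbb{R}$ with $\mathcal{R}^\sigma(\psi_i)\le Ck^6\,\mathcal{R}^\sigma(\Phi)\le Ck^6\lambda_k(L^\sigma)$ for each $i$. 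Next, for each $i$ I would apply Lemma~\ref{lemma:oneoftwo} to $\psi_i$, with $\mu=\mu_1$ and with the integer parameter there taken to be $l$: this yields a threshold $t_i'$ (which may be taken strictly positive, since the governing averaging inequality holds on a set of thresholds of positive measure) and, writing $b_i:=\beta^\sigma(V_{\psi_i}(t_i'),V_{\psi_i}(-t_i'))$, the alternative that either $b_i\le 8l\,\mathcal{R}^\sigma(\psi_i)$, or there are $l$ disjointly supported functions each of signed Rayleigh quotient $<256\,d_{\max}\,l^2\,\mathcal{R}^\sigma(\psi_i)^2/b_i^2$. In the second case, Lemma~\ref{lemmaPreliminary} applied to $L^\sigma$ gives $\lambda_l(L^\sigma)<512\,d_{\max}\,l^2\,\mathcal{R}^\sigma(\psi_i)^2/b_i^2$, i.e.\ $b_i<l\sqrt{512\,d_{\max}/\lambda_l(L^\sigma)}\;\mathcal{R}^\sigma(\psi_i)$. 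Thus in either case $b_i\le l\,\mathcal{R}^\sigma(\psi_i)\,\max\{8,\sqrt{512\,d_{\max}/\lambda_l(L^\sigma)}\}\le Clk^6\lambda_k(L^\sigma)\,\max\{8,\sqrt{512\,d_{\max}/\lambda_l(L^\sigma)}\}$.

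\emph{Conclusion.} Because the $\psi_i$ are disjointly supported and $t_i'>0$, the threshold sets $V_{\psi_i}(\pm t_i')$ sit inside the disjoint sets $\mathrm{supp}(\psi_i)$, so $\{(V_{\psi_i}(t_i'),V_{\psi_i}(-t_i'))\}_{i=1}^k\in\mathrm{Pair}(k)$ and hence $h_k^\sigma(\mu_1)\le\max_i b_i$. This already gives the dichotomy ``$h_k^\sigma(\mu_1)\le 8Clk^6\lambda_k(L^\sigma)$ \emph{or} $h_k^\sigma(\mu_1)\le\sqrt{512\,d_{\max}}\,Clk^6\lambda_k(L^\sigma)/\sqrt{\lambda_l(L^\sigma)}$'', i.e.\ the level-$k$ analogue of Theorem~\ref{ImprChNonnormalized}. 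Finally I would use $\lambda_l(L^\sigma)\le 2d_{\max}$ to rewrite the first alternative as $h_k^\sigma(\mu_1)\le 8\sqrt{2d_{\max}}\,Clk^6\lambda_k(L^\sigma)/\sqrt{\lambda_l(L^\sigma)}$; since $8\sqrt2<16\sqrt2=\sqrt{512}$, both alternatives are subsumed in $h_k^\sigma(\mu_1)<16\sqrt2\,C\sqrt{d_{\max}}\,lk^6\,\lambda_k(L^\sigma)/\sqrt{\lambda_l(L^\sigma)}$, which is the claim with absolute constant $16\sqrt2\,C$.

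\emph{Where the difficulty lies.} I expect no genuine obstacle: this is essentially a line-by-line transcription of the $\mu_d$-argument with $d_{\max}$ playing the role of the harmless constant $d_{\mu_d}^w=1$. The only things to watch are bookkeeping: (a) pinning down $d_{\mu_1}^w=d_{\max}$ and noticing it is invoked exactly once, inside Lemma~\ref{lemma:oneoftwo}; (b) arranging $t_i'>0$ so that the $k$ threshold-set pairs are genuinely mutually disjoint sub-bipartitions (the convention $V_f(0)=\{f\ge0\}$ would otherwise spill outside $\mathrm{supp}(\psi_i)$); and (c) the degenerate case $\lambda_l(L^\sigma)=0$, which forces $l$ balanced components, hence $\lambda_k(L^\sigma)=0$ and $h_k^\sigma(\mu_1)=0$, so that there the inequality is read in the limiting sense --- exactly as for the corresponding normalized statements.
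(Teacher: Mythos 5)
Your proposal is correct and follows essentially the same route as the paper: rerun the $\mu_d$-argument with $\mu=\mu_1$ (so $d_{\mu}^w=d_{\max}$ enters through Lemma \ref{lemma:oneoftwo}), combine Lemma \ref{lemma:localReyleighquotient}, Lemma \ref{lemma:oneoftwo} and Lemma \ref{lemmaPreliminary} to get the dichotomy of Theorem \ref{ImprChNonnormalized} at level $k$, and then absorb the first alternative using $\lambda_l(L^{\sigma})\leq 2d_{\max}$. Your extra care about taking $t_i'>0$ so the threshold pairs stay inside the disjoint supports, and about the degenerate case $\lambda_l(L^{\sigma})=0$, are sensible refinements of points the paper leaves implicit.
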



 We comment at this point that the previous estimates about $h_k^\sigma(\mu_d)$ and $\lambda_k(\Delta^\sigma)$ can be directly translated into estimates for $\widetilde{h}_k^{\sigma}(\mu_d)$ and $2-\lambda_{n-k+1}(\Delta^{\sigma})$ by duality. This is due to Lemma \ref{lemma:dual}, which says that $$2-\lambda_{n-k+1}(\Delta^{\sigma})=\lambda_k(\Delta^{-\sigma}).$$ For example, the dual version of Theorem \ref{thmCheegerEsti} can be stated as follows.

\begin{Thm}\label{thmCheegerEstiDual}
Given a signed graph $\Gamma=(G, \sigma)$, we have
\begin{equation}\label{CheegerEstiDual}
\frac{2-\lambda_n(\Delta^{\sigma})}{2}\leq \widetilde{h}^{\sigma}_1(\mu_d)\leq \sqrt{2(2-\lambda_n(\Delta^{\sigma}))}.
\end{equation}
\end{Thm}

We omit the dual versions of Theorems~\ref{HigerCheeger}, \ref{ImprChIntro} and \ref{ImprChIntroHiger} here. Actually, these results are good demonstrations of a general antithetical duality principle discussed by Harary \cite{Harary57}.

\section{Signed triangles and the spectral gaps $\lambda_1$ and $2-\lambda_n$}
In this section, we prove an estimate for $\lambda_1(\Delta^\sigma)$ and $2-\lambda_n(\Delta^\sigma)$ in terms of the number of signed triangles. We will also discuss a similar result for the non-normalized Laplace matrix $L^\sigma$.


We first introduce some notation. For a given edge $\{u,v\}$, we divide the neighborhood $N_u:=\{u'| u'\sim u\}$ of a vertex $u$ into disjoint parts as
$$N_u=N_u^1\cup N^+_{uv} \cup N^-_{uv},$$
where
\begin{align*}
N_u^1&:=\{u'|u'\sim u, u'\not\sim v\},\\
N^+_{uv}&:=\{u'|u'\sim u, u'\sim v, \sigma(uv)\sigma(vu')\sigma(u'u)=+1\},\\
\text{and}\,\,\quad \quad \quad N^-_{uv}&:=\{u'|u'\sim u, u'\sim v, \sigma(uv)\sigma(vu')\sigma(u'u)=-1\}.
\end{align*}
Similarly, we have the partition $N_v=N_v^1\cup N^+_{uv} \cup N^-_{uv}$. Let us denote $$N_{uv}=N^+_{uv} \cup N^-_{uv},$$
and denote the number of positive and negative triangles including an edge $\{u,v\}$ by $\sharp^+(u,v)$ and $\sharp^-(u,v)$, respectively, where
$$\sharp^{+}(u,v):=\sum_{u'\in N^+_{uv}}1\quad \text{and}\quad\sharp^{-}(u,v):=\sum_{u'\in N^-_{uv}}1.$$

Note that the quantities $\sharp^+(u,v), \sharp^-(u,v)$ are switching invariant and their unsigned counterpart has an interesting close relation with the Ollivier-Ricci curvature of the underlying graph $G$ \cite{BJL,JostLiu14}. We prove the following theorem.

\begin{Thm}\label{thmBJLS} For a signed graph $\Gamma=(G,\sigma)$, \begin{equation}
\frac{w^2}{W}\frac{\min_{u\sim v}\sharp^-(u,v)}{\max_u d_u}\leq\lambda_1(\Delta^{\sigma})\leq\cdots\leq\lambda_N(\Delta^{\sigma})\leq 2-\frac{w^2}{W}\frac{\min_{u\sim v}\sharp^+(u,v)}{\max_u d_u},
\end{equation}
where $w=\min_{u\sim v}w_{uv}$ and $W=\max_{u\sim v}w_{uv}$.
\end{Thm}

This result is obtained by considering the iterated matrix $\Delta^{\sigma}[2]$ (see (\ref{iteratedmatrix}) below), extending an idea of Bauer, Jost and the second named author \cite{BJL} for the unsigned case.

\begin{proof}[Proof of Theorem \ref{thmBJLS}]
We consider an iterated matrix
\begin{equation}\label{iteratedmatrix}
\Delta^{\sigma}[2]=I-(D^{-1}A^{\sigma})^2.
\end{equation}
Then, for any function $f: V\rightarrow \mathbb{R}$ and any $u\in V$,
$$\Delta^{\sigma}[2]f(u)=f(u)-\frac{1}{d_u}\sum_v\,\sum_{u'\in N_{uv}}\frac{w_{u'u}w_{u'v}}{d_{u'}}\sigma(u'u)\sigma(u'v)f(v).$$
Let $f_n$ be the corresponding eigenfunction of $\lambda_n(\Delta^{\sigma})$. Then,
\begin{align}\label{fractionEigen}
\frac{(f_n,\Delta^{\sigma}[2]f_n)_{\mu}}{(f_n, \Delta^{\sigma} f_n)_{\mu}}=\frac{(f_n,[1-(1-\lambda_n(\Delta^{\sigma}))^2]f_n)_{\mu}}{(f_n, \lambda_n(\Delta^{\sigma})f_n)_{\mu}}=2-\lambda_n(\Delta^{\sigma}).
\end{align}
Note $\lambda_n(\Delta^{\sigma})(f_n,f_n)_{\mu}\neq 0$, hence the above expression is proper. Furthermore,
\begin{equation}\label{63}
(f_n, \Delta^{\sigma} f_n)_{\mu}=\sum_{u\sim v}(f_n(u)-\sigma(uv)f_n(v))^2,
\end{equation}
and
\begin{align*}
&(f_n, \Delta^{\sigma}[2]f_n)_{\mu}\\
=&\sum_u f_n(u)\sum_v\,\sum_{u'\in N_{uv}}\frac{w_{u'u}w_{u'v}}{d_{u'}}(f_n(u)-\sigma(u'u)\sigma(u'v)f_n(v))\\
=&\sum_{(u,v)}\,\sum_{u'\in N_{uv}}\frac{w_{u'u}w_{u'v}}{d_{u'}}(f_n(u)-\sigma(u'u)\sigma(u'v)f_n(v))^2\\
\geq &\sum_{u\sim v}\,\sum_{u'\in N^+_{uv}}\frac{w_{u'u}w_{u'v}}{d_{u'}}(f_n(u)-\sigma(u'u)\sigma(u'v)f_n(v))^2.
\end{align*}
In the above, $\sum_{(u,v)}$ stands for the summation over unordered pair of vertices $u,v$.
Inserting the above estimate and the equality (\ref{63}) into (\ref{fractionEigen}), we obtain
\begin{align*}
2-\lambda_n(\Delta^{\sigma})&\geq \frac{\sum_{u\sim v}\sum_{u'\in N^+_{uv}}\frac{w_{u'u}w_{u'v}}{d_{u'}}(f_n(u)-\sigma(u'u)\sigma(u'v)f_n(v))^2}{\sum_{u\sim v}w_{uv}(f_n(u)-\sigma(uv)f_n(v))^2}\\
&\geq \frac{w^2}{W}\min_{u\sim v}\sum_{u'\in N_{uv}^+}\frac{1}{d_{u'}}\geq \frac{w^2}{W} \frac{\min_{u\sim v}\sharp^+(u,v)}{\max_u d_u}.
\end{align*}
Using Lemma \ref{lemma:dual}, the lower bound estimate for $\lambda_1(\Delta^{\sigma})$ follows from duality.
\end{proof}

For the signed non-normalized Laplace matrix $L^{\sigma}$, we have the following estimate.

\begin{Thm}\label{DasS} For a signed unweighted graph $\Gamma=(G,\sigma)$,
\begin{equation}\lambda_N(L^{\sigma})\leq \max_{u\sim v}\{d_u+d_v-\sharp^+(u,v)\}.\end{equation}
\end{Thm}

This result
improves the estimate $\lambda_N(L^{\sigma})\leq \max_{u\sim v}\{d_u+d_v\}$ due to Hou, Li, and Pan \cite{HouLiPan03}.
In fact, Theorem \ref{DasS} answers the question asked in their paper \cite[remark after Theorem 3.5]{HouLiPan03}.

%
The techniques we used in the proof of Theorem \ref{thmBJLS} do not work for $L^{\sigma}$.
For example, we do not have a clear relation between the eigenvalues of $D-A^{\sigma}$ and $D-(A^{\sigma})^2$ as in (\ref{fractionEigen}) anymore. Actually, the underlying idea of the proof of Theorem \ref{thmBJLS} is that the normalized operator encodes certain random process on the graph, which is not true for the non-normalized operator. We will employ different ideas, which are adapted from Das \cite{Das} and Rojo \cite{Rojo}. In fact, we shall prove the following result.

\begin{Thm}\label{weightedDas}
For a signed graph $\Gamma=(G,\sigma)$,
\begin{align}
\lambda_n(L^{\sigma})\leq \frac{1}{2}\max_{u\sim v}\{&d_u+d_v+\sum_{u'\in N_u^1}w_{u'u}+\sum_{u'\in N_v^1}w_{u'v}\notag\\
&+\sum_{u'\in N_{uv}^-}(w_{u'u}+w_{u'v})
+\sum_{u'\in N_{uv}^+}|w_{u'u}-w_{u'v}|\}.\notag
\end{align}
\end{Thm}

Observe that Theorem \ref{DasS} is a direct corollary of this theorem, since when $\Gamma$ is a signed unweighted graph, we have
$d_u=|N_u^1|+|N_{uv}^+|+|N_{uv}^-|$.

\begin{proof}[Proof of Theorem~\ref{weightedDas}]
Let $f_n$ be the eigenfunction corresponding to $\lambda_n(L^{\sigma})$. Without loss of generality, suppose $f_n(u)=\max_{u'\in V}|f_n(u')|$, and
$$\sigma(uv)f_n(v)=\min_{u'\in N_u}\sigma(uu')f_n(u').$$
First, observe that $f_n(u)-\sigma(uv)f_n(v)\neq 0$. (Because otherwise we would have $\sigma(uu')f_n(u')=f_n(u)$ for any $u'\in N_u$,
which would imply $\lambda_n(L^{\sigma})f_n(u)=L^{\sigma}f_n(u)=0$,  a contradiction.)
Then we calculate
\begin{align*}
&\lambda_n(L^{\sigma})(f_n(u)-\sigma(uv)f_n(v))\\
& = L^{\sigma}f_n(u)-\sigma(uv)L^{\sigma}f_n(v)\\
& = d_{u}f_n(u)-\sum_{u'\in N_u}w_{u'u}\sigma(uu')f_n(u')\\
&-d_{v}\sigma(uv)f_n(v)
+\sum_{u'\in N_v}w_{u'v}\sigma(uv)\sigma(u'v)f_n(u').
\end{align*}
%
Using the facts that $\sigma(uu')f_n(u')\geq \sigma(uv)f_n(v)$ for any $u'\in N_u$ and $\sigma(uv)\sigma(u'v)\leq 1$ for any $u'\in N_v$, we continue to estimate:
\begin{align*}
&\lambda_n(L^{\sigma})(f_n(u)-\sigma(uv)f_n(v))\\
& \leq  d_{u}f_n(u)-\sigma(uv)f_n(v)\sum_{u'\in N_u^1\cup N_{uv}^-}w_{u'u}-d_{v}\sigma(uv)f_n(v)\\
&+f_n(u)\sum_{u'\in N_v^1\cup N_{uv}^-}
w_{u'v}+\sum_{u'\in N_{uv}^+}(w_{u'v}-w_{u'u})\sigma(u'u)f_n(u')\\
& = \frac{1}{2}\left(f_n(u)-\sigma(uv)f_n(v)\right)\left(d_u+d_v+\sum_{u'\in N_u^1\cup N_{uv}^-}w_{u'u}+\sum_{u'\in N_v^1\cup N_{uv}^-}w_{u'v}\right)
\\
&+\frac{1}{2}(f_n(u)+\sigma(uv)f_n(v))\left(d_u-\sum_{u'\in N_u^1\cup N_{uv}^-}w_{u'u}\right)
\\& -\frac{1}{2}(f_n(u)+\sigma(uv)f_n(v))\left(d_v-\sum_{u'\in N_v^1\cup N_{u,v}^-}w_{u'v}\right)\\
& +\sum_{u'\in N_{uv}^+}(w_{u'v}-w_{u'u})\sigma(u'u)f_n(u').
\end{align*}
Using the fact that $d_u-\sum_{u'\in N_u^1\cup N_{uv}^-}w_{u'u}=\sum_{u'\in N_{uv}^+}w_{u'u}$, we have
\begin{align}
&\lambda_n(L^{\sigma})(f_n(u)-\sigma(uv)f_n(v))\notag\\
\leq &\frac{1}{2}(f_n(u)-\sigma(uv)f_n(v))\left(d_u+d_v+\sum_{u'\in N_u^1\cup N_{uv}^-}w_{u'u}+\sum_{u'\in N_v^1\cup N_{uv}^-}w_{u'v}\right)\notag\\
&+\frac{1}{2}\sum_{u'\in N_{uv}^+}(w_{u'v}-w_{u'u})\left(f_n(u)+\sigma(uv)f_n(v)-2\sigma(u'u)f_n(u')\right).\label{finalkey}
\end{align}
For the latter term above, we further estimate
\begin{align*}
&\sum_{u'\in N_{uv}^+}(w_{u'v}-w_{u'u})\left(f_n(u)+\sigma(uv)f_n(v)-2\sigma(u'u)f_n(u')\right)\\
\leq &\sum_{u'\in N_{uv}^+}|w_{u'v}-w_{u'u}|(f_n(u)-\sigma(u'u)f_n(u'))\\
&+\sum_{u'\in N_{uv}^+}|w_{u'v}-w_{u'u}|(\sigma(u'u)f_n(u')-\sigma(uv)f_n(v))\\
=&(f_n(u)-\sigma(uv)f_n(v))\sum_{u'\in N_{uv}^+}|w_{u'v}-w_{u'u}|.
\end{align*}
Inserting the above estimate into (\ref{finalkey}), we arrive at
\begin{align*}
&\lambda_n(L^{\sigma})(f_n(u)-\sigma(uv)f_n(v))\\
\leq &\frac{1}{2}(f_n(u)-\sigma(uv)f_n(v))\left(d_u+d_v+\sum_{u'\in N_u^1\cup N_{uv}^-}w_{u'u}+\sum_{u'\in N_v^1\cup N_{uv}^-}w_{u'v}\right)\\
&+\frac{1}{2}(f_n(u)-\sigma(uv)f_n(v))\sum_{u'\in N_{uv}^+}|w_{u'v}-w_{u'u}|.
\end{align*}
This completes the proof.
\end{proof}

\section*{Acknowledgements}
The authors are very grateful to Norbert Peyerimhoff for suggesting the crucial idea leading to Proposition \ref{prop:switching invariant}. The authors thank Thomas Zaslavsky for his interest and valuable comments. The main part of this work was conceived when both authors were visiting the \emph{Zentrum f\"{u}r interdisziplin\"{a}re Forschung (ZiF)} of Bielefeld University. The authors thank the hospitality of ZiF and the financial support from the ZiF cooperation group {``Discrete and Continuous Models in the Theory of Networks"}. Finally, the authors acknowledge many useful comments of the anonymous referees.
SL was partially supported by the EPSRC Grant EP/K016687/1.
FMA acknowledges  the support of European Union's Seventh Framework
Programme (FP7/2007-2013) under grant agreement no.~318723 (MatheMACS).

\end{document}